\documentclass[12pt,a4paper]{article}%
\usepackage{amssymb}
\usepackage{amsmath}
\usepackage{amsfonts}
\usepackage[dvips]{graphicx}
\usepackage{mathrsfs}
\usepackage[pdftex]{hyperref}
\usepackage{pdfpages}%
\usepackage[all]{xy}
\setcounter{MaxMatrixCols}{30}
\providecommand{\U}[1]{\protect\rule{.1in}{.1in}}
\hypersetup{
	colorlinks=true,
	citecolor = blue
}
\newtheorem{theorem}{Theorem}

\newtheorem{corollary}[theorem]{Corollary}

\newtheorem{definition}[theorem]{Definition}
\newtheorem{example}[theorem]{Example}

\newtheorem{lemma}[theorem]{Lemma}

\newtheorem{proposition}[theorem]{Proposition}
\newtheorem{remark}[theorem]{Remark}

\newenvironment{proof}[1][Proof]{\noindent\textbf{#1.} }{\ \rule{0.5em}{0.5em}}
\def\hc{\mathbb{H}_{\mathbb{C}}}
\def\sc{\operatorname{Sc}}
\def\imag{\operatorname{i}}
\def\dive{\operatorname{div}}
\def\rot{\operatorname{curl}}
\def\grad{\operatorname{grad}}
\textwidth 165mm
\textheight 230.7mm
\oddsidemargin=0mm
\evensidemargin=0mm
\addtolength{\topmargin}{-1in}
\begin{document}
	
	\author{V\'{\i}ctor A. Vicente-Ben\'{\i}{}tez\\{\small Instituto de Matemáticas de la U.N.A.M. Campus Juriquilla}\\{\small Boulevard Juriquilla 3001, Juriquilla, Querétaro C.P. 076230 M\'{e}xico }\\{\small  va.vicentebenitez@im.unam.mx } }
	\title{Bergman spaces and reproducing kernel for the biquaternionic Vekua equation}
	\date{}
	\maketitle
	\begin{abstract}
		We analyze the main properties of the Bergman spaces of weak $L_p$- solutions for a biquaternionic Vekua equation of the form 
		\[
		\mathbf{D}w(x)-\mathbf{Q}_Aw(x)=0
		\]
		on bounded domains of $\mathbb{R}^3$, where the operator $\mathbf{Q}_A$ involves quaternionic conjugation and  multiplications, both left and right, by essentially bounded functions. Properties such as completeness, separability, and reflexivity are shown. It is demonstrated that the solutions belonging to the Bergman spaces are locally H\"older continuous and that the evaluation maps are bounded in the $L_p$-norm. Consequently, for the case $p=2$, we obtain a reproducing integral kernel and an explicit formula for the orthogonal projection onto the Bergman space. For $1<p<\infty$, the explicit form for the annihilator of the Bergman space in the dual $L_{p'}$ is presented, along with an orthogonal decomposition for $L_2$.
	\end{abstract}

    \textbf{Keywords: } Biquaternionic Vekua equation, Bergman spaces, Bergman kernel, quaternionic analysis. \newline
    
    \textbf{MSC Classification:} 30G20; 30G35; 30H20; 46E22.	
    
	\section{Introduction}
	The aim of this paper is to develop and analyze the main properties of the Bergman space of $L_p$-weak solutions of the biquaternionic Vekua equation
	\begin{equation}\label{eq:vekuaintro}
		\mathbf{D}w(x)-w(x)a_1(x)+\overline{w(x)}a_2(x)+a_3(x)w(x)+a_4(x)\overline{w(x)}=0, \quad x\in G,
    \end{equation}	
   where $w$ and $a_j$, $j=1,2,3,4$, are biquaternionic-valued functions, $G\subset \mathbb{R}^3$ is a bounded domain with a Liapunov boundary, and $\mathbf{D}$ is the Moisil-Theodorescu operator (also called Cauchy-Riemann or Dirac operator). The notation $\overline{w}$ stands for the quaternionic conjugate of $w$. We assume that $a_j$, $j=1,2,3,4$, are essentially bounded on $G$.
   
   Biquaternionic equations of the form \eqref{eq:vekuaintro} arise in several elliptic systems of mathematical physics, such as the Dirac and Maxwell systems \cite{castillokrav,kravfather,kravmaxwell}, div-curl systems \cite{delgadodivcurl}, the study of Beltrami fields \cite{kravbeltrami,kravpablo}, and the factorization of Helmholtz and Schr\"odinger equations \cite{berstein,kravchenkoricatti,kravquaternionic}. This type of equation was treated first in the complex case by I. Vekua \cite{vekua0} and L. Bers \cite{bers} (who developed the theory of {\it pseudoanalytic functions}, called by Vekua {\it Generalized analytic functions}). The theory of pseudoanalytic functions and $(F,G)$-derivatives (in the sense of Bers) was extended to quaternionic-valued functions in \cite{malonek,malonek2}. In \cite{kravshapiro,shapirokrav}, the theory of fundamental solutions and integral representations was developed for the case when $a_1$ is a quaternionic constant and $a_2=a_3=a_4=0$. Eq. \eqref{eq:vekuaintro} is called a {\it main Vekua equation} when, for a fixed $j\in \{1,2,3,4\}$,  $a_j=\frac{\nabla f}{f}$ and $a_k=0$ for $j\neq k$, where $f$ is a $C^1$ function that does not vanish in the whole domain (the terminology extends the complex case given in \cite[Ch. 3]{pseudoanalyticvlad}). The cases for $j\in \{1,4\}$ are associated with the factorization of the equations $\operatorname{div}f^2\nabla \left(\frac{u}{f}\right)=0$ and $-\bigtriangleup u +\frac{\bigtriangleup f}{f}u=0$, which have been studied in works such as \cite{berstein,kravtrem,kravfather,delgadodivcurl}, focusing on classical and $W^{1,2}$-solutions. In \cite{delgadohardy}, a generalization of Hardy spaces for the main Vekua equation in the case $j=4$ was studied. The analysis of fundamental solutions for the case $j=3$ was presented in \cite{sprossing,sprossig2}.
   
   In this work, we analyze the main properties of the weak $L_p$- solutions (in the sense of Vekua \cite{gurlebeckquater,vekua0}) of \eqref{eq:vekuaintro}, such as completeness, separability, and reflexivity. In the first part, we investigate the regularity of the solutions to establish a result concerning the boundedness in the $L_p$-norm of the evaluation map $w\mapsto w(x)$, where $x\in G$. Consequently, for the case $p=2$, we prove the existence of integral kernels $K_x^k(t)$ satisfying $w_k(x)=\langle K_x^k,w\rangle_{L_2}$. Unlike the complex case, where the use of the similarity principle is crucial in proving the existence of the reproducing kernel \cite{camposbergman}, there is no result of this nature in the biquaternionic case. Recently, in \cite{delgadohodge}, the existence of reproducing integral kernels for an equation with $a_1=a_2=0$  involving $Cl_{0,n}$-valued functions was established. However, this result holds under the hypothesis that $\|a_3\|_{L_{\infty}}+\|a_4\|_{L_{\infty}}<\frac{1}{M}$, where $M$ is the operator-norm of the Theodorescu transform (defined in Subsection \ref{Subsmonogenicint}).  Generalizing the ideas presented in \cite{mineVekua2} for the bicomplex Vekua equation, we study the regularity of the solutions of \eqref{eq:vekuaintro} and establish their H\"older continuity on every ball contained within the domain $G$. Employing a Calderon-Zygmun type inequality, along with the integral operator $\mathbf{S}_G^A=\mathbf{I}+\mathbf{T}_G\mathbf{Q}_A$, where $\mathbf{T}_G$ is the Theodorescu transform and $\mathbf{Q}_A=wa_1+\overline{w}a_2+a_3w+a_4\overline{w}$ (this kind of operator was used in other works such as \cite{camposbergman,delgadohodge,mineVekua2}), and the Sobolev embedding theorems, we prove the boundedness of the evaluation map $w\mapsto w(x)$ in the $L_p$-norm. This result is proven without assuming more than the functions $a_j$, $j=1,2,3,4$, are essentially bounded on $G$. As a consequence, for $p=2$, we obtain an integral reproducing kernel for each component function $w_k(x)$ and a representation for the orthogonal projection of $L_2(G;\hc)$ onto the the Bergman space.
   
   The second part of this work is dedicated to establishing the annihilator of the Bergman space for $1<p<\infty$. We compute the transpose (in the sense of \cite{brezis}) of the operator $\mathbf{D}-\mathbf{Q}_A$, which is given by $\mathbf{D}-\mathbf{Q}_{A^{\star}}$, where $\mathbf{Q}_{A^{\star}}w=w\overline{a_1}+\overline{w}a_4+\overline{a_3}w+a_2\overline{w}$. The domain in $L_{p'}(G;\hc)$ of the transpose is the Sobolev space $W_0^{1,p'}(G;\hc)$. From this result, we obtain that the annihilator of the Bergman space is the closure in $L_{p'}$ of $(\mathbf{D}-\mathbf{Q}_{A^{\star}})W_0^{1,p'}(G;\hc)$. In particular, this yields an orthogonal decomposition for the space $L_2(G;\hc)$. This type of decomposition is useful for solving boundary value problems associated with equations that can be factorized in terms of operators of the type $\mathbf{D}-\mathbf{Q}_A$ (see \cite{gurlebeckquater,gurlebeckclifford} for examples in the case when $a_1$ is a constant and $a_2=a_3=a_4=0$). Knowledge of the annihilator is also useful for the problem of establishing the dual of the Bergman space \cite{cofman,hedenmal}.
   
   The paper is structured as follows. Section 2 presents the basic properties of the algebra of biquaternions, monogenic functions and integral operators. Section 3  establishes the main properties of the Bergman spaces of generalized solutions of Eq. \eqref{eq:vekuaintro}. Section 4 focuses on the regularity of the solutions and demonstrates the boundedness of the evaluation map with respect to the $L_p$-norm. Section 5 is devoted to the construction of the Bergman reproducing kernel and orthogonal projection. Section 6 derives the explicit expression of the annihilator of the Bergman space, provides an orthogonal decomposition of the space $L_2(G;\hc)$, and discusses some examples.
	\section{Background on quaternionic analysis}
	This section provides a summary of the main properties of the algebra of biquaternions and biquaternionic-valued holomorphic functions on $\mathbb{R}^3$, as discussed in \cite{gurlebeckquater,kravquaternionic}. We use the notation $\mathbb{N}_0=\mathbb{N}\cup \{0\}$. Given Banach spaces $X$ and $Y$, $\mathcal{B}(X,Y)$ denotes the Banach space of bounded linear operators and $\mathcal{K}(X,Y)$ the subspace of compact operators. When $X=Y$, we denote $\mathcal{B}(X)=\mathcal{B}(X,X)$ and $\mathcal{K}(X)=\mathcal{K}(X,Y)$. Additionally, $\mathcal{G}(X)$ denotes the group of invertible operators. 
	\subsection{Quaternions and quaternionic-valued functions}
	The algebra of the biquaternions (or complex quaternions) is denoted by $\hc$. The quaternionic units are denoted by $e_k$, $k=0,1,2, 3$, and satisfy the relations $e_0e_j=e_je_0=e_j$ and $e_je_k+e_ke_j=2\delta_{j,k} e_0$ for $j,k=1,2,3$.  Additionally, we have the relation with the complex imaginary unit: $\imag e_k=e_k\imag$, $k=0,1,2,3$ . We identify $e_0$ with $1$, and a biquaternion $p\in \hc$ is written as $p=p_0+\sum_{k=1}^{3}p_ke_k$, where $\{p_k\}_{k=0}^{3}\subset \mathbb{C}$. The complex number $p_0$ is called the {\it scalar part} of $p$ and is denoted by $\sc p$. The {\it vector part} of $p$ is $\vec{p}:=\sum_{k=1}^{3}p_ke_k$ (sometimes denoted by $\operatorname{Vec}p$). We identify $\vec{p}$ with a vector in $\mathbb{C}^3$. Hence $p=p_0+\vec{p}$, and the product of two quaternions $p$ and $q$ has the form
	\[
	pq=p_0q_0+p_0\vec{q}+q_0\vec{p}-\vec{p}\cdot \vec{q}+\vec{p}\times \vec{q},
	\]
	where $\vec{p}\cdot \vec{q}=\sum_{k=1}^{3}p_kq_k$ and $\vec{p}\times \vec{q}$ is the cross product of $\vec{p}$ with $\vec{q}$. The algebra $\hc$ is non-commutative and contains zero divisors \cite[Sec. 2.2]{kravquaternionic}. Given $p\in \hc$, its quaternionic conjugate is defined as $\overline{p}:=p_0-\vec{p}$. For $p\in \hc$, define $p^{\dagger}:= p_0^*-\vec{p}^*=p_0-\sum_{k=1}^3p_k^*e_k$, 
	where $^{*}$ denotes the standard complex conjugation. The operation $p\mapsto p^{\dagger}$ is an involution in $\hc$ and 
	\[
	\langle p,q\rangle _{\hc}:=\sc (p^{\dagger}q)=\sum_{k=0}^{3}p_k^*q_k \quad  \mbox{ and }\quad  |p|_{\hc}:=\sqrt{\langle p,p\rangle_{\hc} }
	\]
	are an inner product (conjugate-linear in the first slot) and a norm in $\hc$. For $p,q\in \hc$ we have the relation \cite[Lemma 2 of Sec. 2.2]{kravquaternionic}
	\begin{equation}\label{normproduct}
		|pq|_{\hc}\leqslant \sqrt{2}|p|_{\hc}|q|_{\hc}.
	\end{equation}
	Let $G\subset \mathbb{R}^3$ be a bounded domain. We consider functions of the form $u:G\rightarrow \hc$, $u=u_0+\sum_{k=1}^{3}u_ke_k$, where $u_k:G\rightarrow \mathbb{C}$, $k=0,1,2,3$. Let $\mathscr{F}(G)$ be a linear space of complex-valued functions (e.g., $C^k(G), L_p(G), W^{k,p}(G)$). We say that a biquaternionic-valued function $u$ belongs to the biquaternionic space $\mathscr{F}(G;\hc)$ if $u_k\in \mathscr{F}(G)$, $k=0,1,2,3$. For $1\leqslant p<\infty$, $L_p(G;\hc)$ is equipped with the norm $\|u\|_{L_p(G; \hc)}:=\left(\int_{\Omega}|u(x)|^p_{\hc}dV_x\right)^{\frac{1}{p}}$, and for $p=\infty$, $\|u\|_{L_{\infty}(G;\hc)}:= \operatorname{ess sup}\limits_{x\in G}|u(x)|_{\hc}$.  In particular, for the case $p=2$, $L_2(G; \hc)$ is a complex Hilbert space with the inner product $\langle u,v\rangle_{L_2(G;\hc)}:=\int_G \langle u(x),v(x)\rangle_{\hc}dV_x$. 
	
	Since $L_p(G; \hc)$ can be regarded as the product space $(L_p(G))^4$, and for $1\leqslant p<\infty$$, L_p(G)$ is separable \cite[Sec. 4.3]{brezis},  $L_p(G;\hc)$ is also separable. In particular, $L_2(G;\hc)$ is a separable complex Hilbert space. By the same argument, since $L_p(G)$ is reflexive for $1<p<\infty$ \cite[Th. 4.10]{brezis}, $L_p(G;\hc)$ is also reflexive.  The Sobolev spaces $W^{k,p}(G; \hc)$, with $k\in \mathbb{N}$ and $1\leqslant p\leqslant \infty$, the local spaces $L_{p,loc}(G; \hc)$, $W^{k,p}_{loc}(G; \hc)$, and $W_0^{k,p}(G; \hc)$ are defined in the usual way and equipped with corresponding norms. All these spaces are right $\hc$-modules. 
	
	The notation $\Omega\Subset G$ means that $\Omega$ is an open subset of $G$ with $\overline{\Omega}\subset G$. We say that $u:G\rightarrow \hc$ is {\it b-locally H\"older} in $G$ with H\"older exponent $0<\epsilon\leqslant 1$, if $u|_B\in C^{0,\epsilon}(\overline{B};\hc)$ for every ball $B\Subset G$, and we denote $u\in C_{bloc}^{0,\epsilon}(G;\hc)$.
	
	\begin{remark}\label{rem:sobolevembeddings}
	According to  \cite[Cor. 9.14]{brezis} and \cite[Th. 5. from Sec. 5.6 and Th. 4 from Sec. 5.8]{evans}, when $G$ is of class $C^1$, the following embeddings are compact: $W^{1,p}(G;\hc)\hookrightarrow L_q(G;\hc)$ for $1\leqslant p<3$ and $1\leqslant q<p^*:=\frac{3p}{3-p}$; $W^{1,3}(G;\hc)\hookrightarrow L_q(G;\hc)$ for $3\leqslant q<\infty$; and $W^{1,p}(G;\hc) \hookrightarrow C^{0,1-\frac{3}{p}}(\overline{G};\hc)$ for $3<p\leqslant \infty$. For a general open set $G$, we have the local embeddings: $W^{1,p}_{loc}(G;\hc)\hookrightarrow L_{q,loc}(G;\hc)$ for $1\leqslant p<3$ and $1\leqslant q<p^*$; $W^{1,3}_{loc}(G;\hc)\hookrightarrow L_{q,loc}(G;\hc)$ for $3\leqslant q<\infty$; and $W^{1,p}_{loc}(G;\hc) \hookrightarrow C^{0,1-\frac{3}{p}}_{bloc}(\overline{G};\hc)$ for $3<p\leqslant \infty$
	\end{remark}
	
    \subsection{Monogenic functions and integral operators}\label{Subsmonogenicint}
	The {\it Moisil-Theodorescu} operator (also called {\it Cauchy-Riemann} or {\it Dirac} operator) is defined as
	\[
	\mathbf{D}:= e_1\frac{\partial}{\partial x_1}+e_2\frac{\partial}{\partial x_2}+e_3\frac{\partial}{\partial x_3}.
	\]
	Its action, on the left and on the right, over a function $u\in C^1(G;\hc)$ is given by
	\begin{equation}\label{eq:actionofdiracop}
		\mathbf{D}u=-\dive \vec{u}+\grad u_0+\rot \vec{u} \quad \mbox{and }\quad u\mathbf{D}= -\dive \vec{u}+\grad u_0-\rot \vec{u}.
	\end{equation}
The following relation holds:
\begin{equation}\label{eq:conjugateD}
	\overline{\mathbf{D}u}=-\overline{u}\mathbf{D}, \quad \forall u\in C^1(G;\hc).
\end{equation}
These relations can be extended for $u\in W^{1,p}(G;\hc)$. A function $u\in C^1(G;\hc)$ is called {\it (left) monogenic} (regular or hyperholomorphic) if $\mathbf{D}u=0$. The set of all monogenic functions on $G$ is denoted by $\mathfrak{M}(G;\hc)$. From the identity \cite[Remark 1.2.6]{gurlebeckquater} $\bigtriangleup_3u=-\mathbf{D}^2u$ for $u\in C^2(G;\hc)$, we have that $\mathfrak{M}(G;\hc)\subset \operatorname{Har}(G;\hc) \subset C^{\infty}(G;\hc)$. In particular, $\{u_k\}_{k=0}^{3}\subset \operatorname{Har}(G)$. We recall the quaternionic Leibniz rule \cite[Th. 1.3.2]{gurlebeckquater}:
\begin{equation}\label{eq:leibnizrule}
	\mathbf{D}(uv)=(\mathbf{D}u)v+\overline{u}(\mathbf{D}v)-2\sum_{k=1}^{3}u_k\frac{\partial v}{\partial x_k}, \quad \forall u,v\in C^1(G;\hc).
\end{equation}
From \eqref{eq:leibnizrule}, $\mathfrak{M}(G,\hc)$ is a right $\hc$-module. Let $\Gamma=\partial G$ and suppose that $G$ and $\mathbb{R}^3\setminus \overline{G}$ are both connected. The {\it Cauchy kernel} is defined as
\begin{equation}\label{eq:Cauchykernel}
	E(x):=-\frac{\vec{x}}{4\pi |\vec{x}|^3}, \quad x\in \mathbb{R}^3\setminus \{0\}.
\end{equation}
The {\it Theodorescu operator} is given by \cite[Sec. 7.2]{gurlebeckhabetaspro}
\begin{equation}\label{eq:theodorescutransform}
	\mathbf{T}_{G}u(x):= -\int_G E(y-x)u(y)dV_y, \quad x\in G.
\end{equation}
Suppose that $\Gamma$ is a Liapunov surface, which implies that its exterior unit normal $\hat{\nu}(y)$ satisfies a H\"older condition over $\Gamma$ \cite[Sec. 27]{vladimirov}. In this case, the existence of the trace spaces $W ^{1-\frac{1}{p},p}(\Gamma; \hc)$ and the density of $C^{\infty}(\overline{G};\hc)$ in $W^{1,p}(G;\hc)$ for $1<p<\infty$, are guaranteed \cite[Th. 3.29 and 3.37]{maclean}. Consider $\hat{\nu}(y)$ as a purely vectorial quaternion. Gauss's theorem can be reformulated in terms of $\mathbf{D}$ as follows \cite[Prop. 3.22]{gurlebeckclifford}:
\begin{equation}\label{eq:Gausstheorem}
	\int_{\Gamma}\operatorname{tr}_{\Gamma}u(y)\hat{\nu}(y)\operatorname{tr}_{\Gamma}v(y)dS_y=\int_G\left((u\mathbf{D})(x)v(x)+u(x)(\mathbf{D}v(x))\right)dV_x,
\end{equation}
for all $u\in W^{1,p}(G;\hc), \; v\in W^{1,p'}(G;\hc)$, where $p'=\frac{p}{p-1}$. The (regular) {\it Cauchy operator} over $\Gamma$ is defined as \cite[Sec. 7.2]{gurlebeckhabetaspro}
\begin{equation}\label{eq:cauchyop}
	\mathbf{C}_{\Gamma}u(x):= \int_{\Gamma}E(y-x)\hat{\nu}(y)u(y)dS_y, \quad x\in \mathbb{R}^3\setminus \Gamma.
\end{equation} 
For $u\in C^{\infty}(\overline{G};\hc)$, the {\it Borel-Pompeiu formula} is valid \cite[Th. 1.4.4]{gurlebeckquater}:
\begin{equation}\label{eq:borelpompeiu}
	\mathbf{C}_{\Gamma}[u|_{\Gamma}](x)+\mathbf{T}_G[\mathbf{D}u](x)=\begin{cases}
		u(x), & \mbox{ if } x\in G,\\
		0, & \mbox{ if } x\in \mathbb{R}^3\setminus \overline{G}.
	\end{cases}
\end{equation}
The following proposition summarizes the properties of both integral operators, as found in \cite[Sec. 2.3 and 2.5]{gurlebeckquater},  \cite[Sec. 3.1]{gurlebeckclifford}, and \cite[Ch. 7]{gurlebeckhabetaspro}

\begin{proposition}\label{prop:integralop}
	Let $1<p<\infty$. The following statements hold.
	\begin{itemize}
		\item[(i)] $\mathbf{T}_G\in \mathcal{B}(L_p(G;\hc),W^{1,p}(G;\hc))$. Furthermore, $\mathbf{D}\mathbf{T}_Gu=u$ for all $u\in L_p(G;\hc)$.
		\item[(ii)] $\mathbf{C}_{\Gamma}\in \mathcal{B}(W^{1-\frac{1}{p},p}(\Gamma;\hc),W^{1,p}(G;\hc))$, and $\mathbf{C}_{\Gamma}\psi \in \mathfrak{M}(\mathbb{R}^3\setminus \Gamma;\hc)$ for all $\psi\in W^{1-\frac{1}{p},p}(\Gamma;\hc)$.
		\item[(iii)] The Borel-Pompeiu formula \eqref{eq:borelpompeiu} holds for $u\in W^{1,p}(G;\hc)$. In particular, if $u\in W^{1,p}_0(G;\hc)$, then $u=\mathbf{T}_G\mathbf{D}u$. 
		\item[(iv)] For $p>3$, $\mathbf{T}_{G}\in \mathcal{B}(L_p(G;\hc), C^{0,1-\frac{3}{p}}(\overline{G};\hc))$. 
	\end{itemize}
\end{proposition}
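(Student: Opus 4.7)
The plan is to treat the four parts separately, leveraging classical singular integral theory and density arguments.

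For part (i), the Cauchy kernel $E(y-x)$ has a weak singularity of order $|y-x|^{-2}$, which is locally integrable on $\mathbb{R}^3$. After extending $u$ by zero to $\mathbb{R}^3$ we may view $\mathbf{T}_G u$ as the convolution $E * u$, and $L_p$-boundedness follows either from Young's inequality (noting $E \in L_q(B_R)$ for $q<3/2$) combined with Hardy--Littlewood--Sobolev, or directly from weak-type $(1,1)$ estimates. The harder step is that the weak derivatives $\partial_k \mathbf{T}_G u$ also lie in $L_p$: formal differentiation yields kernels $\partial_{x_k} E(y-x)$ with singularity $|y-x|^{-3}$ and vanishing mean on spheres, so they are Calder\'on--Zygmund kernels, and the classical theory gives $L_p$-boundedness of the associated principal-value singular integrals for $1<p<\infty$, modulo a delta-term that provides the identity $\mathbf{D}\mathbf{T}_G u = u$ when read distributionally. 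This identity is first checked for $u\in C_c^\infty(G;\hc)$ using the fact that $E$ is a fundamental solution of $\mathbf{D}$, and then extended to $L_p$ by density together with the just-established continuity of $\mathbf{T}_G$.

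For part (ii), monogenicity of $\mathbf{C}_\Gamma \psi$ on $\mathbb{R}^3 \setminus \Gamma$ is obtained by differentiating under the integral sign, since $\mathbf{D}_x E(y-x)=0$ for $y\neq x$. For the mapping property, use a bounded right inverse of the trace operator to extend $\psi$ to $\tilde\psi \in W^{1,p}(G;\hc)$ with $\operatorname{tr}_\Gamma \tilde\psi=\psi$ and $\|\tilde\psi\|_{W^{1,p}} \lesssim \|\psi\|_{W^{1-1/p,p}}$; then apply the Borel--Pompeiu formula \eqref{eq:borelpompeiu}, first for smooth functions and then for $\tilde\psi$ by density, to rewrite $\mathbf{C}_\Gamma \psi = \tilde\psi - \mathbf{T}_G \mathbf{D}\tilde\psi$ on $G$. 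Both terms belong to $W^{1,p}(G;\hc)$ by (i), with the desired norm control. For part (iii), since $\Gamma$ is Liapunov, $C^\infty(\overline{G};\hc)$ is dense in $W^{1,p}(G;\hc)$; approximating $u$ by smooth $u_n$ and passing to the limit in \eqref{eq:borelpompeiu} using the continuity of $\mathbf{C}_\Gamma \circ \operatorname{tr}_\Gamma$ and $\mathbf{T}_G \circ \mathbf{D}$ from (i)--(ii) proves the formula for $W^{1,p}$; for $u\in W_0^{1,p}(G;\hc)$ one approximates by $C_c^\infty(G;\hc)$, whose trace is zero, so only the $\mathbf{T}_G\mathbf{D}u$-term survives. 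Finally, part (iv) is immediate from (i) combined with Morrey's embedding $W^{1,p}(G;\hc)\hookrightarrow C^{0,1-3/p}(\overline{G};\hc)$ for $p>3$, as recorded in Remark \ref{rem:sobolevembeddings}.

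The main obstacle is the $L_p$-boundedness of the differentiated kernel in part (i), which requires the full Calder\'on--Zygmund machinery; once this is in hand, the other statements reduce to differentiation under the integral, density arguments on the Liapunov domain $G$, and the Sobolev embedding already collected in Remark \ref{rem:sobolevembeddings}.
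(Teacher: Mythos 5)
The paper does not prove this proposition; it simply collects these facts from the cited references (G\"urlebeck--Spr\"o\ss ig and G\"urlebeck--Habetha--Spr\"o\ss ig), where the arguments are exactly the ones you sketch: Calder\'on--Zygmund theory for the differentiated Theodorescu kernel, the fundamental-solution property of $E$ for $\mathbf{D}\mathbf{T}_G=\mathbf{I}$, extension plus Borel--Pompeiu for the Cauchy operator, and density of $C^{\infty}(\overline{G};\hc)$ on the Liapunov domain. Your reconstruction is correct and matches that standard route, including the reduction of (iv) to (i) via the Morrey embedding of Remark \ref{rem:sobolevembeddings}.
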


Using \eqref{eq:Gausstheorem} and \eqref{eq:conjugateD}, we can formulate a notion of weak $\mathbf{D}$-derivative\footnote{This definition was originally presented in \cite[Def. 2.4.1]{gurlebeckquater}, but with a mistake in the sign.}. Let $u\in L_{1,loc}(G;\hc)$. A function $v\in L_{1,loc}(G;\hc)$ is called the {\it weak} $\mathbf{D}$-derivative of $u$, if the following relation holds:
\begin{equation}\label{eq:defofweakD}
	\int_G \overline{u}(\mathbf{D}\varphi) =\int_G \overline{v}\varphi, \quad \forall \varphi\in C_0^{\infty}(G;\hc).
\end{equation}
In such a case, $v$ is unique a.e. in $G$, and we denote $v=\mathbf{D}_wu$. The class of functions in $L_p(G;\hc)$ possessing a weak $\mathbf{D}$-derivative in $L_p(G;\hc)$ is denoted by $\mathfrak{D}_p(G;\hc)$. 
\begin{remark}\label{remark:weakderi}
	\begin{itemize}
		\item[(i)] If $u\in W^{1,p}(G;\hc)$, then $\mathbf{D}u=\mathbf{D}_wu$. This follows from \eqref{eq:Gausstheorem}.
		\item[(ii)] $\mathfrak{D}_p(G;\hc)$ is a Banach space with the norm $\|u\|_{\mathfrak{D}_p(G;\hc)}:=\left( \|u\|_{L_p(G;\hc)}^p+\|\mathfrak{D}_wu\|_{L_p(G;\hc)}\right)^{\frac{1}{p}}$. The proof is similar to that of the completeness of Sobolev spaces.
	\item[(iii)] For every $u\in L_p(G;\hc)$, $\mathbf{D}_w\mathbf{T}_Gu=\mathbf{D}\mathbf{T}_Gu=u$. This follows from Proposition \ref{prop:integralop}(i) and point (i).
	\item[(iv)] Let $\mathfrak{M}^p(G;\hc):=\{u\in \mathfrak{D}_p(G;\hc)\, |\, \mathbf{D}_wu=0\}$. Take a scalar function $\varphi\in C_0^{\infty}(G)$ and $\Phi=\mathbf{D}\varphi$. By Definition \ref{eq:defofweakD} and the factorization $-\bigtriangleup_3=-\mathbf{D}^2$ we have:
	\[
	0=\int_G \overline{\mathbf{D}_{w}u}\mathbf{D}\Phi =-\int_G \overline{u}\bigtriangleup_3 \varphi,
	\]
	which implies that the components $\{u_k\}_{k=0}^{3}$ are weakly harmonic. It follows from the Weyl lemma \cite[Subsec. 7 of Sec. 21]{vladimirov} that $u\in C^{\infty}(G;\hc)$. Therefore $\mathfrak{M}^p(G;\hc) =\mathfrak{M}(G;\hc) \cap L_p(G;\hc)$.
	\item[(v)] When $u\in \mathfrak{D}_p(G;\hc)$ with $v=\mathbf{D}_wu$, point (iii) implies that $u=h+\mathbf{T}_Gv$, where $h\in \mathfrak{M}^p(G;\hc)$. From the previous point and Proposition \ref{prop:integralop}(i), $u\in W^{1,p}_{loc}(G;\hc)$.  Due to point (i), we use the notation $\mathbf{D}u$ for the weak $\mathbf{D}$-derivative.
	\end{itemize}
\end{remark}
The following lemma is based on ideas from \cite[Th. 3.1]{gurlebeckquater} and \cite[Th. 3.75 ]{gurlebeckclifford}. We include it for the sake of completeness.
\begin{lemma}\label{lema:integral}
	Let $1<p<\infty$ and $u\in W^{1,p}(G;\hc)$ such that
	\begin{equation}
		\int_{\Gamma}\psi(y)\hat{\nu}(y)\operatorname{tr}_{\Gamma}u(y)dS_y=0, \quad \forall \psi \in W^{1-\frac{1}{p'},p'}(G;\hc).
	\end{equation}
Hence $u=w+g$, where $w\in W_0^{1,p}(G;\hc)$ and $g\in W^{1,p}(G;\hc)\cap\mathfrak{M}^p(G;\hc)$. 
\end{lemma}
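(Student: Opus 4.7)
The plan is to read off the decomposition directly from the Borel--Pompeiu representation of Proposition~\ref{prop:integralop}(iii). Set $g:=\mathbf{C}_\Gamma[\operatorname{tr}_\Gamma u]$ and $w:=\mathbf{T}_G[\mathbf{D}u]$, so that $u=g+w$ in $G$. By Proposition~\ref{prop:integralop}(ii), $g$ is monogenic in $G$ and belongs to $W^{1,p}(G;\hc)$, hence $g\in W^{1,p}(G;\hc)\cap \mathfrak{M}^p(G;\hc)$; by Proposition~\ref{prop:integralop}(i), $w\in W^{1,p}(G;\hc)$. The only remaining obligation is to show $w\in W_0^{1,p}(G;\hc)$.

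For this I would reduce matters to verifying $\operatorname{tr}_\Gamma u=0$, since once this is known $g=\mathbf{C}_\Gamma[0]=0$ and $w=u\in W_0^{1,p}(G;\hc)$ automatically. Reading the test space as the trace space $W^{1-\frac{1}{p'},p'}(\Gamma;\hc)$ of $W^{1,p'}(G;\hc)$, the hypothesis supplies $\int_\Gamma \psi\,\hat{\nu}\,\operatorname{tr}_\Gamma u\, dS_y=0$ for every such $\psi$. Testing with scalar-valued $\psi\in C^\infty(\Gamma)$, which sits inside the hypothesis space and is dense in $L_{p'}(\Gamma)$, and expanding the quaternion-valued function $\hat{\nu}\,\operatorname{tr}_\Gamma u\in L_p(\Gamma;\hc)$ into its four complex components, standard $L_{p'}$--$L_p$ duality in each component forces that component to vanish. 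Consequently $\hat{\nu}\,\operatorname{tr}_\Gamma u=0$ almost everywhere on $\Gamma$.

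Since $\hat{\nu}(y)$ is a purely vectorial unit quaternion at every $y\in \Gamma$, we have $\hat{\nu}(y)^2=-1$, so $\hat{\nu}(y)$ is pointwise invertible with inverse $-\hat{\nu}(y)$. Left-multiplying the identity $\hat{\nu}\,\operatorname{tr}_\Gamma u=0$ by $-\hat{\nu}$ therefore yields $\operatorname{tr}_\Gamma u=0$ a.e.\ on $\Gamma$, finishing the argument.

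The step I expect to be the main obstacle is the density/duality argument: the integrand $\psi\,\hat{\nu}\,\operatorname{tr}_\Gamma u$ couples three noncommuting quaternionic factors with a merely H\"older-continuous $\hat{\nu}$ sandwiched in the middle, and one must be careful when reducing the vanishing of the quaternion-valued integral to that of scalar integrals. Restricting to scalar $\psi$ is the cleanest way to decouple the quaternionic structure from the classical duality, since scalar factors commute through the units $e_k$ and keep the component splitting intact.
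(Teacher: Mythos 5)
Your argument is correct for the lemma as literally stated, but it takes a genuinely different --- and in fact shorter --- route than the paper's, and it is worth seeing why the two diverge. The paper only ever uses the hypothesis for the special test functions $\psi=\operatorname{tr}_{\Gamma}E(\cdot-q_n)$ with $\{q_n\}$ dense in $\mathbb{R}^3\setminus\overline{G}$: the vanishing of those integrals says exactly that $\mathbf{C}_{\Gamma}[\operatorname{tr}_{\Gamma}u]$ vanishes on the exterior domain, and the Plemelj--Sokhotski formulas then upgrade this to $\operatorname{tr}_{\Gamma^{+}}\mathbf{C}_{\Gamma}[\operatorname{tr}_{\Gamma}u]=\operatorname{tr}_{\Gamma}u$, so that $g=\mathbf{C}_{\Gamma}[\operatorname{tr}_{\Gamma}u]$ is a genuinely (possibly) nonzero monogenic summand and $w=u-g$ has zero trace. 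You instead exploit the full strength of the stated hypothesis: testing against all scalar functions in the trace space, which are dense in $L_{p'}(\Gamma)$, and using componentwise $L_{p'}$--$L_p$ duality together with the pointwise invertibility $\hat{\nu}^2=-1$, you force $\operatorname{tr}_{\Gamma}u=0$ outright, so the decomposition degenerates to $g=0$, $w=u$. This does prove the statement (indeed a stronger conclusion), and in the one place the lemma is invoked (Proposition~\ref{prop:transposeD}) the full hypothesis is available, so your version would even render the subsequent elimination of the monogenic part $g$ redundant there. What your proof buys is brevity; what it loses is robustness: the paper's argument establishes the decomposition under the much weaker hypothesis that the boundary integral vanishes only for $\psi$ extending monogenically past $\overline{G}$ (the Cauchy kernels suffice), which is the form in which such orthogonality conditions typically arise, and under that weaker hypothesis your duality step is unavailable while the paper's proof still works. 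Two points you should make explicit: the test space in the statement must be read as the trace space $W^{1-\frac{1}{p'},p'}(\Gamma;\hc)$ (both you and the paper do this implicitly), and the final step $\operatorname{tr}_{\Gamma}u=0\Rightarrow u\in W_0^{1,p}(G;\hc)$ requires the characterization of $W_0^{1,p}$ as the kernel of the trace operator on Lipschitz domains, e.g.\ \cite[Th. 3.40]{maclean}.
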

\begin{proof}
	Set $\varphi=\operatorname{tr}_{\Gamma}u$.  Consider the singular Cauchy operator\\ $\mathbf{S}_{\Gamma}\varphi(x)=2\operatorname{P.V.} \int_{\Gamma}E(y-x)\hat{\nu}(y)\varphi(y)dS_y$, $x\in \Gamma$. It is known that $\mathbf{S}_{\Gamma}\in \mathcal{B}\left(W^{1-\frac{1}{p},p}(\Gamma;\hc)\right)$ (see \cite[p. 421]{mkhlin}), and that the Plemelj-Sokhotski formulas are valid:
	\begin{equation}\label{eq:PS}
		\operatorname{tr}_{\Gamma^{\pm}}\mathbf{C}_{\Gamma}\varphi=\frac{1}{2}\left(\pm \varphi+\mathbf{S}_{\Gamma}\varphi\right),
	\end{equation}
where $\operatorname{tr}_{\Gamma^{\pm}}\mathbf{C}_{\Gamma}$ denotes the trace of $\mathbf{C}_{\Gamma}$ from $G$ and $\mathbb{R}^3\setminus \overline{G}$, respectively (see \cite[Remark 2.5.11]{gurlebeckquater}). Let  $\{q_n\}_{n=0}^{\infty}\subset \mathbb{R}^3\setminus\overline{G}$ be a dense set, and put $E_n(x)=E(x-q_n)$, $n\in \mathbb{N}_0$. Hence $\{E_n\}_{n=0}^{\infty}\subset W^{1,p'}(G;\hc)$. By hypothesis, $\int_{\Gamma}\operatorname{tr}_{\Gamma}E_n(y)\hat{\nu}(y)\operatorname{tr}_{\Gamma}u(y)dS_y=0$, but this is precisely $\mathbf{C}_{\Gamma}[\psi](q_n)=0$ for all $n\in \mathbb{N}_0$. Since $\{q_n\}_{n=0}^{\infty}$ is dense in $\mathbb{R}^3\setminus G$, by Proposition \ref{prop:integralop}(ii), $\mathbf{C}_{\Gamma}[\psi](x)=0$ for all $x\in \mathbb{R}^3\setminus \overline{G}$. Hence $\operatorname{tr}_{\Gamma^{-}}C_{\Gamma}\varphi=0$ and by \eqref{eq:PS}, $\varphi=\mathbf{S}_{\Gamma}\varphi$, which implies that $\operatorname{tr}_{\Gamma^{+}}C_{\Gamma}\varphi=\varphi$. Taking, $g=\mathbf{C}_{\Gamma}\varphi\in W^{1,p}(G;\hc)\cap\mathfrak{M}^p(G;\hc)$ and $w=u-g\in W_0^{1,p}(G;\hc)$, we obtain the result.
\end{proof}
\section{Properties of biquaternionic Vekua equation}
Given $p\in \hc$, denote $M^pq:= qp$ for all $q\in \hc$. Let $1<p<\infty$ and $A=(a_1,a_2,a_3,a_4)\in (L_{\infty}(G;\hc))^4$. Define the operator 
\begin{equation}\label{eq:opQ}
	\mathbf{Q}_Aw=M^{a_1}w+M^{a_2}\overline{w}+a_3w+a_4\overline{w}.
\end{equation}
Note that $\mathbf{Q}_A\in \mathcal{B}(L_p(G;\hc))$. We consider the Vekua equation
\begin{equation}\label{eq:Vekua1}
	\mathbf{D}w-\mathbf{Q}_Aw=0,\quad \mbox{ in } G.
\end{equation}
\begin{definition}
	The {\bf Vekua-Bergman space} associated with Eq. \eqref{eq:Vekua1}, is the class of all weak solutions $W\in L_p(G;\hc)$ of \eqref{eq:Vekua1}, denoted by $\mathfrak{V}_A^p(G;\hc)$.
\end{definition}
In the case $a_j\equiv 0$, $j=1,2,3,4$, $\mathfrak{V}_A^p(G;\hc)=\mathfrak{M}^p(G;\hc)$. We introduce the unbounded operator $\mathbf{D}-\mathbf{Q}_A:\mathfrak{D}_p(G;\hc)\subset L_p(G;\hc) \rightarrow L_p(G;\hc)$, together with the operator
\begin{equation}\label{eq:opS}
	\mathbf{S}_G^Aw:= w-\mathbf{T}_G\mathbf{Q}_Aw \quad \mbox{for } w\in L_p(G).
\end{equation}
By Proposition \ref{prop:integralop}(i), $\mathbf{S}_A\in \mathcal{B}(L_p(G))$. When $\frac{3}{2}<p<3$, note that $p^*=\frac{3p}{p-3}>3$. Thus, taking $3<q<p^*$, we have the following compositions: $L_p(G;\hc) \overset{\mathbf{T}_G}{\rightarrow} W^{1,p}(G)\hookrightarrow L_q(G;\hc) \hookrightarrow L_p(G;\hc)$. By Remark \ref{rem:sobolevembeddings}, $\mathbf{T}_G\in \mathcal{K}(L_p(G;\hc))$. Hence, $\mathbf{S}_G^A$ is a Fredholm operator with index $0$, and in particular, it possesses a finite-dimensional kernel \cite[Th. 2.22]{maclean}.

\begin{proposition}\label{prop:differentiabilityofsol}
	The following statements hold.
	\begin{itemize}
		\item[(i)] $\mathbf{S}_G^A(\mathfrak{D}_p(G;\hc)) \subset \mathfrak{D}_p(G;\hc)$, and 
		\begin{equation}\label{eq:relationSD}
			\mathbf{D}\mathbf{S}_G^Aw=(\mathbf{D}-\mathbf{Q}_A)w,\quad \forall w\in \mathfrak{D}_p(G;\hc).
		\end{equation}
	\item[(ii)] If $w\in \mathfrak{D}_p(G;\hc)$ is a weak solution of the non-homogeneous equation $(\mathbf{D}-\mathbf{Q}_A)w=v$, with $v\in L_p(G;\hc)$, then $w\in W^{1,p}_{loc}(G;\hc)$. In particular, $\mathfrak{V}_A^p(G;\hc)\subset W^{1,p}_{loc}(G;\hc)$. 
	\end{itemize}
\end{proposition}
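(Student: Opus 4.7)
The plan is to lean on the two ingredients already proved: Proposition \ref{prop:integralop}(i), which gives $\mathbf{T}_G\in \mathcal{B}(L_p(G;\hc),W^{1,p}(G;\hc))$ with $\mathbf{D}\mathbf{T}_Gu=u$, and Remark \ref{remark:weakderi}, which transfers this to the weak setting and provides the decomposition $u=h+\mathbf{T}_G v$ whenever $u\in \mathfrak{D}_p(G;\hc)$ has weak derivative $v$. The proof is essentially a bookkeeping exercise in these facts, so I do not anticipate a genuine obstacle; the one point requiring care is the passage between classical and weak $\mathbf{D}$-derivatives.

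For part (i), I would argue as follows. Let $w\in \mathfrak{D}_p(G;\hc)$. Since $\mathbf{Q}_A\in \mathcal{B}(L_p(G;\hc))$, the function $\mathbf{Q}_Aw$ lies in $L_p(G;\hc)$, so by Remark \ref{remark:weakderi}(iii) the function $\mathbf{T}_G\mathbf{Q}_Aw$ belongs to $\mathfrak{D}_p(G;\hc)$ with weak derivative $\mathbf{D}_w\mathbf{T}_G\mathbf{Q}_Aw=\mathbf{Q}_Aw$. Because $\mathfrak{D}_p(G;\hc)$ is a linear space (it is a Banach space by Remark \ref{remark:weakderi}(ii)), we conclude that $\mathbf{S}_G^Aw=w-\mathbf{T}_G\mathbf{Q}_Aw\in \mathfrak{D}_p(G;\hc)$, and linearity of the weak $\mathbf{D}$-derivative yields
\[
\mathbf{D}_w\mathbf{S}_G^Aw=\mathbf{D}_ww-\mathbf{D}_w\mathbf{T}_G\mathbf{Q}_Aw=\mathbf{D}_ww-\mathbf{Q}_Aw=(\mathbf{D}-\mathbf{Q}_A)w,
\]
where the last equality uses the convention from Remark \ref{remark:weakderi}(v) that $\mathbf{D}$ denotes the weak derivative on $\mathfrak{D}_p(G;\hc)$.

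For part (ii), suppose $(\mathbf{D}-\mathbf{Q}_A)w=v$ with $v\in L_p(G;\hc)$. By part (i) this is the statement $\mathbf{D}_w\mathbf{S}_G^Aw=v$, so $\mathbf{S}_G^Aw\in \mathfrak{D}_p(G;\hc)$ with weak derivative in $L_p(G;\hc)$. Remark \ref{remark:weakderi}(v) then applies and gives $\mathbf{S}_G^Aw\in W^{1,p}_{loc}(G;\hc)$. On the other hand, Proposition \ref{prop:integralop}(i) ensures $\mathbf{T}_G\mathbf{Q}_Aw\in W^{1,p}(G;\hc)\subset W^{1,p}_{loc}(G;\hc)$. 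Writing
\[
w=\mathbf{S}_G^Aw+\mathbf{T}_G\mathbf{Q}_Aw,
\]
both summands lie in $W^{1,p}_{loc}(G;\hc)$, and hence so does $w$. Specializing to $v=0$ (i.e.\ $w\in \mathfrak{V}_A^p(G;\hc)$) gives the stated inclusion $\mathfrak{V}_A^p(G;\hc)\subset W^{1,p}_{loc}(G;\hc)$.
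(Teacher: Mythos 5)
Your proof is correct and follows essentially the same route as the paper: part (i) is the same bookkeeping with Remark \ref{remark:weakderi}(iii) and Proposition \ref{prop:integralop}(i), and part (ii) rests on the same decomposition of $w$ into $\mathbf{S}_G^Aw$ plus $\mathbf{T}_G\mathbf{Q}_Aw$ together with Remark \ref{remark:weakderi}(v). The only cosmetic difference is that the paper unpacks $\mathbf{S}_G^Aw$ one step further into $g+\mathbf{T}_Gv$ with $g\in\mathfrak{M}^p(G;\hc)$, obtaining the representation $w=g+\mathbf{T}_G[\mathbf{Q}_Aw+v]$ that it reuses later in the regularity section, whereas you stop at the sum of two $W^{1,p}_{loc}$ functions; both are valid.
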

\begin{proof}
	The contention $\mathbf{S}_G^A(\mathfrak{D}_p(G;\hc)) \subset \mathfrak{D}_p(G;\hc)$ follows from Remark \ref{remark:weakderi}(iii), and equality \eqref{eq:relationSD} follows from Proposition \ref{prop:integralop}(i). For the statement (ii), let $h=\mathbf{S}_G^Aw$. By \eqref{eq:relationSD}, $(\mathbf{D}-\mathbf{Q}_A)w=v$ iff $\mathbf{D}h=v$. Remark \ref{remark:weakderi}(v) implies $h=g+\mathbf{T}_Gv$ with $g\in \mathfrak{M}^p(G;\hc)$, from which we obtain $w=g+\mathbf{T}_G[\mathbf{Q}_Aw+v]$. Since $\mathbf{Q}_Aw+v\in L_p(G;\hc)$ and $g\in \mathfrak{M}^p(G;\hc)$, by Proposition \ref{prop:integralop}(i) and Remark \ref{remark:weakderi}(iv), we conclude that $w\in W^{1,p}_{loc}(G;\hc)$.
\end{proof}

\begin{corollary}\label{cor:vekuaiffmonogenic}
	$w\in \mathfrak{V}_A^p(G;\hc) $ iff $\mathbf{S}_G^Aw\in \mathfrak{M}^p(G;\hc)$.
\end{corollary}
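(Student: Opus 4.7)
The equivalence is essentially a direct corollary of the identity $\mathbf{D}\mathbf{S}_G^Aw=(\mathbf{D}-\mathbf{Q}_A)w$ from Proposition \ref{prop:differentiabilityofsol}(i). The plan is to check that both implications reduce to this identity once one verifies that the relevant functions lie in $\mathfrak{D}_p(G;\hc)$, so that (i) can be applied. The main preliminary observation I would make is that a weak $L_p$-solution of \eqref{eq:Vekua1} automatically belongs to $\mathfrak{D}_p(G;\hc)$: indeed, the weak equation $\mathbf{D}_w w=\mathbf{Q}_Aw$ together with $\mathbf{Q}_A\in\mathcal{B}(L_p(G;\hc))$ yields $\mathbf{D}_ww\in L_p(G;\hc)$.

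For the forward direction, suppose $w\in\mathfrak{V}_A^p(G;\hc)$. By the observation above, $w\in\mathfrak{D}_p(G;\hc)$, so Proposition \ref{prop:differentiabilityofsol}(i) gives $\mathbf{S}_G^Aw\in\mathfrak{D}_p(G;\hc)$ and
\[
\mathbf{D}\mathbf{S}_G^Aw=(\mathbf{D}-\mathbf{Q}_A)w=0.
\]
Since $\mathbf{S}_G^A\in\mathcal{B}(L_p(G;\hc))$, also $\mathbf{S}_G^Aw\in L_p(G;\hc)$, hence $\mathbf{S}_G^Aw\in\mathfrak{M}^p(G;\hc)$.

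For the converse, assume $w\in L_p(G;\hc)$ with $\mathbf{S}_G^Aw\in\mathfrak{M}^p(G;\hc)$. Writing $w=\mathbf{S}_G^Aw+\mathbf{T}_G\mathbf{Q}_Aw$, the first summand belongs to $\mathfrak{M}^p(G;\hc)\subset\mathfrak{D}_p(G;\hc)$ and the second to $W^{1,p}(G;\hc)\subset\mathfrak{D}_p(G;\hc)$ by Proposition \ref{prop:integralop}(i) together with Remark \ref{remark:weakderi}(i). Thus $w\in\mathfrak{D}_p(G;\hc)$, and Proposition \ref{prop:differentiabilityofsol}(i) applies to give $(\mathbf{D}-\mathbf{Q}_A)w=\mathbf{D}\mathbf{S}_G^Aw=0$, i.e.\ $w\in\mathfrak{V}_A^p(G;\hc)$.

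Since both implications are immediate consequences of already-proved facts, I do not anticipate a real obstacle here; the only point that requires a line of justification is the membership $w\in\mathfrak{D}_p(G;\hc)$ in each direction, which in the forward direction uses the boundedness of $\mathbf{Q}_A$ on $L_p$ and in the reverse direction uses the decomposition $w=\mathbf{S}_G^Aw+\mathbf{T}_G\mathbf{Q}_Aw$.
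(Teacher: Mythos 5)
Your proposal is correct and follows exactly the route the paper intends: the corollary is stated without proof precisely because it is the immediate consequence of the identity $\mathbf{D}\mathbf{S}_G^Aw=(\mathbf{D}-\mathbf{Q}_A)w$ from Proposition \ref{prop:differentiabilityofsol}(i), combined with the characterization $\mathfrak{M}^p(G;\hc)=\{u\in\mathfrak{D}_p(G;\hc)\,|\,\mathbf{D}_wu=0\}$ of Remark \ref{remark:weakderi}(iv). Your two membership checks (that $\mathfrak{V}_A^p(G;\hc)\subset\mathfrak{D}_p(G;\hc)$ since $\mathbf{Q}_Aw\in L_p$, and that $w=\mathbf{S}_G^Aw+\mathbf{T}_G\mathbf{Q}_Aw\in\mathfrak{D}_p(G;\hc)$ in the converse) are exactly the details the paper leaves implicit.
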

\begin{proposition}\label{prop:operadorDQisclosed}
	The operator $\mathbf{D}-\mathbf{Q}_A: \mathfrak{D}_p(G;\hc)\rightarrow L_p(G;\hc)$ is closed.
\end{proposition}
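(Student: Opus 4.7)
The plan is to reduce closedness of $\mathbf{D}-\mathbf{Q}_A$ to the closedness of the weak Dirac operator $\mathbf{D}_w$ on its natural domain, using the factorization $\mathbf{D}\mathbf{S}_G^A=\mathbf{D}-\mathbf{Q}_A$ established in Proposition \ref{prop:differentiabilityofsol}(i). The key observation is that $\mathbf{S}_G^A=\mathbf{I}-\mathbf{T}_G\mathbf{Q}_A\in \mathcal{B}(L_p(G;\hc))$ is globally bounded (not just on the graph-norm of $\mathfrak{D}_p$), so it interacts nicely with $L_p$-convergence.

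Concretely, suppose $\{w_n\}\subset \mathfrak{D}_p(G;\hc)$ satisfies $w_n\to w$ in $L_p(G;\hc)$ and $(\mathbf{D}-\mathbf{Q}_A)w_n\to v$ in $L_p(G;\hc)$. I first apply $\mathbf{S}_G^A$, which is bounded on $L_p$, to get $\mathbf{S}_G^A w_n\to \mathbf{S}_G^A w$ in $L_p$. By the identity \eqref{eq:relationSD}, $\mathbf{D}\mathbf{S}_G^A w_n=(\mathbf{D}-\mathbf{Q}_A)w_n\to v$ in $L_p$ as well. Next I invoke the closedness of the weak Dirac operator: the defining relation \eqref{eq:defofweakD} is stable under $L_p$-convergence of both sides (pass to the limit in $\int_G\overline{\mathbf{S}_G^A w_n}\,\mathbf{D}\varphi=\int_G \overline{\mathbf{D}\mathbf{S}_G^A w_n}\,\varphi$ for each fixed $\varphi\in C_0^\infty(G;\hc)$), which yields $\mathbf{S}_G^A w\in \mathfrak{D}_p(G;\hc)$ with $\mathbf{D}(\mathbf{S}_G^A w)=v$.

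To conclude that $w$ itself lies in $\mathfrak{D}_p$, I write $w=\mathbf{S}_G^A w+\mathbf{T}_G\mathbf{Q}_A w$. The term $\mathbf{T}_G\mathbf{Q}_A w$ is in $W^{1,p}(G;\hc)\subset \mathfrak{D}_p(G;\hc)$ by Proposition \ref{prop:integralop}(i), and $\mathbf{S}_G^A w\in \mathfrak{D}_p(G;\hc)$ by the previous step, so $w\in \mathfrak{D}_p(G;\hc)$. Using Remark \ref{remark:weakderi}(iii), $\mathbf{D}(\mathbf{T}_G\mathbf{Q}_A w)=\mathbf{Q}_A w$, hence
\[
\mathbf{D}w-\mathbf{Q}_A w=\mathbf{D}(\mathbf{S}_G^A w)+\mathbf{Q}_A w-\mathbf{Q}_A w=v,
\]
establishing the closedness.

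I do not foresee a real obstacle here: the argument is essentially formal once one sees that $\mathbf{S}_G^A$ converts $\mathbf{D}-\mathbf{Q}_A$ into $\mathbf{D}$. The only point deserving care is the passage to the limit in \eqref{eq:defofweakD}, but this is immediate from Hölder's inequality applied to test functions $\varphi\in C_0^\infty(G;\hc)$, since such $\varphi$ and $\mathbf{D}\varphi$ lie in every $L_{p'}$.
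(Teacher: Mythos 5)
Your argument is correct and is essentially the paper's own proof: both reduce closedness of $\mathbf{D}-\mathbf{Q}_A$ to closedness of the weak Dirac operator via the bounded operator $\mathbf{S}_G^A$ and the identity \eqref{eq:relationSD}, then recover $w\in\mathfrak{D}_p(G;\hc)$ from the decomposition $w=\mathbf{S}_G^Aw+\mathbf{T}_G\mathbf{Q}_Aw$. The only cosmetic difference is that you verify the closedness of $\mathbf{D}_w$ directly by passing to the limit in \eqref{eq:defofweakD}, whereas the paper cites Remark \ref{remark:weakderi}(ii) for the same fact.
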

\begin{proof}
	The case when $A\equiv (0,0,0,0)$ is given by Remark \ref{remark:weakderi}(ii). For the general case, let $\{w_n\}_{n=0}^{\infty}\subset \mathfrak{D}_p(G;\hc)$ and $w,v\in L_p(G;\hc)$ such that $w_n\rightarrow w$ and  $(\mathbf{D}-\mathbf{Q}_A)w_n\rightarrow v$ in $L_p(G;\hc)$. Taking $h_n=\mathbf{S}_G^Aw_n,h=\mathbf{S}_G^Aw\in \mathfrak{D}_p(G;\hc)$, $n\in \mathbb{N}_0$,  the continuity of $\mathbf{S}_G^A$ and relation \eqref{eq:relationSD} imply that $h_n\rightarrow h$ and $\mathbf{D}h_n\rightarrow v$ in $L_p(G;\hc)$. Since $\mathfrak{D}$ is closed in $L_p(G;\hc)$, $h\in \mathfrak{D}_p(G;\hc)$ and $\mathbf{D}h=v$. By the relations $w=h+\mathbf{T}_G\mathbf{Q}_Aw\in \mathfrak{D}_p(G;\hc)$ and \eqref{eq:relationSD}, we obtain that $\mathbf{D}-\mathbf{Q}_A$ is closed.
\end{proof}
\begin{theorem}
	The Vekua-Bergman space $\mathfrak{V}_A^p(G;\hc)$ is a separable and reflexive Banach space. In particular, $\mathfrak{V}_{A}^2(G;\mathbb{B})$ is a separable Hilbert space.
\end{theorem}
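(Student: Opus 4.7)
The plan is to reduce everything to a single structural observation: the Vekua-Bergman space is precisely the kernel of the closed unbounded operator $\mathbf{D}-\mathbf{Q}_A$ acting in $L_p(G;\hc)$, so it inherits the good Banach-space properties of the ambient $L_p$-space.

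First I would note that, by the definition of weak solution,
\begin{equation*}
	\mathfrak{V}_A^p(G;\hc)=\ker(\mathbf{D}-\mathbf{Q}_A)=\{w\in \mathfrak{D}_p(G;\hc)\,:\,(\mathbf{D}-\mathbf{Q}_A)w=0\},
\end{equation*}
considered as a subset of $L_p(G;\hc)$. Then I would invoke Proposition \ref{prop:operadorDQisclosed} to see that if $w_n\in\mathfrak{V}_A^p(G;\hc)$ converges to $w$ in $L_p(G;\hc)$, the closedness of $\mathbf{D}-\mathbf{Q}_A$ (applied with the trivial sequence $(\mathbf{D}-\mathbf{Q}_A)w_n\equiv 0\to 0$) forces $w\in \mathfrak{D}_p(G;\hc)$ and $(\mathbf{D}-\mathbf{Q}_A)w=0$; hence $\mathfrak{V}_A^p(G;\hc)$ is a closed subspace of $L_p(G;\hc)$ equipped with the induced $L_p$-norm, and in particular a Banach space.

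Next I would appeal to the standard fact that closed subspaces of separable Banach spaces are separable and that closed subspaces of reflexive Banach spaces are reflexive (\cite[Prop. 3.20]{brezis} type statements, already used in the excerpt to handle $L_p(G;\hc)$ itself). Since $L_p(G;\hc)\cong (L_p(G))^4$ is separable for $1<p<\infty$ and reflexive for $1<p<\infty$, as recalled immediately after Remark \ref{rem:sobolevembeddings}, both properties transfer to $\mathfrak{V}_A^p(G;\hc)$.

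For the final assertion, I would specialize to $p=2$: $L_2(G;\hc)$ is a separable complex Hilbert space with the inner product $\langle u,v\rangle_{L_2(G;\hc)}$ introduced earlier, and a closed complex-linear subspace of a Hilbert space is itself a Hilbert space under the restricted inner product. There is essentially no obstacle here: the only subtlety, which the earlier proposition on closedness of $\mathbf{D}-\mathbf{Q}_A$ has already handled, is verifying that being a kernel of an \emph{unbounded} differential operator still yields a closed subset of $L_p(G;\hc)$, rather than merely a closed subset of the graph-norm domain $\mathfrak{D}_p(G;\hc)$.
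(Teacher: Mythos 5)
Your proposal is correct and follows essentially the same route as the paper: both identify $\mathfrak{V}_A^p(G;\hc)$ as the null space of the closed operator $\mathbf{D}-\mathbf{Q}_A$ from Proposition \ref{prop:operadorDQisclosed}, conclude closedness in $L_p(G;\hc)$, and then transfer separability and reflexivity from $L_p(G;\hc)$ to the closed subspace (the paper cites Propositions 3.21 and 3.25 of Brezis). Your explicit unwinding of the closed-operator argument with the trivial sequence $(\mathbf{D}-\mathbf{Q}_A)w_n\equiv 0$ is just a spelled-out version of what the paper leaves implicit.
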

\begin{proof}
	The space $\mathfrak{V}_A^p(G;\hc)$ is closed in $L_p(G;\hc)$ (and consequently, it is a Banach space) since it is the null space of the closed operator $\mathbf{D}-\mathbf{Q}_A$. Since for $1<p<\infty$, the space $L_p(G;\hc)$ is separable and reflexive, hence $\mathfrak{V}_A^p(G;\hc)$ is also separable and reflexive (see \cite{brezis}, Propositions 3.21 and 3.25). 
\end{proof}

\section{Regularity of the solutions}

In this section, we focus on analyzing the regularity of the solutions of 
\begin{equation}\label{eq:Vekuanonhomo}
	(\mathbf{D}-\mathbf{Q}_A)w=v,\quad \mbox{ with } v\in L_p(G;\hc), \; \frac{3}{2}<p<\infty.
\end{equation}
\begin{proposition}\label{prop:estimatenorm}
	Let $w\in \mathfrak{D}_p(G;\hc)$ be a solution of \eqref{eq:Vekuanonhomo}. For every $D\Subset G$, there exists a constant $C_D>0$ (which does not depend on $w$ or $v$) such that
	\begin{equation}\label{ineq:estimatenorm1}
		\|w|_D\|_{W^{1,p}(D;\hc)}\leqslant C_D\left(\|v\|_{L_p(G;\hc)}+\|w\|_{L_p(G;\hc)}\right).
	\end{equation}
\end{proposition}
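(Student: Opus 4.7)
The plan is to use a cutoff argument combined with the mapping properties of the Theodorescu transform to localize the estimate. Starting point: Proposition \ref{prop:differentiabilityofsol}(ii) already ensures $w\in W^{1,p}_{loc}(G;\hc)$, so the only real task is to extract a \emph{quantitative} local bound.

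First I would fix an intermediate domain $D'$ with Liapunov boundary (for instance a finite union of open balls) such that $D\Subset D'\Subset G$, and pick a scalar-valued cutoff $\eta\in C_0^{\infty}(D')$ with $\eta\equiv 1$ on $D$ and $0\leqslant \eta\leqslant 1$. Set $\widetilde{w}:=\eta w$. Since $w\in W^{1,p}(D';\hc)$ by Proposition \ref{prop:differentiabilityofsol}(ii), and $\eta$ is smooth with compact support in $D'$, the product $\widetilde{w}$ lies in $W_0^{1,p}(D';\hc)$.

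Next I would compute $\mathbf{D}\widetilde{w}$ via the Leibniz rule \eqref{eq:leibnizrule}. Since $\eta$ is scalar, its vector components $\eta_k$ for $k=1,2,3$ vanish and $\overline{\eta}=\eta$, so that the rule collapses to
\[
\mathbf{D}(\eta w)=(\mathbf{D}\eta)w+\eta\mathbf{D}w=(\mathbf{D}\eta)w+\eta\bigl(\mathbf{Q}_Aw+v\bigr),
\]
using that $w$ solves \eqref{eq:Vekuanonhomo}. By Proposition \ref{prop:integralop}(iii), the fact that $\widetilde{w}\in W_0^{1,p}(D';\hc)$ gives the representation $\widetilde{w}=\mathbf{T}_{D'}\mathbf{D}\widetilde{w}$, while Proposition \ref{prop:integralop}(i) provides $\mathbf{T}_{D'}\in \mathcal{B}(L_p(D';\hc),W^{1,p}(D';\hc))$. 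Combining these,
\[
\|\widetilde{w}\|_{W^{1,p}(D';\hc)}\leqslant \|\mathbf{T}_{D'}\|\cdot \|(\mathbf{D}\eta)w+\eta\mathbf{Q}_Aw+\eta v\|_{L_p(D';\hc)}.
\]

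Finally I would bound the right-hand side using \eqref{normproduct}, the fact that $\mathbf{D}\eta$ and $\eta$ are uniformly bounded (depending only on $D$ and $D'$), and the boundedness of $\mathbf{Q}_A$ on $L_p(G;\hc)$ given by its definition \eqref{eq:opQ} and the assumption $a_j\in L_\infty$. Since $\widetilde{w}=w$ on $D$, we have $\|w\|_{W^{1,p}(D;\hc)}\leqslant \|\widetilde{w}\|_{W^{1,p}(D';\hc)}$, yielding a constant $C_D$ independent of $w,v$ such that \eqref{ineq:estimatenorm1} holds.

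I do not anticipate a serious obstacle: the only care is in ensuring the cutoff identity for $\mathbf{D}(\eta w)$ follows cleanly from \eqref{eq:leibnizrule} (which it does because $\eta$ is scalar, killing both the correction sum and the conjugation), and that $D'$ is chosen with sufficiently smooth boundary for Proposition \ref{prop:integralop} to apply; both are routine choices. The structural idea is simply that the $\mathbf{T}_G$-based representation of compactly supported Sobolev functions converts the pointwise PDE into an $L_p$-to-$W^{1,p}$ regularity statement, exactly as in the Calder\'on--Zygmund-type arguments alluded to in the introduction.
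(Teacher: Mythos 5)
Your proposal is correct and follows essentially the same route as the paper's proof: a scalar cutoff $\eta$, the Leibniz rule \eqref{eq:leibnizrule}, the Borel--Pompeiu representation $\eta w=\mathbf{T}\,\mathbf{D}(\eta w)$ for the compactly supported product, and the $L_p\to W^{1,p}$ boundedness of the Theodorescu transform. The only cosmetic difference is that you apply the Leibniz rule with the scalar factor first, which makes the conjugation and the correction sum vanish, whereas the paper keeps $w$ as the first factor and carries the terms $\overline{w}\mathbf{D}\eta-2\sum_{k}w_k\frac{\partial \eta}{\partial x_k}$; since $\eta$ is scalar the two expressions coincide.
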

\begin{proof}
	Let $U$ another open set with $D\Subset U\Subset G$, and $\eta \in C_0^{\infty}(G)$ be a scalar cut-off function satisfying: $\eta \equiv 1$ in $D$ and $\operatorname{supp}\eta \subset U$. By Proposition \ref{prop:differentiabilityofsol}, $u=w\eta \in W_0^{1,p}(G)$. Thus, by formulas \ref{eq:borelpompeiu} and \ref{eq:leibnizrule}, the following relation holds a.e. in $G$:
	\begin{align*}
		u & =\mathbf{T}_G\mathbf{D}(w\eta)=\mathbf{T}_D\left[(\mathbf{D}u)\eta +\overline{u}\mathbf{D}\eta -2\sum_{k=1}^{3}u\frac{\partial \eta}{\partial x_k}\right]\\
		&= \mathbf{T}_D\left[(\mathbf{Q}_Aw+v)\eta +\overline{u}\mathbf{D}\eta -2\sum_{k=1}^{3}u\frac{\partial \eta}{\partial x_k}\right]
	\end{align*}
	In particular, for a.e. $x\in D$, $u=w$. Taking $M_G=\|\mathbf{T}\|_{\mathcal{B}(L_p(G;\hc),W^{1,p}(G;\hc) }$, we get
	\begin{align*}
		\|w\|_{W^{1,p}(D;\hc)}  \leqslant & M_G\Big{(}\left(\|\mathbf{Q}_A\|_{\mathcal{B}(L_p(G;\hc))}\|w\|_{L_p(G;\hc)}+\|v\|_{L_p(G;\hc)}\right) \|\eta\|_{L_{\infty}(G)} \\
	&	+ \sqrt{2}\|w\|_{L_p(G;\hc)} \|\mathbf{D}\eta\|_{L_{\infty}(G;\hc)}+2\|w\|_{L_p(G;\hc)}\|\eta\|_{W^{1,\infty}(G)}\Big{)}
	\end{align*}
Thus, we obtain \eqref{ineq:estimatenorm1} taking $C_D=M_G\max\{\|\mathbf{Q}_A\|_{\mathcal{B}(L_p(G;\hc))},2\}\|\eta\|_{W^{1,\infty}(G)}$.
\end{proof}
\begin{proposition}
	Let $w\in \mathfrak{V}_A^p(G;\hc)$ and $v\in L_q(G;\hc)$ satisfying \eqref{eq:Vekuanonhomo}, where $q\geqslant p$ and $q>3$. The following statements hold.
	\begin{itemize}
		\item[(i)] If $p<3$, then $w\in C^{0,1-\frac{3}{r}}_{bloc}(G;\hc)$ for all $3<r<\min\{p^*,q\}$.
		\item[(ii)] If $p\geqslant 3$, then  $w\in C^{0,1-\frac{3}{r}}_{bloc}(G;\hc)$ for all $3<r\leqslant q$.
	\end{itemize}
\end{proposition}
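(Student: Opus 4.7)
The plan is a bootstrap argument that alternates between the Sobolev embeddings of Remark \ref{rem:sobolevembeddings} and the $L_p$-to-$W^{1,p}$ regularity gain provided by $\mathbf{T}_G$ as a right inverse of $\mathbf{D}$ (Proposition \ref{prop:integralop}(i)). The starting point is Proposition \ref{prop:differentiabilityofsol}(ii), which already yields $w\in W^{1,p}_{loc}(G;\hc)$, since $w\in L_p$ and $\mathbf{D}w = \mathbf{Q}_Aw + v\in L_p(G;\hc)$.

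The second step is to use Sobolev embeddings to upgrade the local integrability of $w$. In case (i), with $p<3$, the embedding $W^{1,p}_{loc}\hookrightarrow L_{s,loc}$ holds for every $s<p^*$, so for any $r$ chosen as in the statement one has $w|_D\in L_r$ on every ball $D\Subset G$. In case (ii), either $p=3$ and $W^{1,3}_{loc}\hookrightarrow L_{s,loc}$ for every finite $s$, or $p>3$ and $w$ is already locally bounded by the Morrey-type embedding; in either subcase $w\in L_{r,loc}$ for any $3<r\leqslant q$. Since $\mathbf{Q}_A$ preserves $L_r$ (the $a_j$ are essentially bounded) and $v\in L_q\subset L_{r,loc}$, one concludes $\mathbf{D}w=\mathbf{Q}_Aw+v\in L_{r,loc}(G;\hc)$.

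The third step promotes this to $w\in W^{1,r}_{loc}(G;\hc)$. For $D\Subset G$ I would fix a scalar cutoff $\eta\in C_0^{\infty}(G)$ with $\eta\equiv 1$ on $D$, and set $u=w\eta\in W^{1,p}_0(G;\hc)$. The Leibniz rule \eqref{eq:leibnizrule}, used exactly as in the proof of Proposition \ref{prop:estimatenorm}, expresses $\mathbf{D}u$ as $\eta(\mathbf{Q}_Aw+v)$ plus products of $w$, $\overline{w}$ with derivatives of $\eta$. All of these now lie in $L_r(G;\hc)$, hence $\mathbf{D}u\in L_r(G;\hc)$. By Proposition \ref{prop:integralop}(iii), $u=\mathbf{T}_G\mathbf{D}u$, and Proposition \ref{prop:integralop}(i) then gives $u\in W^{1,r}(G;\hc)$; in particular $w|_D\in W^{1,r}(D;\hc)$. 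Finally, since $r>3$, the embedding $W^{1,r}(D;\hc)\hookrightarrow C^{0,1-3/r}(\overline{D};\hc)$ from Remark \ref{rem:sobolevembeddings} delivers $w\in C^{0,1-3/r}_{bloc}(G;\hc)$.

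No single step is genuinely deep; the delicate point is the bookkeeping on $r$. One must simultaneously satisfy $r>3$ (for the target Hölder embedding), $r\leqslant q$ (so that $v\in L_{r,loc}$), and, when $p<3$, the strict condition $r<p^*$ coming from the Sobolev step, which is exactly the reason the admissible range shrinks to $3<r<\min\{p^*,q\}$ in case (i) while expanding to $3<r\leqslant q$ in case (ii). A secondary technicality is that the bootstrap is genuinely local: one must reapply the cutoff construction on a slightly larger intermediate open set $D\Subset U\Subset G$ so that $u=w\eta$ is globally in $W^{1,p}_0$ and the Borel--Pompeiu formula applies, mirroring the nested-domain device already used in Proposition \ref{prop:estimatenorm}.
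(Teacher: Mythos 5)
Your proposal is correct and follows essentially the same route as the paper: both arguments first upgrade $\mathbf{Q}_Aw+v$ to $L_{r,loc}$ with $r>3$ via the local $W^{1,p}$ regularity and the Sobolev embeddings, and then invoke the smoothing of the Theodorescu transform on $L_r$, $r>3$, to obtain the H\"older estimate, with identical bookkeeping on $r$. The only cosmetic difference is the localization device: the paper restricts to a nested ball and writes $w|_{B'}=g_{B'}+\mathbf{T}_{B'}\left(\mathbf{Q}_Aw|_{B'}+v|_{B'}\right)$ with $g_{B'}$ monogenic (hence smooth), applying $\mathbf{T}_{B'}\in \mathcal{B}(L_r,C^{0,1-\frac{3}{r}})$ directly, whereas you use a cutoff, the Leibniz rule and the Borel--Pompeiu formula and then pass through $W^{1,r}$ and the Morrey embedding --- both of which deliver the same conclusion.
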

\begin{proof}
	Let $B\Subset G$ be a ball, and take another ball $B'$ with $B\Subset B'\Subset G$. By Proposition \ref{prop:differentiabilityofsol}(ii), $w|_{B'}\in W^{1,p}(B')$. Additionally, since $w|_{B'}$ is a solution of \eqref{eq:Vekuanonhomo} in $B'$, by relation \eqref{eq:relationSD} and Remark \ref{remark:weakderi}(v), the restriction $w|_{B'}$ has the form
	\begin{equation}\label{eq:aux1}
		w|_{B'}=g_{B'}+\mathbf{T}_{B'}\left(\mathbf{Q}_Aw|_{B'}+v|_{B'}\right), \quad \mbox{ where }\; g_{B'}\in \mathfrak{M}^p(B';\hc).
	\end{equation}
Since $g_{B'}\in C^{\infty}(B';\hc)$, it remains only to analyze the integral $\mathbf{T}_{B'}\left(\mathbf{Q}_Aw|_{B'}+v|_{B'}\right)$.
\begin{itemize}
	\item[(i)]   Since $p>\frac{3}{2}$, we have $p^*>3$. Remark \ref{rem:sobolevembeddings} implies that $\mathbf{Q}_Aw|_{B'}+v|_{B'}\in L_r(B';\hc)$ for $3<r<\min\{p^*,q\}$. By Proposition \ref{prop:integralop}(iv), $\mathbf{T}_{B'}\left(\mathbf{Q}_Aw|_{B'}+v|_{B'}\right)\in C^{0,1-\frac{3}{r}}(\overline{B'};\hc)$. Hence $w|_{B}=(w|_{B'})|_B\in C^{0,1-\frac{3}{r}}(\overline{B};\hc)$. Since $B$ was arbitrary, we conclude (i).
	\item[(ii)] In this case, Remark \ref{rem:sobolevembeddings} implies $\mathbf{Q}_Aw|_{B'}+v|_{B'}\in L_r(B';\mathbb{B})$ for $3<r\leqslant q$. According to Proposition \ref{prop:integralop}(iv), $\mathbf{T}_{B'}\left(\mathbf{Q}_Aw|_{B'}+v|_{B'}\right)\in C^{0,1-\frac{3}{r}}(\overline{B'};\hc)$, and hence $w|_{B}=(w|_{B'})|_B\in C^{0,1-\frac{3}{r}}(\overline{B};\hc)$.  Due to the arbitrariness of $B$, we conclude (ii).
\end{itemize}
\end{proof}

Taking $v\equiv 0$ we obtain the regularity for the solutions of \eqref{eq:Vekua1}.
\begin{corollary}\label{coro:regularityofsol}
	For $\frac{3}{2}<p<\infty$, the following statements hold.
	\begin{itemize}
		\item[(i)] If $p<3$, then $\mathfrak{V}_A^p(G;\hc) \subset C^{0,1-\frac{3}{r}}_{bloc}(G;\hc)$ for all $3<r<p^*$.
		\item[(ii)] If $p\geqslant 3$, then $\mathfrak{V}_A^p(G;\hc) \subset C^{0,1-\frac{3}{r}}_{bloc}(G;\hc)$ for all $r>3$.
	\end{itemize}
\end{corollary}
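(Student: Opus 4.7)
The proof is an immediate specialization of the preceding proposition, and I would present it as such. The plan is to observe that the zero function $v\equiv 0$ lies in $L_q(G;\hc)$ for every exponent $q\in (1,\infty)$, so the compatibility hypothesis ``$q\geqslant p$ and $q>3$'' on $v$ in the previous proposition is satisfied for $q$ arbitrarily large. Consequently, any $w\in \mathfrak{V}_A^p(G;\hc)$ is a solution of \eqref{eq:Vekuanonhomo} with $v\equiv 0$ falling under the proposition's scope with $q$ as large as we wish.

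For part (i), where $\frac{3}{2}<p<3$, the previous proposition delivers $w\in C^{0,1-\frac{3}{r}}_{bloc}(G;\hc)$ for all $3<r<\min\{p^*,q\}$. Letting $q\to\infty$ makes $\min\{p^*,q\}=p^*$ for all sufficiently large $q$, so the admissible range for $r$ collapses to $3<r<p^*$, which is exactly the claim. For part (ii), where $p\geqslant 3$, the proposition yields the H\"older inclusion for all $3<r\leqslant q$, and again, allowing $q$ to be chosen arbitrarily large removes the upper constraint, giving the conclusion for every $r>3$.

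There is no real obstacle here; the work was already done in establishing the non-homogeneous regularity result. The only subtle point worth mentioning in the write-up is that we are applying the previous proposition separately for each candidate exponent $r$ (picking a fresh $q>r$ each time), rather than a single global $q$, so that every admissible $r$ is reached. Because the H\"older classes $C^{0,1-\frac{3}{r}}_{bloc}(G;\hc)$ are nested appropriately, this pointwise-in-$r$ application suffices to obtain the stated inclusions.
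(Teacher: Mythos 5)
Your proposal is correct and matches the paper's own argument, which simply specializes the preceding proposition to $v\equiv 0$ (with the same observation that $0\in L_q(G;\hc)$ for every $q$, so the constraint $\min\{p^*,q\}$ reduces to $p^*$ and the bound $r\leqslant q$ disappears). The remark about choosing $q$ afresh for each $r$ is a harmless clarification of what the paper leaves implicit.
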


In particular, $\mathfrak{V}_A^p(G;\hc)\subset C(G;\hc)$ and for every $x\in G$, the evaluation map\\ $\mathfrak{V}_A^p(G;\hc)\ni w\mapsto w(x)\in \hc$ is well defined.

\begin{remark}\label{remark:bergmanspacemono}
	According to \cite[Prop. 2.2]{gurlebeckquater}, for every compact $K\subset G$, there exists $C_K>0$ such that
	\begin{equation*}
		\max_{x\in K}|u(x)|_{\hc}\leqslant C_K\|u\|_{L_p(G;\hc)},\quad \forall u\in \mathfrak{M}^p(G;\hc).
	\end{equation*}
This property generalizes the classical analytic Bergman spaces to monogenic functions.
\end{remark}

\begin{theorem}\label{th:boundednessofevalfunct}
	Suppose $\frac{3}{2}<p<\infty$. For every compact $K\subset G$, there exists $C_K>0$ such that
	\begin{equation}\label{ineq:boundednessofevalfunct}
		\max_{x\in K}|w(x)|_{\hc}\leqslant C_K\|w\|_{L_p(G;\hc)},\quad \forall w\in \mathfrak{V}_A^p(G;\hc).
	\end{equation}
\end{theorem}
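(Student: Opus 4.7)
The plan is to combine three ingredients already established: the Hölder estimate for the Theodorescu operator (Proposition \ref{prop:integralop}(iv)), the Bergman-type bound for monogenic functions (Remark \ref{remark:bergmanspacemono}), and the characterization $w\in \mathfrak{V}_A^p(G;\hc)\iff \mathbf{S}_G^Aw\in\mathfrak{M}^p(G;\hc)$ (Corollary \ref{cor:vekuaiffmonogenic}). The idea is to localize onto a ball where a single bootstrap via Sobolev embedding upgrades the integrability of $\mathbf{Q}_Aw$ enough to push $\mathbf{T}_{B'}\mathbf{Q}_Aw$ into a space of continuous functions, while the remaining monogenic piece is controlled by the Bergman bound.

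The main steps will go as follows. First, since $K$ is compact, I cover it by finitely many concentric ball pairs $B\Subset B'\Subset B''\Subset G$ with uniform radii; it suffices to prove the estimate on each ball $B$. Second, I apply Proposition \ref{prop:estimatenorm} on $B''$ (with $v\equiv 0$) to obtain $\|w\|_{W^{1,p}(B'';\hc)}\leqslant C\|w\|_{L_p(G;\hc)}$. Third, I choose $q>3$ so that $W^{1,p}(B'';\hc)\hookrightarrow L_q(B'';\hc)$: this is possible by Remark \ref{rem:sobolevembeddings} precisely because $p>\frac{3}{2}$ forces $p^{*}>3$ in the subcritical regime $p<3$, while for $p\geqslant 3$ any $q>3$ will do (and for $p>3$ one could shortcut directly, but this unified choice keeps the argument clean). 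Composing, $\|w\|_{L_q(B';\hc)}\leqslant C\|w\|_{L_p(G;\hc)}$, and since $\mathbf{Q}_A\in\mathcal{B}(L_q)$, also $\|\mathbf{Q}_Aw\|_{L_q(B';\hc)}\leqslant C\|w\|_{L_p(G;\hc)}$.

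Fourth, I restrict \eqref{eq:Vekua1} to $B'$: by Corollary \ref{cor:vekuaiffmonogenic} applied on $B'$ we have the decomposition
\[
w|_{B'}=h+\mathbf{T}_{B'}\mathbf{Q}_A w|_{B'},\qquad h\in\mathfrak{M}(B';\hc).
\]
For the Theodorescu term, Proposition \ref{prop:integralop}(iv) applied on $B'$ (with exponent $q>3$) yields
\[
\max_{x\in\overline{B'}}|\mathbf{T}_{B'}\mathbf{Q}_A w|_{B'}(x)|_{\hc}\leqslant C\|\mathbf{Q}_A w|_{B'}\|_{L_q(B';\hc)}\leqslant C'\|w\|_{L_p(G;\hc)}.
\]
For the monogenic term, the triangle inequality in $L_p(B';\hc)$, together with the $L_p$-boundedness of $\mathbf{T}_{B'}\mathbf{Q}_A$, gives $\|h\|_{L_p(B';\hc)}\leqslant C\|w\|_{L_p(G;\hc)}$, so $h\in\mathfrak{M}^p(B';\hc)$, and Remark \ref{remark:bergmanspacemono} applied to the compact $\overline{B}\subset B'$ supplies the uniform bound $\max_{x\in\overline{B}}|h(x)|_{\hc}\leqslant C_B\|h\|_{L_p(B';\hc)}$. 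Adding the two estimates and taking the maximum of the resulting constants over the finite cover of $K$ produces \eqref{ineq:boundednessofevalfunct}.

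The main obstacle is ensuring that a single Sobolev bootstrap from $L_p$ into some $L_q$ with $q>3$ is available uniformly across the entire range $\frac{3}{2}<p<\infty$; this is exactly why the hypothesis $p>\frac{3}{2}$ is imposed, since it is equivalent to $p^{*}>3$ and thus permits choosing $3<q<p^{*}$ in Remark \ref{rem:sobolevembeddings}. Once that bootstrap is secured, everything else is a bookkeeping exercise on constants, since the quantitative Hölder bound of Proposition \ref{prop:integralop}(iv) and the Bergman-type bound of Remark \ref{remark:bergmanspacemono} are already in hand.
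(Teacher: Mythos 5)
Your argument is correct and follows essentially the same route as the paper: localize to nested balls, split $w|_{B'}$ into the monogenic piece $\mathbf{S}_{B'}^Aw|_{B'}$ (controlled by Remark \ref{remark:bergmanspacemono}) plus $\mathbf{T}_{B'}\mathbf{Q}_Aw|_{B'}$, and use Proposition \ref{prop:estimatenorm} together with the Sobolev bootstrap into $L_q$ with $q>3$ (available precisely because $p>\frac{3}{2}$) before covering $K$ by finitely many balls. The only cosmetic difference is that you bound the Theodorescu term directly via the mapping property $\mathbf{T}_{B'}\in\mathcal{B}(L_q,C^{0,1-\frac{3}{q}})$ of Proposition \ref{prop:integralop}(iv), whereas the paper routes it through $W^{1,r}(B';\hc)\hookrightarrow C^{0,1-\frac{3}{r}}(\overline{B'};\hc)$; both chains yield the same estimate.
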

\begin{proof}
	First, consider the case when $K=\overline{B}$, with $B\subset G$ being a ball. Consider another ball $B'$ with $B\Subset B'\Subset G$, and write $w|_{B'}=u+\mathbf{T}_{B'}\mathbf{Q}_Aw|_{B'}$, where $u=\mathbf{S}_{B'}^Aw|_{B'}\in \mathfrak{M}^p(B';\hc)$ (by Corollary \ref{cor:vekuaiffmonogenic}). By Remark  \ref{remark:bergmanspacemono}, there is $C_{\overline{B}}^1$ such that
	\[
	\max_{x\in \overline{B}}|u(x)|_{\hc}\leqslant C_{\overline{B}}^1\|u\|_{L_p(B')}\leqslant C_{\overline{B}}^1\|\mathbf{S}_{B'}^A\|_{\mathcal{B}(L_p(B';\hc))}\|w\|_{L_p(G;\hc)}.
	\]
	Now, by Remark \ref{rem:sobolevembeddings} and Corollary \ref{coro:regularityofsol}, there exists $r>3$ such that $w|_{B'}\in L_r(B';\hc)$, and the embeddings
	\[
	W^{1,p}(B';\hc) \hookrightarrow L_r(B';\hc) \quad \mbox{and  } W^{1,r}(B';\hc) \hookrightarrow C^{0,1-\frac{3}{r}}(\overline{B'};\hc)
	\]
	are compact. Let $C_{B'}^1$ and $C_{B'}^2$ be their corresponding norms. By Proposition \ref{prop:estimatenorm}, there exists a constant $C_{B'}^3>0$ (independent of $w$) such that $\|w|_{B'}\|_{W^{1,p}(B';\hc)}\leqslant C_{B'}^3\|w\|_{L_p(G;\hc)}$. Hence, we have that  $v=\mathbf{T}_{B'}\mathbf{Q}_Aw|_{B'}\in C^{0,1-\frac{3}{r}}(\overline{B'};\hc)$. Taking $M_{B'}=\|\mathbf{T}_{B'}\mathbf{Q}_A\|_{\mathcal{B}(L_r(G;\hc)),W^{1,r}(G;\hc)}$, we obtain
	\begin{align*}
		\max_{x\in \overline{B}}|v(x)|_{\hc} & \leqslant \|v\|_{C^{0,1-\frac{3}{r}}(B';\hc)} \leqslant C_{B'}^2\|v\|_{W^{1,r}(B',\hc)}\leqslant C_{B'}^2M_{B'}\|w|_{B'}\|_{L_r(B';\hc)}\\
		&\leqslant C_{B'}^2M_{B'}C_{B'}^1\|w|_{B'}\|_{W^{1,p}(B';\hc)}\leqslant C_{B'}^2M_{B'}C_{B'}^1C_{B'}^3\|w\|_{L_p(G;\hc)}.
	\end{align*}
Choosing $C_{\overline{B}}=\max\{C_{\overline{B}}^1\|\mathbf{S}_{B'}^A\|_{\mathcal{B}(L_p(B';\hc))},C_{B'}^2M_{B'}C_{B'}^1C_{B'}^3\}$, we conclude that $\max\limits_{x\in \overline{B}}|w(x)|_{\hc}\leqslant C_{\overline{B}}\|w\|_{L_p(G;\hc)}$. For a general compact subset $K$, take $B_1,\dots, B_N$ open balls with $K\subset \bigcup_{j=1}^N\overline{B}_j\subset G$. Therefore, inequality \eqref{ineq:boundednessofevalfunct} is satisfied by taking $C_K=\displaystyle \max_{1\leqslant j\leqslant N}C_{\overline{B}_j}$.
\end{proof}

\section{Existence of the reproducing kernel}

In this section, we focus on the case $p=2$. Theorem \ref{th:boundednessofevalfunct} implies that for each $k=0,1,2,3$, and every $x\in G$, the functional $\mathfrak{V}_A^2(G;\hc)\ni w\mapsto w_k(x)\in \mathbb{C}$ is bounded. By the Riesz representation theorem, there exists $K_x^k\in \mathfrak{V}_A^2(G;\hc)$ satisfying
\begin{equation}\label{eq:reproker1}
	w_k(x)=\langle K_x^k,w\rangle_{L_2(G;\hc)}, \quad \forall w\in \mathfrak{V}_A^2(G;\hc).
\end{equation}
Let us denote
\begin{equation}\label{eq:Kcomponents}
	K_x^k(t)=\sum_{j=0}^3K_x^{k,j}(t)e_j,
\end{equation}
  where $K_x^{j,k}\in L_2(G)$ are the component functions of $K_x^k$.

\begin{remark}\label{remark:matrixK}
	For $k,j=0,1,2, 3$ and $x,t\in G$, relation \eqref{eq:reproker1} implies
	\begin{equation}\label{eq:Khermitian}
			K_x^{k,j}(t)= \langle K_t^{j},K_x^{k}\rangle_{L_2(G;\hc)}=\langle K_x^{k},K_t^{j}\rangle_{L_2(G;\hc)}^*= (K_t^{j,k}(x))^*.
	\end{equation}
	When $x=t$, $k=j$, we obtain $K^{k,k}_x(x)=\|K_x^k\|_{L_2(G;\hc)}^2>0$. In particular, the matrix $(K_x^{k,j}(x))_{j,k=0}^{3}$ is Hermitian with a positive diagonal.
\end{remark}

Given $w\in \mathfrak{V}_A^2(G;\hc)$, we have the representation $w(x)=\sum_{k=0}^{3}\langle K_x^{k},w\rangle_{L_2(G;\hc)}e_k$. Developing the inner product and using \eqref{eq:Khermitian} we obtain
\begin{align*}
	w(x) =& \sum_{k=0}^{3}\left[\int_G\left(\sum_{j=0}^3(K_x^{k,j}(t))^*w_j(t)\right)dV_t\right] e_k = \int_G\left(\sum_{k=0}^{3}\sum_{j=0}^{3}K_t^{j,k}(x)w_j(t)e_k\right)dV_t\\
	=& \int_G\left[\sum_{j=0}^{3}\left(\sum_{k=0}^{3}K_t^{j,k}(x)e_k\right)w_j(t)\right]dV_t=\int_G\left(\sum_{j=0}^{3}K_t^j(x)w_j(t)\right)dV_t,
\end{align*}
where the last equality follows from \eqref{eq:Kcomponents}. Following  \cite{camposbergman,mineVekua2}, we introduce the  definition for the Bergman kernel.
\begin{definition}\label{def:Bergmankernel}
	The {\bf Vekua-Bergman kernel} for the space $\mathfrak{V}_A^2(G;\hc)$ with coefficient $a\in \hc$ is defined as
	\begin{equation}\label{eq:Bergmankernel}
		\mathscr{K}_G^A(x,t;a) := \sum_{j=0}^3K_t^{j}(x)a_j, \quad x,t\in G.
	\end{equation}
\end{definition}
\begin{remark}
	\begin{itemize}
		\item[(i)] $\mathscr{K}_G^A(\cdot,t;a)\in \mathfrak{V}_A^2(G;\hc)$ for all $t\in G$, $a\in \hc$.
		\item[(ii)] The following reproducing property holds
         \begin{equation}\label{eq:reprodproeprtyvekua}
         	w(x)=\int_G\mathscr{K}_G^A(x,t;w(t))dV_t, \quad \forall w\in \mathfrak{V}_A^2(G;\hc), \; x\in G.
         \end{equation}	
	\end{itemize}
\end{remark}
Since $\mathfrak{V}_A^2(G;\hc)$ is separable, it admits a countable orthonormal basis $\{\varphi_n\}_{n=0}^{\infty}$ \cite[Ch. I, Prop. 4.16]{conwayfunctional}. Since $\mathscr{K}_G^A(\cdot,t;a)\in \mathfrak{V}_A^2(G;\hc)$ for all $x\in G$, $a\in \hc$, it possesses a Fourier series in terms of $\{\varphi_n\}_{n=0}^{\infty}$, and the form of its Fourier coefficients can be obtained in terms of the coefficient $a$.

\begin{proposition}
	For every $t\in G$, $a\in \hc$, the kernel $\mathscr{K}_G^A(x,t;a)$ admits the Fourier series expansion
	\begin{equation}\label{eq:bergmankernelexpansion}
		\mathscr{K}_G^A(x,t;a)\sum_{n=0}^{\infty}\langle  \varphi_n(t), a\rangle_{\hc}\varphi_n(x).
	\end{equation}
	The series converges in the variable $x$ in the $L_2(G;\hc)$-norm and uniformly on compact subsets of $G$.
\end{proposition}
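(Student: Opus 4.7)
The plan is to treat the $L_2$-expansion and the uniform convergence as two separate steps, with the bulk of the work being the identification of the Fourier coefficients.

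First I would fix $t\in G$ and $a\in \hc$, and appeal to the standard fact that $\mathscr{K}_G^A(\cdot,t;a)\in \mathfrak{V}_A^2(G;\hc)$ (noted in the remark immediately above the statement) admits a Fourier expansion
\[
\mathscr{K}_G^A(\cdot,t;a)=\sum_{n=0}^{\infty}c_n(t,a)\,\varphi_n,\qquad c_n(t,a)=\langle \varphi_n,\mathscr{K}_G^A(\cdot,t;a)\rangle_{L_2(G;\hc)},
\]
converging in the $L_2$-norm (the variable in this inner product being $x$). This part is free from the structure of $\{\varphi_n\}$.

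Next I would compute $c_n(t,a)$ explicitly. Using the definition \eqref{eq:Bergmankernel} and that each $a_j\in \mathbb{C}$ commutes with the $\hc$-inner product in the right slot, I write
\[
c_n(t,a)=\sum_{j=0}^{3}a_j\,\langle \varphi_n,K_t^{j}\rangle_{L_2(G;\hc)}.
\]
By \eqref{eq:reproker1} applied to $w=\varphi_n$ and the conjugate-symmetry of the inner product, $\langle \varphi_n,K_t^{j}\rangle_{L_2(G;\hc)}=\overline{\langle K_t^{j},\varphi_n\rangle_{L_2(G;\hc)}}^{\,*}=((\varphi_n)_j(t))^{*}$. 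Substituting back and recognising the resulting sum as $\sc(\varphi_n(t)^{\dagger}a)$ yields $c_n(t,a)=\langle \varphi_n(t),a\rangle_{\hc}$, which is exactly the claim.

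Finally, for the uniform convergence on compact sets, I would observe that the partial sums $S_N(x):=\sum_{n=0}^{N}\langle \varphi_n(t),a\rangle_{\hc}\varphi_n(x)$ all lie in $\mathfrak{V}_A^2(G;\hc)$, and the difference $S_N-\mathscr{K}_G^A(\cdot,t;a)$ belongs to the same space. Applying Theorem \ref{th:boundednessofevalfunct} to this difference gives, for any compact $K\subset G$,
\[
\max_{x\in K}\bigl|S_N(x)-\mathscr{K}_G^A(x,t;a)\bigr|_{\hc}\leqslant C_K\bigl\|S_N-\mathscr{K}_G^A(\cdot,t;a)\bigr\|_{L_2(G;\hc)}\longrightarrow 0,
\]
since we already have $L_2$-convergence. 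The main (mild) obstacle is the Fourier-coefficient computation: one must correctly track the conjugate-linearity convention in the first slot of $\langle\cdot,\cdot\rangle_{\hc}$, and exploit that the multiplier $a_j$ is a complex scalar, so that it commutes past the scalar-part operation used to define the $\hc$-inner product; everything else is bookkeeping plus a direct appeal to Theorem \ref{th:boundednessofevalfunct}.
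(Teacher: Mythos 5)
Your proposal is correct and follows essentially the same route as the paper: expand $\mathscr{K}_G^A(\cdot,t;a)$ in the orthonormal basis, identify the coefficients via \eqref{eq:reproker1} and the conjugate-symmetry of the inner product (your placement of the scalar $a_j$ on the left is immaterial since $a_j\in\mathbb{C}$ commutes), and upgrade $L_2$-convergence to locally uniform convergence by applying Theorem \ref{th:boundednessofevalfunct} to the tails. The only difference is that you spell out the last step, which the paper leaves as a one-line appeal to that theorem.
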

\begin{proof}
	The Fourier series of $\mathscr{K}_G^A(x,t;a)$ is given by $\mathscr{K}_G^A(x,t;a)=\sum_{n=0}^{\infty}\langle \varphi_n,\mathscr{K}_G^A(\cdot,t;a)\rangle_{L_2(G;\hc)}\varphi_n(x)$ and converges in $x$ with respect to the $L_2(G;\hc)$-norm. The Fourier coefficients have the form
	\begin{align*}
		\langle \varphi_n, \mathscr{K}_G^A(\cdot,t;a)\rangle_{L_2(G;\hc)}  &= \left\langle \varphi_n,\sum_{k=0}^{3}K_t^ka_k\right\rangle_{L_2(G;\hc)}=\sum_{k=0}^{3}\langle \varphi_n,K_t^k\rangle_{L_2(G;\hc)}a_k\\
		&= \sum_{k=0}^{3}\varphi_{n,k}^*(t)a_k=\langle \varphi_n(t),a\rangle_{\hc},
	\end{align*}
from which we obtain \eqref{eq:bergmankernelexpansion}. The uniform convergence on compact subsets of $G$ is due to Theorem \ref{th:boundednessofevalfunct}.
\end{proof}

Since $\mathfrak{V}_A^2(G;\hc)$ is a closed subspace of $L_2(G;\hc)$, there exists the bounded orthogonal projection of $L_2(G;\hc)$ onto $\mathfrak{V}_A^2(G;\hc)$ \cite[Ch. I, Th. 2.7]{conwayfunctional}, denoted as $\mathbf{P}_{G}^A$. We refer to it as the {\it Vekua-Bergman projection}. Similar to the case of the  bicomplex Vekua equation \cite[Prop. 26]{mineVekua2}, the Vekua-Bergman projection can be expressed in terms of the kernel $\mathscr{K}_G^A(x,t;a)$. 

\begin{proposition}\label{prop:Bergmanproj}
	The Vekua-Bergman projection can be written as
	\begin{equation}\label{eq:Bergmanprojection}
		\mathbf{P}_G^Au(x)= \int_G\mathscr{K}_G^A(x,t;u(t))dV_t, \qquad \forall u \in L_2(G;\hc).
	\end{equation}
\end{proposition}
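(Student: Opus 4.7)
The plan is to reduce the claim to the reproducing property \eqref{eq:reprodproeprtyvekua} together with the orthogonal decomposition $L_2(G;\hc)=\mathfrak{V}_A^2(G;\hc)\oplus \mathfrak{V}_A^2(G;\hc)^{\perp}$. Let $\mathbf{R}u(x)$ denote the right-hand side of \eqref{eq:Bergmanprojection}. The core computation will be to identify the scalar components of $\mathbf{R}u(x)$ as the inner products $\langle K_x^k,u\rangle_{L_2(G;\hc)}$ that define the reproducing kernel.

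First, I would expand the kernel using \eqref{eq:Bergmankernel} and \eqref{eq:Kcomponents} to get
\[
\mathbf{R}u(x)=\sum_{k=0}^{3}e_k\sum_{j=0}^{3}\int_G K_t^{j,k}(x)\,u_j(t)\,dV_t.
\]
Applying the Hermitian identity $K_t^{j,k}(x)=(K_x^{k,j}(t))^{*}$ from \eqref{eq:Khermitian} and recognizing the complex inner product on the four scalar components, the $k$-th scalar part becomes
\[
(\mathbf{R}u(x))_k=\int_G\sum_{j=0}^{3}(K_x^{k,j}(t))^{*}u_j(t)\,dV_t=\langle K_x^k,u\rangle_{L_2(G;\hc)}.
\]
This identification is well-defined since $K_x^k\in L_2(G;\hc)$, so the integrand is integrable by Cauchy--Schwarz.

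Next, I would invoke the orthogonal decomposition: write $u=\mathbf{P}_G^Au+u^{\perp}$ with $u^{\perp}\in \mathfrak{V}_A^2(G;\hc)^{\perp}$. Since $K_x^k\in \mathfrak{V}_A^2(G;\hc)$ for every $x\in G$ and every $k=0,1,2,3$, we have $\langle K_x^k,u^{\perp}\rangle_{L_2(G;\hc)}=0$, and therefore
\[
(\mathbf{R}u(x))_k=\langle K_x^k,\mathbf{P}_G^Au\rangle_{L_2(G;\hc)}=(\mathbf{P}_G^Au)_k(x),
\]
where the last equality is the defining property \eqref{eq:reproker1} of $K_x^k$ applied to the Bergman-space element $\mathbf{P}_G^Au$. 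Summing over $k$ gives $\mathbf{R}u(x)=\mathbf{P}_G^Au(x)$, which is the desired formula.

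I do not anticipate a serious obstacle: the only delicate point is the bookkeeping in the first paragraph, making sure the quaternionic-scalar multiplications, complex conjugations, and the definition of $\langle\cdot,\cdot\rangle_{L_2(G;\hc)}$ all align so that $(\mathbf{R}u(x))_k$ genuinely equals $\langle K_x^k,u\rangle_{L_2(G;\hc)}$. Once that identification is in place, the remainder of the argument is a one-line consequence of the orthogonal decomposition and the reproducing property. Notice that one may equivalently avoid the component-wise computation by decomposing $u$ first, applying \eqref{eq:reprodproeprtyvekua} to $\mathbf{P}_G^Au$ to get $\mathbf{R}\mathbf{P}_G^Au=\mathbf{P}_G^Au$, and then checking $\mathbf{R}u^{\perp}=0$ via the same component-wise identification---this is the same proof organized differently.
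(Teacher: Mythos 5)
Your proposal is correct and follows essentially the same route as the paper: both first identify the components of $\int_G\mathscr{K}_G^A(x,t;u(t))dV_t$ with $\langle K_x^k,u\rangle_{L_2(G;\hc)}$ and then exploit the fact that $K_x^k\in\mathfrak{V}_A^2(G;\hc)$ to replace $u$ by $\mathbf{P}_G^Au$ in the inner product before applying the reproducing property \eqref{eq:reproker1}. The only cosmetic difference is that you phrase the middle step via the orthogonal decomposition $u=\mathbf{P}_G^Au+u^{\perp}$, while the paper writes the equivalent identity $\langle K_x^k,\mathbf{P}_G^Au\rangle=\langle\mathbf{P}_G^AK_x^k,u\rangle=\langle K_x^k,u\rangle$ using the self-adjointness of the orthogonal projection.
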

\begin{proof}
	Let $u \in L_2(G;\hc)$. A direct computation shows that
	\[
	\int_G\mathscr{K}_G^A(x,t;u(t))dV_t=\sum_{k=0}^{3}\langle K_x^k,u\rangle_{L_2(G;\hc)}e_k.
	\]
	Since $\mathbf{P}_G^Au\in \mathfrak{V}_A^2(G;\hc)$, using \eqref{eq:reproker1} and the fact that an orthogonal projection is a self-adjoin operator \cite[Prop. 3.3]{conwayfunctional}, we get
	\begin{align*}
		\mathbf{P}_G^Au(x) &= \sum_{k=0}^{3}\langle K_x^k, \mathbf{P}_G^Au\rangle_{L_2(G;\hc)}e_k=\sum_{k=0}^{3}\langle \mathbf{P}_G^AK_x^k, u\rangle_{L_2(G;\hc)}e_k\\
		& =\sum_{k=0}^{3}\langle K_x^k, u\rangle_{L_2(G;\hc)}e_k=\int_G\mathscr{K}_G^A(x,t;u(t))dV_t.
	\end{align*}
\end{proof}

The space $L_2(G;\hc)$ can be treated as a right $\hc$-Hilbert space with the $\hc$-inner product
\begin{equation}\label{eq:hcinnerproduct}
	\langle \langle u|v\rangle \rangle _{L_2(G;\hc)}=\int_G u^{\dagger}v, \quad u,v\in L_2(G;\hc).
\end{equation}
The product \eqref{eq:hcinnerproduct} is right $\hc$-linear in the second slot and satisfies $\langle \langle u|v\rangle \rangle_{L_2(G;\hc)}= \langle \langle v|u \rangle \rangle _{L_2(G;\hc)}^{\dagger}$. Hence $\langle \langle ua|v\rangle \rangle = a^{\dagger} \langle \langle u|v\rangle \rangle_{L_2(G;\hc)}$ for all $a\in \hc$. Note that $\langle u,v \rangle_{L_2(G;\hc)}= \sc \langle \langle u|v\rangle \rangle_{L_2(G;\hc)}$.

The Bergman space $\mathfrak{V}_A^2(G;\hc)$ is not always a right $\hc$-module. For example,  $\mathfrak{V}_{(0,0,a_3,0)}^2(G;\hc)$ is a right $\hc$-module, but even for $A=(\vec{a},0,0,0)$ with $\vec{a}$ a nonzero constant vector, $\mathfrak{V}_A^2(G;\hc)$ is not a right $\hc$-module. 

\begin{theorem}
	If $\mathfrak{V}_A^2(G;\hc)$ is a right $\hc$-module, then there exists a kernel $\mathscr{K}_G^A(x,t)$ with $\mathscr{K}_G^A(\cdot,t)\in \mathfrak{V}_A^2(G;\hc)$ for all $t\in G$, satisfying 
	\begin{equation}\label{eq:Bergmankernel2}
		w(x)=\int_G \mathscr{K}_G^A(x,t)w(t)dt,\quad \forall w\in \mathfrak{V}_A^2(G;\hc),\; x\in G.
	\end{equation}
Conversely, if such a kernel exists, then $\mathfrak{V}_A^2(G;\hc)$ is a right $\hc$-module.
\end{theorem}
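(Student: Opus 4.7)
I would treat the two implications separately. The forward direction reduces cleanly to a Riesz uniqueness argument exploiting the right $\hc$-module hypothesis, while the backward direction is more delicate and contains the main obstacle.

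For the forward implication, my plan is to set $\mathscr{K}_G^A(x,t):=K_t^0(x)$, which lies in $\mathfrak{V}_A^2(G;\hc)$ as a function of $x$ by \eqref{eq:reproker1}. Since the complex components $a_j$ of $a=\sum_j a_j e_j$ commute with the quaternionic units, the Bergman kernel \eqref{eq:Bergmankernel} factors as $\mathscr{K}_G^A(x,t;a)=K_t^0(x)\cdot a$ provided the identifications $K_t^k=K_t^0 e_k$ hold in $\mathfrak{V}_A^2(G;\hc)$ for $k=1,2,3$. I would prove these by Riesz uniqueness: the right-module hypothesis gives both $K_t^0 e_k$ and $we_k$ in $\mathfrak{V}_A^2$, and a short calculation using $(pq)^\dagger=q^\dagger p^\dagger$, $e_k^\dagger=-e_k$, and the cyclicity of $\sc$ collapses $\langle K_t^0 e_k,w\rangle_{L_2(G;\hc)}$ to $-\langle K_t^0,we_k\rangle_{L_2(G;\hc)}$, which by the defining property of $K_t^0$ equals $-(we_k)_0(t)$. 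A direct biquaternion product computation yields $(we_k)_0(t)=-w_k(t)$, so the pairing equals $w_k(t)=\langle K_t^k,w\rangle_{L_2(G;\hc)}$, and Riesz uniqueness forces $K_t^k=K_t^0 e_k$. Substituting into \eqref{eq:reprodproeprtyvekua} produces the desired reproducing formula.

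For the backward implication, my plan is, given $w\in\mathfrak{V}_A^2(G;\hc)$ and $a\in\hc$, to right-multiply the reproducing formula for $w$ by $a$ and pass $a$ under the integral, producing $(wa)(x)=\int_G\mathscr{K}_G^A(x,t)(wa)(t)\,dV_t$; thus $wa$ is a fixed point of the integral operator $Tu(x):=\int_G\mathscr{K}_G^A(x,t)u(t)\,dV_t$. The reproducing hypothesis makes $T$ the identity on $\mathfrak{V}_A^2$, and the composition formula $\int_G\mathscr{K}_G^A(x,s)\mathscr{K}_G^A(s,t)\,dV_s=\mathscr{K}_G^A(x,t)$, obtained by applying the reproducing property to $w=\mathscr{K}_G^A(\cdot,t)\in\mathfrak{V}_A^2$, gives $T^2=T$, whence $\mathrm{range}(T)=\mathrm{Fix}(T)\supseteq\mathfrak{V}_A^2(G;\hc)$. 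The main obstacle is the reverse containment $\mathrm{Fix}(T)\subseteq\mathfrak{V}_A^2(G;\hc)$, which would yield $wa=T(wa)\in\mathfrak{V}_A^2$ and hence the right-module property. The cleanest route I can see is to differentiate $Tu$ weakly in $x$, push $\mathbf{D}$ under the integral using $\mathbf{D}\mathscr{K}_G^A(\cdot,t)=\mathbf{Q}_A\mathscr{K}_G^A(\cdot,t)$, and match with $\mathbf{Q}_A Tu$; exchanging $\mathbf{Q}_A$ with the integration in $t$ is exactly where the single-argument kernel hypothesis $\mathscr{K}_G^A(x,t)\in\hc$ (as opposed to the parameter-dependent kernel $\mathscr{K}_G^A(x,t;a)$) feeds in, and once carried out it gives the identification $T=\mathbf{P}_G^A$ of Proposition \ref{prop:Bergmanproj}, completing the argument.
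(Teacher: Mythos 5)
Your forward direction is correct and amounts to the paper's argument in a slightly different packaging: the paper derives $w(x)=\langle\langle K_x^0|w\rangle\rangle_{L_2(G;\hc)}$ by a direct quaternionic computation (via the identity $a+\sum_{k=1}^{3}e_kae_k=-2\overline{a}$), whereas you obtain the equivalent identities $K_t^k=K_t^0e_k$ by Riesz uniqueness; this is legitimate precisely because the right-module hypothesis places both $we_k$ and $K_t^0e_k$ inside $\mathfrak{V}_A^2(G;\hc)$, and your computation $\langle K_t^0e_k,w\rangle_{L_2(G;\hc)}=-\langle K_t^0,we_k\rangle_{L_2(G;\hc)}=w_k(t)$ is sound. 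Either way one lands on $\mathscr{K}_G^A(x,t)=K_t^0(x)=(K_x^0(t))^{\dagger}$ and formula \eqref{eq:Bergmankernel2}.

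The backward direction has a genuine gap. Your reduction to $\operatorname{Fix}(T)\subset\mathfrak{V}_A^2(G;\hc)$ (equivalently $\operatorname{range}(T)\subset\mathfrak{V}_A^2(G;\hc)$, by the idempotence you establish) targets the right statement, but the mechanism you propose --- pushing $\mathbf{D}-\mathbf{Q}_A$ under the integral onto the first argument of the kernel --- fails on the $\mathbf{Q}_A$ part. Three of the four terms of $\mathbf{Q}_A$ involve right multiplication or quaternionic conjugation, and these do not pass through the integration in $t$: for instance
\[
(Tu)(x)\,a_1(x)=\int_G\mathscr{K}_G^A(x,t)\,u(t)\,a_1(x)\,dV_t
\quad\mbox{vs.}\quad
\int_G\bigl[M^{a_1}\mathscr{K}_G^A(\cdot,t)\bigr](x)\,u(t)\,dV_t=\int_G\mathscr{K}_G^A(x,t)\,a_1(x)\,u(t)\,dV_t,
\]
and $u(t)$, $a_1(x)$ need not commute in $\hc$; likewise $\overline{(Tu)(x)}=\int_G\overline{u(t)}\;\overline{\mathscr{K}_G^A(x,t)}\,dV_t$, which is not $\int_G\overline{\mathscr{K}_G^A(x,t)}\,u(t)\,dV_t$. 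This noncommutativity is exactly why $\mathfrak{V}_A^2(G;\hc)$ fails to be a right $\hc$-module in general, so no argument that treats $u(t)$ as transparent to $\mathbf{Q}_A$ can succeed. The paper reaches the identification $T=\mathbf{P}_G^A$ without differentiating at all: from the reproducing hypothesis it first gets $\mathscr{K}_G^A(x,t)=(K_x^0(t))^{\dagger}$, then uses $\langle K_x^0,we_k\rangle_{L_2(G;\hc)}=\sc\bigl(\langle\langle K_x^0|w\rangle\rangle_{L_2(G;\hc)}e_k\bigr)=-w_k(x)$ together with $\langle K_x^0,we_k\rangle_{L_2(G;\hc)}=-\langle K_x^0e_k,w\rangle_{L_2(G;\hc)}$ to conclude $K_x^0e_k=K_x^k$, hence $\mathscr{K}_G^A(x,t;a)=\mathscr{K}_G^A(x,t)a$ and $T=\mathbf{P}_G^A$ by Proposition \ref{prop:Bergmanproj}; then $wa=T(wa)=\mathbf{P}_G^A(wa)$ lies in the range of the projection. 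You should replace the differentiation step by this inner-product identification.
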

\begin{proof}
	Fix $x\in G$ and let $w\in \mathfrak{V}^2_A(G;\hc)$ with component functions $\{w_k\}_{k=0}^{3}\subset L_2(G)$. Note that $\operatorname{Sc}(w(x)e_k)=-w_k(x)$, hence
	\[
	w(x)= w_0(x)-\sum_{k=1}^3\operatorname{Sc}(w(x)e_k)e_k.
	\]
Let $K_x^0(t)$ be the kernel satisfying $w_0(x)=\langle K_x^0,w\rangle_{L_2(G;\hc)}$. Using the equality $\overline{\langle \langle u|v\rangle \rangle_{L_2(G;\hc)}}=\langle \langle v|u\rangle \rangle_{L_2(G;\hc)}^*$, we obtain 
\begin{align*}
	w(x)  = & \langle K_x^0,w\rangle_{L_2(G;\hc)}-\sum_{k=1}^{3}\langle K_x^0,we_k\rangle_{L_2(G;\hc)}e_k\\
	 = & \frac{1}{2}\left(\langle\langle K_x^0|w\rangle\rangle_{L_2(G;\hc)}+\overline{\langle\langle K_x^0|w\rangle\rangle_{L_2(G;\hc)}} \right)\\
	& -\frac{1}{2}´\sum_{k=1}^{3}\left(\langle\langle K_x^0|we_k\rangle\rangle_{L_2(G;\hc)}+
	\overline{\langle\langle K_x^0|we_k\rangle\rangle_{L_2(G;\hc)}} \right)e_k\\
	= & \frac{1}{2}\left(\langle\langle K_x^0|w\rangle\rangle_{L_2(G;\hc)}+\langle\langle w|K_x^0\rangle\rangle_{L_2(G;\hc)}^* \right)\\
	& -\frac{1}{2}´\sum_{k=1}^{3}\left(\langle\langle K_x^0|we_k\rangle\rangle_{L_2(G;\hc)}+
	\langle\langle we_k|K_x^0\rangle\rangle_{L_2(G;\hc)}^* \right)e_k\\
	=& \frac{1}{2}\left(\langle\langle K_x^0|w\rangle\rangle_{L_2(G;\hc)}+\langle\langle w|K_x^0\rangle\rangle_{L_2(G;\hc)}^* \right)+\frac{1}{2}\sum_{k=1}^{3}\langle\langle K_x^0|w\rangle\rangle_{L_2(G;\hc)}\\
	& +\frac{1}{2}\sum_{k=1}^3e_k \langle\langle w|K_x^0\rangle\rangle_{L_2(G;\hc)}^*e_k\\
	=& 2\langle\langle K_x^0|w\rangle\rangle_{L_2(G;\hc)}+\frac{1}{2}\langle\langle w|K_x^0\rangle\rangle_{L_2(G;\hc)}^*+\frac{1}{2}\sum_{k=1}^3e_k \langle\langle w|K_x^0\rangle\rangle_{L_2(G;\hc)}^*e_k.
\end{align*}	
A direct computation shows that $\sum_{k=1}^{3}e_kae_k=-4a_0+a$ for every $a\in \hc$. Consequently, $a+\sum_{k=1}^{3}e_kae_k=-2\overline{a}$. Thus,
\begin{align*}
	w(x)= & 2\langle\langle K_x^0|w\rangle\rangle_{L_2(G;\hc)}-\overline{\langle\langle w|K_x^0\rangle\rangle_{L_2(G;\hc)}^*}\\
	=& 2\langle\langle K_x^0|w\rangle\rangle_{L_2(G;\hc)}-\langle\langle w|K_x^0\rangle\rangle_{L_2(G;\hc)}^{\dagger}\\
	=& \langle\langle K_x^0|w\rangle\rangle_{L_2(G;\hc)}.
\end{align*}
Note that $K_x^0(t)=\langle\langle K_t^0|K_x^0\rangle \rangle_{L_2(G;\hc)}=(K_t^0(x))^{\dagger}$. Thus, $\mathscr{K}_G^A(x,t)=(K_x^0(t))^{\dagger}=K_t^0(x)$ satisfies \eqref{eq:Bergmankernel2}. 

Reciprocally, suppose that $\mathscr{K}_G^A(x,t)$ satisfies \eqref{eq:Bergmankernel2}. Let $K_x^k(t)$ be such that $w_k(x)=\langle K_x^k,w\rangle_{L_2(G;\hc)}$, $k=0,1,2,3$.  Note that $\langle (\mathscr{K}_G^A(x,\cdot))^{\dagger},w\rangle_{L_2(G;\hc)}= w_0(x)=\langle K_x^0,w\rangle_{L_2(G;\hc)}$ for all $w\in \mathfrak{V}_A^2(G;\hc)$ and hence, $\mathscr{K}_G^A(x,t)=(K_x^0(t))^{\dagger}$. On the other hand,
\[
\langle K_x^0,we_k\rangle_{L_2(G;\hc)}= \sc (\langle\langle K_x^0|w\rangle\rangle_{L_2(G;\hc)}e_k)=-w_k(x)=-\langle K_x^k,w\rangle_{L_2(G;\hc)}.
\]
A direc computation shows that $\langle K_x^0,we_k\rangle_{L_2(G;\hc)}=-\langle K_x^0 e_k,w\rangle_{L_2(G;\hc)}$. Hence, $\langle K_x^0e_k,w\rangle_{L_2(G;\hc)}=\langle K_x^k,w\rangle_{L_2(G;\hc)}$ for all $w\in \mathfrak{V}_A^2(G;\hc)$, from which we conclude that $K_x^0(t)e_k=K_x^k(t)$ for all $x,t\in G$, $k=0,1,2,3$. Consequently,
\[
\mathscr{K}_G^A(x,t;a)=\sum_{k=0}^{3}K_t^k(x)a_k=K_t^0(x)a_1+\sum_{k=1}^{3}K_t^{0}(x)e_ka_k= K_t^0(x)a.
\]
Repeating the same procedure as in the first part, we find  $K_x^0(t)=(K_t^0(x))^{\dagger}$. Thus, from \eqref{eq:Bergmankernel} we obtain that 
\[
\mathscr{K}_G^A(x,t;a)= \mathscr{K}_G^A(x,t)a.
\]
In consequence, the orthogonal projection $\mathbf{P}_G^A$ has the form
\[
\mathbf{P}_G^Au(x)=\int_{G} \mathscr{K}_G^A(x,t)u(t)dV_t,\quad \forall u\in L_2(G;\hc).
\]
Finally, taking $w\in \mathfrak{V}_A^2(G;\hc)$ and $a\in \hc$, we get
\[
w(x)a=(\mathbf{P}_G^A[w](x))a=\int_{G}\mathscr{K}_G^A(x,t)w(t)adV_t=\mathbf{P}_G^A[Wa](x).
\]
Hence, $wa\in \mathfrak{V}_A^2(G;\hc)$. Therefore, $\mathfrak{V}_A^2(G;\hc)$ is a right $\hc$-module.
\end{proof}

Consequently, Bergman spaces associated with operators of the type $\mathbf{D}\pm M^{a_1(x)}$, $\mathbf{D}\pm M^{a_2(x)}\mathbf{C}_{\hc}$ and $\mathbf{D}\pm a_4(x)\mathbf{C}_{\hc}$, generally do not admit a kernel of type \eqref{eq:Bergmankernel2}.

\section{Annihilators and orthogonal decompositions}
Let $X$ be a Banach space and $X^{\star}$ its dual. The duality pairing between $X$ and $X^{\star}$ is usually denoted by $(\varphi|x)_X:=\varphi(x)$, $x\in X, \varphi\in X^{\star}$. Given a densely defined operator $\mathbf{A}:\operatorname{dom}(\mathbf{A})\subset X\rightarrow X$, its transpose is the operator $\mathbf{A}^T: \operatorname{dom}(\mathbf{A})\subset X^{\star} \rightarrow X^{\star}$, whose domain consists of the functionals $\varphi\in X^{\star}$ for which there exists $\psi\in X^{\star}$ satisfying $(\varphi|\mathbf{A}x)_X=(\psi|x)_X$ for all $x\in \operatorname{dom}(\mathbf{A})$ \cite[Sec. 2.6]{brezis}. Such $\psi$ is unique, and we define $\mathbf{A}^T\varphi=\psi$. The operator $\mathbf{A}^T$ is closed in $X^{\star}$ \cite[Prop. 2.17]{brezis}.

\begin{remark}\label{remark:transposesumandproduct}
	Let $\mathbf{A}:\operatorname{dom}(\mathbf{A})\subset X\rightarrow X$ be a densely defined operator, and $\mathbf{S}\in \mathcal{B}(X)$. The following relations hold:
	\[
	\mathbf{S}^T\mathbf{A}^T\subset (\mathbf{AS})^T\quad \mbox{and } (\mathbf{A}+\mathbf{S})^T=\mathbf{A}^T+\mathbf{S}^T.
	\]
	The proof is a straightforward.
\end{remark}

Using the Riesz-Fisher representation theorem, it is a straightforward to show that for $1<p<\infty$ and every $\Phi \in \left(L_p(G;\hc)\right)^{\star}$, there exists a unique (a.e. in $G$) $v\in L_{p'}(G;\hc)$ such that $\Phi(u)= \sc\int_G \overline{v}u$ for all $u\in L_p(G;\hc)$. The duality pairing between $L_{p'}(G;\hc)$ and $L_p(G;\hc)$ is denoted by $(v|u)_G := \sc \int_G\overline{v}u$. Consequently,  the transpose of a densely defined operator $\mathbf{A}: \operatorname{dom}(\mathbf{A})\subset L_p(G;\hc)\rightarrow L_p(G;\hc)$ can be realized as an operator $\mathbf{A}^T: \operatorname{dom}(\mathbf{A}^T) \subset L_{p'}(G;\hc) \rightarrow L_{p'}(G;\hc)$ whose domain $\operatorname{dom}(\mathbf{A}^T)$ consists of the elements $v\in L_{p'}(G;\hc)$ for which there exists  $w\in L_{p'}(G;\hc)$ satisfying $(v|\mathbf{A}u)_G=(w|u)_G$ for all $u\in \mathscr{D}(\mathbf{A})$, and its actions is given by $\mathbf{A}^Tv=w$. Such operator is closed in $L_{p'}(G;\hc)$.

\begin{proposition}\label{prop:adjointsofsomeop}Let $1<p<\infty$. The following statements hold.
	\begin{itemize}
		\item[(i)] $\mathbf{T}_G^T= \mathbf{T}_G$ (as an operator acting in $L_{p'}(G;\hc)$).
		\item[(ii)] If $A\in (L_{\infty}(G;\hc))^4$, then $\mathbf{Q}_A^T=\mathbf{Q}_{A^{\star}}$, where $A^{\star}:=(\overline{a_1},a_4,\overline{a_3},a_2)$.    
	\end{itemize}
\end{proposition}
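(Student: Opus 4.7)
The plan is to prove both parts by direct computation against the duality pairing $(v|u)_G = \sc \int_G \overline{v}\,u$.

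For (i), I compute $(v|\mathbf{T}_G u)_G$ by inserting the integral representation of $\mathbf{T}_G$ and invoking Fubini to swap the order of integration. The computation hinges on two elementary properties of the Cauchy kernel $E$: it is purely vectorial, so $\overline{E(x)}=-E(x)$, and it is odd, i.e.\ $E(-x)=-E(x)$. Together these give $\overline{E(x-y)} = E(y-x)$, which is exactly what is needed to reassemble the inner integral (after swapping) as $\overline{\mathbf{T}_G v(y)}$, yielding $(v|\mathbf{T}_G u)_G = (\mathbf{T}_G v|u)_G$. The boundedness $\mathbf{T}_G \in \mathcal{B}(L_{p'}(G;\hc))$, a consequence of Proposition \ref{prop:integralop}(i) together with the inclusion $W^{1,p'}(G;\hc) \hookrightarrow L_{p'}(G;\hc)$ for bounded $G$, guarantees that $\operatorname{dom}(\mathbf{T}_G^T) = L_{p'}(G;\hc)$.

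For (ii), I handle the four summands of $\mathbf{Q}_A u = u a_1 + \overline{u} a_2 + a_3 u + a_4 \overline{u}$ one at a time. The algebraic tools are: (a) the cyclic property $\sc(pq) = \sc(qp)$; (b) the conjugation invariance $\sc(p) = \sc(\overline{p})$, which allows factors of $\overline{u}$ to be turned into $u$ at the cost of conjugating everything else; and (c) the reversal rule $\overline{pq} = \overline{q}\,\overline{p}$. Combining (a) and (c) on the first summand yields $\sc(\overline{v}\,u\,a_1) = \sc(\overline{v\overline{a_1}}\,u)$, which contributes $v\overline{a_1}$ to $\mathbf{Q}_A^T v$. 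The summand $\sc(\overline{v}\,a_3\,u)$ is already in the target form $\sc(\overline{w}\,u)$ and contributes $\overline{a_3}\,v$. For the two summands containing $\overline{u}$, applying (b) first and then (a), (c), I get $\sc(\overline{v}\,\overline{u}\,a_2) = \sc(\overline{a_2\overline{v}}\,u)$, contributing $a_2\overline{v}$, and $\sc(\overline{v}\,a_4\,\overline{u}) = \sc(\overline{\overline{v}\,a_4}\,u)$, contributing $\overline{v}\,a_4$. Summing the four contributions yields $\mathbf{Q}_A^T v = v\overline{a_1} + \overline{v}\,a_4 + \overline{a_3}\,v + a_2\overline{v} = \mathbf{Q}_{A^{\star}} v$, with $\operatorname{dom}(\mathbf{Q}_A^T)=L_{p'}(G;\hc)$ since $\mathbf{Q}_A \in \mathcal{B}(L_p(G;\hc))$.

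The only real obstacle is the non-commutative bookkeeping; no deeper analytic input is required beyond Fubini and the $L_p$-boundedness of $\mathbf{T}_G$ and $\mathbf{Q}_A$. As a consistency check, $(A^{\star})^{\star}=(a_1,a_2,a_3,a_4)=A$, which agrees with the general identity $(\mathbf{Q}_A^T)^T=\mathbf{Q}_A$ valid in the reflexive range $1<p<\infty$.
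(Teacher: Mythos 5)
Your proposal is correct and follows essentially the same route as the paper: for (i) a Fubini swap combined with the identity $\overline{E(x-y)}=E(y-x)$ (the paper writes this out with the explicit kernel and passes through $C_0^{\infty}$ density, whereas you may apply Fubini directly since Tonelli plus the uniform bound $\int_G|E(y-x)|\,dV_y\leqslant \operatorname{diam}(G)$ justifies it on all of $L_p\times L_{p'}$), and for (ii) the same scalar-part identities $\sc(pq)=\sc(qp)=\sc(\overline{pq})$ applied term by term, which the paper organizes instead as transposes of the elementary operators $M^a$, $a\mathbf{I}$, $\mathbf{C}_{\hc}$ and their compositions. All four contributions you compute agree with the paper's $\mathbf{Q}_{A^{\star}}$ with $A^{\star}=(\overline{a_1},a_4,\overline{a_3},a_2)$.
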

\begin{proof}
	\begin{itemize}
		\item[(i)] Take $\varphi,\psi\in C_0^{\infty}(G;\hc)$. Consider
		\begin{align*}
			(\psi|\mathbf{T}_G\varphi)_G & = \int_{G}\overline{\psi(x)}\mathbf{T}_G\varphi(x)dV_x= \frac{1}{4\pi}\int_G\overline{\psi(x)}\left[\int_G\frac{\vec{y}-\vec{x}}{|\vec{y}-\vec{x}|^3}\varphi(y)dV_y\right]dV_x\\
			&= \frac{1}{4\pi}\int_{G}\left[\int_G\frac{\overline{\psi(x)}(\vec{y}-\vec{x})}{|\vec{y}-\vec{x}|^3}dV_x\right]\varphi(y)dV_y=\frac{1}{4\pi}\int_{G}\overline{\left[\int_G\frac{(\vec{x}-\vec{y})\psi(x)}{|\vec{x}-\vec{y}|^3}dV_x\right]}\varphi(y)dV_y
		\end{align*}
	From this, $(\psi|\mathbf{T}_G\varphi)_G=(\mathbf{T}_G\psi|\varphi)_G$. By the continuity of $\mathbf{T}_G$ and the density of $C_0^{\infty}(G;\hc)$ in $L_p(G;\hc)$ and $L_{p'}(G;\hc)$, we obtain $\mathbf{T}_G^T=\mathbf{T}_G:L_{p'}(G;\hc)\rightarrow L_{p'}(G;\hc)$.
	\item[(ii)] Let $a\in L_{\infty}(G;\hc)$ and $u\in L_p(G;\hc),v\in L_{p'}(G;\hc)$. Using the fact that $\sc(pq)=\sc(qp)=\sc(\overline{pq})$, we obtain
	\begin{align*}
		(v|ua)_G&=\sc \int_{G} \overline{v}(ua)= \sc \int_G a(\overline{v}u)=\sc \int_G \overline{(v\overline{a})}u=(v\overline{a}|v)_G\\
		(v|au)_G&=\sc \int_G \overline{v}(au)= \sc \int_G \overline{(\overline{a}v)}u=(\overline{a}v|u)_G,\\
		(v|\overline{u})_G&= \sc\int_G \overline{v}\;\overline{u}=\sc \int_G \overline{uv}= \sc \int_G uv =\sc \int_G vu= (\overline{v}|u)_G.
	\end{align*}
Consequently, $(M^a)^T=M^{\overline{a}}$, $(a\mathbf{I})^T=\overline{a}\mathbf{I}$, and $\mathbf{C}_{\hc}^T=\mathbf{C}$. In the same way, $(M^a\mathbf{C}_{\hc})^Tv=\mathbf{C}_{\hc}^T(M^a)^Tv=\mathbf{C}_{\hc}(v\overline{a})=a\overline{v}$ and $(a\mathbf{C}_{\hc})^Tv=\mathbf{C}_{\hc}(\overline{a}v)=\overline{v}a$. Thus, $(M^a\mathbf{C}_{\hc})^T=a\mathbf{C}_{\hc}$ and $(a\mathbf{C}_{\hc})^T=M^a\mathbf{C}_{\hc}$. From these relations, we conclude that
\[
\mathbf{Q}_A^T=(M^{a_1}+M^{a_2}\mathbf{C}_{\hc}+a_3\mathbf{I}+a_4\mathbf{C}_{\hc})^T=M^{\overline{a_1}}+a_2\mathbf{C}_{\hc}+\overline{a_3}\mathbf{I}+M^{a_4}\mathbf{C}_{\hc}=\mathbf{Q}_{A^{\star}},
\]
 where $A^{\star}=(\overline{a_1},a_4,\overline{a_3},a_2)$.
	\end{itemize}
\end{proof}
\begin{proposition}\label{prop:transposeD}
	Let $1<p<\infty$. The transpose of the Moisil-Theodorescu operator $\mathbf{D}: \mathfrak{D}_p(G;\hc)\subset L_p(G;\hc) \rightarrow L_p(G;\hc)$ is given by $\mathbf{D}: W_0^{1,p'}(G;\hc)\subset L_{p'}(G;\hc) \rightarrow L_{p'}(G;\hc)$.
\end{proposition}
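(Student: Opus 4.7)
The plan is to establish two inclusions: $W_0^{1,p'}(G;\hc)\subset \operatorname{dom}(\mathbf{D}^T)$ with $\mathbf{D}^T v=\mathbf{D}v$, and the converse $\operatorname{dom}(\mathbf{D}^T)\subset W_0^{1,p'}(G;\hc)$.

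For the forward inclusion I would start from the weak $\mathbf{D}$-derivative definition \eqref{eq:defofweakD}: for $u\in \mathfrak{D}_p(G;\hc)$ and any $\varphi\in C_0^{\infty}(G;\hc)$, $\int_G \overline{u}\mathbf{D}\varphi=\int_G \overline{\mathbf{D}u}\varphi$. Taking quaternionic conjugates of both sides (and using $\overline{ab}=\overline{b}\,\overline{a}$) yields the companion identity $\int_G \overline{\varphi}\mathbf{D}u=\int_G \overline{\mathbf{D}\varphi}u$; applying $\sc$ to both sides then gives $(\varphi|\mathbf{D}u)_G=(\mathbf{D}\varphi|u)_G$. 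For $v\in W_0^{1,p'}(G;\hc)$ I would choose $v_n\in C_0^{\infty}(G;\hc)$ with $v_n\to v$ in $W^{1,p'}$; both sides of this pairing identity (with $\varphi=v_n$) converge since $v_n\to v$ and $\mathbf{D}v_n\to \mathbf{D}v$ in $L_{p'}$. In the limit, $(v|\mathbf{D}u)_G=(\mathbf{D}v|u)_G$ for every $u\in \mathfrak{D}_p(G;\hc)$, placing $v$ in $\operatorname{dom}(\mathbf{D}^T)$ with $\mathbf{D}^T v=\mathbf{D}v$.

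For the reverse inclusion, take $v\in \operatorname{dom}(\mathbf{D}^T)$ with $\mathbf{D}^T v=w\in L_{p'}(G;\hc)$, so that $(v|\mathbf{D}u)_G=(w|u)_G$ for every $u\in \mathfrak{D}_p(G;\hc)$. The first move is to identify $v$ explicitly: choosing $u=\mathbf{T}_G f$ for arbitrary $f\in L_p(G;\hc)$ (which lies in $W^{1,p}\subset \mathfrak{D}_p$ with $\mathbf{D}u=f$ by Proposition \ref{prop:integralop}(i)) and invoking $\mathbf{T}_G^T=\mathbf{T}_G$ from Proposition \ref{prop:adjointsofsomeop}(i) produces $(v|f)_G=(\mathbf{T}_G w|f)_G$. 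Since $f$ is arbitrary, $v=\mathbf{T}_G w\in W^{1,p'}(G;\hc)$ with $\mathbf{D}v=w$. To handle the trace, I would test against $u\in W^{1,p}(G;\hc)\subset \mathfrak{D}_p(G;\hc)$: combining Gauss's theorem \eqref{eq:Gausstheorem} with the relation $\overline{v}\mathbf{D}=-\overline{\mathbf{D}v}$ from \eqref{eq:conjugateD} yields
\[
\int_G \overline{v}\mathbf{D}u\,dV=\int_{\Gamma}\operatorname{tr}_{\Gamma}\overline{v}\,\hat{\nu}\,\operatorname{tr}_{\Gamma}u\,dS+\int_G \overline{\mathbf{D}v}u\,dV,
\]
so the scalar part of the boundary integral vanishes for every $u\in W^{1,p}(G;\hc)$.

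The main obstacle I expect is promoting this scalar-part vanishing to $\operatorname{tr}_{\Gamma}v=0$, since the pairing $(\cdot|\cdot)_G$ captures only the scalar part of quaternionic integrals. The fix is to replace $u$ by $ue_j$ for $j=0,1,2,3$; because $e_j$ is a constant, $\mathbf{D}(ue_j)=(\mathbf{D}u)e_j$, so the argument above applies verbatim and yields $\sc(Ie_j)=0$ for each $j$, where $I:=\int_{\Gamma}\operatorname{tr}_{\Gamma}\overline{v}\,\hat{\nu}\,\operatorname{tr}_{\Gamma}u\,dS$. Since $\sc(Ie_0)=I_0$ and $\sc(Ie_j)=-I_j$ for $j=1,2,3$, this recovers $I=0$ as a full biquaternion. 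Surjectivity of $\operatorname{tr}_{\Gamma}$ onto $W^{1-\frac{1}{p},p}(\Gamma;\hc)$, together with density of the latter in $L_p(\Gamma;\hc)$, then upgrades the identity $\int_{\Gamma}\operatorname{tr}_{\Gamma}\overline{v}\,\hat{\nu}\,\varphi\,dS=0$ to hold for all admissible test $\varphi$; pointwise invertibility of $\hat{\nu}$ (since $\hat{\nu}^2=-1$) forces $\operatorname{tr}_{\Gamma}\overline{v}=0$, hence $\operatorname{tr}_{\Gamma}v=0$, placing $v$ in $W_0^{1,p'}(G;\hc)$.
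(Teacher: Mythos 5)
Your proof is correct, and although it follows the same two-inclusion skeleton as the paper, both halves of the reverse inclusion are executed differently. For interior regularity the paper first upgrades the scalar pairing identity to the full quaternionic identity $\int_G\overline{v}\mathbf{D}u=\int_G\overline{\mathbf{D}^Tv}u$ via the component trick $\operatorname{Sc}(pe_k)=-p_k$, deduces $v\in\mathfrak{D}_{p'}(G;\hc)$ by restricting to $C_0^{\infty}$ test functions, writes $v=\mathbf{T}_G\mathbf{D}v+g$ with $g$ monogenic, and kills $g$ through the operator identity $\mathbf{T}_G\mathbf{D}^T\subset(\mathbf{D}\mathbf{T}_G)^T=\mathbf{I}$; your direct identification $v=\mathbf{T}_Gw$ obtained by testing against $u=\mathbf{T}_Gf$ and using $\mathbf{T}_G^T=\mathbf{T}_G$ is essentially the same computation carried out pointwise rather than at the operator level, and is if anything cleaner (the pairing $(\cdot|\cdot)_G$ does separate points of $L_{p'}(G;\hc)$, so the conclusion $v=\mathbf{T}_Gw$ is legitimate). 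The real divergence is at the boundary. The paper stops at the identity $\int_{\Gamma}\overline{\psi}\hat{\nu}\operatorname{tr}_{\Gamma}v\,dS=0$ and invokes Lemma \ref{lema:integral} --- a Plemelj--Sokhotski/Cauchy-operator argument --- to split $v=v_0+g$ with $v_0\in W_0^{1,p'}(G;\hc)$ and $g$ monogenic, and then needs a further density argument (testing $g$ against $\mathbf{D}\mathbf{T}_G\varphi$) to show $g=0$. You instead deduce $\operatorname{tr}_{\Gamma}v=0$ outright, using surjectivity of the trace map, density of $W^{1-\frac{1}{p},p}(\Gamma;\hc)$ in $L_p(\Gamma;\hc)$, and the pointwise invertibility $\hat{\nu}^{-1}=-\hat{\nu}$, and then conclude via the characterization of $W_0^{1,p'}(G;\hc)$ as the kernel of the trace operator. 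This bypasses the Cauchy-operator machinery entirely, at the cost of two standard facts the paper never states explicitly (density of the fractional trace space in $L_p(\Gamma;\hc)$ and $W_0^{1,p'}(G;\hc)=\ker\operatorname{tr}_{\Gamma}$, both valid on Liapunov boundaries); it is arguably the more economical route. Your device of testing against $ue_j$ to recover a full quaternionic identity from the scalar-valued pairing is exactly the paper's trick, just deployed one step later in the argument.
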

\begin{proof}
	By the definition of weak $\mathbf{D}$-derivative, it is clear that $W_0^{1,p'}(G;\hc) \subset \operatorname{dom}(\mathbf{D}^T)$ and $\mathbf{D}^Tv= \mathbf{D}v$ for $v\in W_0^{1,p'}(G;\hc)$. On the other hand, suppose $v\in \operatorname{dom}(\mathbf{D}^T)$ and $u\in \mathfrak{D}_p(G;\hc)$. Using the relations $\sc(pe_k)=-p_k$, $k=1,2,3$, and the fact that $\mathfrak{D}_p(G;\hc)$ is a right $\hc$-module, we have
	\begin{align*}
		\int_G\overline{v}\mathbf{D}u & = (v|\mathbf{D}u)_G-\sum_{k=1}^{3}(v|(\mathbf{D}u)e_k)_Ge_k= (\mathbf{D}^Tv|u)_G-\sum_{k=1}^{3}(v|\mathbf{D}(ue_k))_Ge_k\\
		&= (\mathbf{D}^Tv|u)_G-\sum_{k=1}^{3}(\mathbf{D}^Tv|ue_k)_Ge_k= \int_G \overline{\mathbf{D}^Tv}u.
	\end{align*}
In particular, the equality is valid for all $u\in C_0^{\infty}(G;\hc)$, and hence $v\in \mathfrak{D}_{p'}(G;\hc)$ with $\mathbf{D}^Tv=\mathbf{D}v$. Consequently, $v=\mathbf{T}_G\mathbf{D}^Tv+g$, with $g\in \mathfrak{M}^{p'}(G;\hc)$. By Remarks \ref{remark:weakderi}(iii) and  \ref{remark:transposesumandproduct}, and  Proposition \ref{prop:adjointsofsomeop}(i), $\mathbf{T}_G\mathbf{D}^T\subset (\mathbf{D}\mathbf{T}_G)^T=\mathbf{I}$. Thus, $v=v+g$ and $g\equiv 0$. Hence $v=\mathbf{T}_G\mathbf{D}^Tv$ and by  Proposition \ref{prop:integralop}(i), $v\in W^{1,p'}(G;\hc)$. Take $\psi\in W^{1-\frac{1}{p},p}(\Gamma;\hc)$ and $\Psi\in W^{1,p}(G;\hc)$ with $\operatorname{tr}_{\Gamma}\Psi=\psi$. Using the Gauss formula and the previous computation, we get:
\begin{align*}
	\int_G\operatorname{tr}_{\Gamma}\overline{v} \widehat{\nu} \psi dS= \int_G \left(-\overline{\mathbf{D}v}\Psi+\overline{v}\mathbf{D}\Psi\right)=0
\end{align*}
Taking the conjugate, we obtain $\int_G\overline{\psi}\widehat{\nu}\operatorname{tr}_{\Gamma}v dS=0$ for all $\psi\in W^{1-\frac{1}{p},p}(\Gamma;\hc)$. According to Lemma \ref{lema:integral}, $v=v_0+g$, where $v_0\in W_0^{1,p'}(G;\hc)$ and $g\in \mathfrak{M}^{p'}(G;\hc)\cap W^{1,p'}(G;\hc)$. Given $u\in W^{1,p}(G;\hc)$, we obtain
\begin{align*}
	\int_G \overline{(v_0+g)}\mathbf{D}u=\int_G \overline{(\mathbf{D}v_0+\mathbf{D}g)}u = \int_{G}\overline{\mathbf{D}v_0}u=\int_G \overline{v_0}\mathbf{D}u,
\end{align*}
from which we concluded that $\int_{G}\overline{g}\mathbf{D}u=0$ for all $u\in W^{1,p}(G;\hc)$. Taking $\varphi\in C_0^{\infty}(G;\hc)$ arbitrarily and using $u=\mathbf{T}_G\varphi\in W^{1,p}(G;\hc)$ along with Proposition \ref{prop:integralop}(i), we obtain that $\int_G \overline{g}\varphi=0$. Since $\varphi\in C_0^{\infty}(G;\hc)$ was chosen arbitrarily, we conclude that $g=0$ and $v=v_0\in W^{1,p'}_0(G;\hc)$. Therefore $\operatorname{dom}(\mathbf{D}^T)=W_0^{1,p'}(G;\hc)$, as desired.
\end{proof}
\begin{proposition}\label{prop:transposeoperatorDQ}
	For $1<p<\infty$, the transpose of $\mathbf{D}+\mathbf{Q}_A: \mathfrak{D}_p(G;\hc)\subset L_p(G;\hc)\rightarrow L_p(G;\hc)$ is given by $\mathbf{D}-\mathbf{Q}_{A^{\star}}: W_0^{1,p'}(G;\hc) \subset L_{p'}(G;\hc) \rightarrow L_{p'}(G;\hc)$.
	
	For $p=2$, the (Hilbert) adjoint is given by $\mathbf{D}-\mathbf{Q}_{A^{\dagger}}: W_0^{1,2}(G;\hc)\subset L_2(G;\hc)\rightarrow L_2(G;\hc)$, where $A^{\dagger}=(a_1^{\dagger},a_4^{*},a_3^{\dagger}.a_2^{*})$.
\end{proposition}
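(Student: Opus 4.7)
The plan is to build the transpose of $\mathbf{D}+\mathbf{Q}_A$ from the transposes of its summands, exactly as the preceding propositions have been organized to enable. Since $\mathbf{Q}_A\in\mathcal{B}(L_p(G;\hc))$, Remark \ref{remark:transposesumandproduct} gives $(\mathbf{D}+\mathbf{Q}_A)^T=\mathbf{D}^T+\mathbf{Q}_A^T$ with $\operatorname{dom}((\mathbf{D}+\mathbf{Q}_A)^T)=\operatorname{dom}(\mathbf{D}^T)$. Proposition \ref{prop:transposeD} identifies $\mathbf{D}^T$ with $\mathbf{D}$ acting on $W_0^{1,p'}(G;\hc)$, and Proposition \ref{prop:adjointsofsomeop}(ii) gives $\mathbf{Q}_A^T=\mathbf{Q}_{A^\star}$. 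Combining these two inputs yields the first assertion; the domain $W_0^{1,p'}(G;\hc)$ is inherited from Proposition \ref{prop:transposeD}, so no additional density or boundary-trace argument is needed for the $L_{p'}$ statement.

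For the Hilbert adjoint at $p=2$, the plan is to redo the calculation of Proposition \ref{prop:adjointsofsomeop}(ii), but replacing the duality bracket $\sc\int_G\bar{u}v$ with the $L_2$-inner product $\langle u,v\rangle_{L_2(G;\hc)}=\sc\int_G u^\dagger v$. The essential tools are the antimultiplicativity $(pq)^\dagger=q^\dagger p^\dagger$, the identity $\bar{u}^\dagger=u^*$, and the cyclic invariance $\sc(pq)=\sc(qp)$. These allow computing, for $a\in L_\infty(G;\hc)$, the Hilbert adjoints of the four building blocks $M^{a_1}$, $a_3\mathbf{I}$, $M^{a_2}\mathbf{C}_\hc$, $a_4\mathbf{C}_\hc$ one by one: the first two turn into right and left multiplication by $a_1^\dagger$ and $a_3^\dagger$, while in the conjugation-involving terms the factor $\bar{u}^\dagger=u^*$ swaps the slots of the coefficients (right multiplication becomes left, and vice versa) and introduces a componentwise complex conjugation on the coefficient. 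Summing the four pieces gives $\mathbf{Q}_A^*=\mathbf{Q}_{A^\dagger}$ with $A^\dagger=(a_1^\dagger,a_4^*,a_3^\dagger,a_2^*)$. Because $\mathbf{D}^*=\mathbf{D}$ on $W_0^{1,2}(G;\hc)$ (the Hilbert adjoint counterpart of Proposition \ref{prop:transposeD}, obtained from the same integration-by-parts identity), Remark \ref{remark:transposesumandproduct} closes the argument.

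The main obstacle is the bookkeeping between the three involutions $\overline{\;\cdot\;}$, $\cdot^*$, and $\cdot^\dagger$ in the presence of quaternionic noncommutativity. Each time a factor crosses the integral, one must decide which involution it picks up and in which slot it lands, and the asymmetry between left and right multiplication means each of the four building blocks must be handled separately rather than by a common rule. Once those moves are verified directly by expansion (as in the proof of Proposition \ref{prop:adjointsofsomeop}(ii)), both the transpose and the Hilbert adjoint statements follow immediately by assembling them with $\mathbf{D}^T$ and $\mathbf{D}^*$ via Remark \ref{remark:transposesumandproduct}.
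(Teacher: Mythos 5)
Your argument is essentially identical to the paper's proof, which likewise assembles the transpose from Remark \ref{remark:transposesumandproduct}, Proposition \ref{prop:transposeD}, and Proposition \ref{prop:adjointsofsomeop}(ii), and obtains $\mathbf{Q}_A^*=\mathbf{Q}_{A^{\dagger}}$ by repeating the computation of Proposition \ref{prop:adjointsofsomeop}(ii) with the $\dagger$-inner product. One small caveat: the additivity argument actually yields $(\mathbf{D}+\mathbf{Q}_A)^T=\mathbf{D}+\mathbf{Q}_{A^{\star}}$, equivalently $(\mathbf{D}-\mathbf{Q}_A)^T=\mathbf{D}-\mathbf{Q}_{A^{\star}}$ (the form used in the introduction and in Theorem \ref{th:anihilator}), so the sign pattern in the proposition as printed is a typo that your ``combining yields the first assertion'' silently inherits rather than flags.
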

\begin{proof}
	It follows from Proposition \ref{prop:adjointsofsomeop}(ii), together with the fact that $\mathbf{Q}_A\in \mathcal{B}(L_p(G;\hc))$, and Remark \ref{remark:transposesumandproduct}. The proof that $\mathbf{Q}_A^*=\mathbf{Q}_{A^{\dagger}}$ is similar to that of Proposition \ref{prop:adjointsofsomeop}(ii).
\end{proof}

Let $X$ be a Banach space. Given $M\subset X$ and $N\subset X^{\star}$, the annihilator and the pre-annihilator of $M$ and $N$ are denoted by
\[
M^a:= \{\varphi\in X^{\star} | \forall x\in M\; (\varphi|x)_X=0 \}\mbox{ and } N_a:= \{x\in X | \forall \varphi\in N\; (\varphi|x)_X=0 \},
\] 
respectively. The following result is presented in \cite[Remark 17 from Ch.2]{brezis} without proof. We include it for the sake of completeness.

\begin{lemma}\label{remark:annihilator}

	Let $\mathbf{A}: \operatorname{dom}(A)\subset X\rightarrow X$ be a densely defined operator. If $X$ is reflexive, then $(\ker \mathbf{A})^a$ is the closure of the image of $\mathbf{A}^T$ in the strong topology.
\end{lemma}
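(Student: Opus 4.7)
The plan is to prove the equality $(\ker \mathbf{A})^a = \overline{\operatorname{Im}(\mathbf{A}^T)}$ by establishing both inclusions, leaning on reflexivity only for the non-trivial direction.

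For the easy inclusion $\overline{\operatorname{Im}(\mathbf{A}^T)} \subseteq (\ker \mathbf{A})^a$, I would take $\psi \in \operatorname{dom}(\mathbf{A}^T)$ and $x \in \ker \mathbf{A}$ and compute directly from the definition of the transpose: $(\mathbf{A}^T\psi \,|\, x)_X = (\psi \,|\, \mathbf{A} x)_X = 0$. Hence $\operatorname{Im}(\mathbf{A}^T) \subseteq (\ker \mathbf{A})^a$. Since $(\ker \mathbf{A})^a$ is the intersection of kernels of the continuous evaluation functionals $\varphi \mapsto (\varphi\,|\,x)_X$, $x \in \ker \mathbf{A}$, it is strongly (indeed weakly-$\star$) closed in $X^{\star}$, so the inclusion passes to the closure.

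For the reverse inclusion $(\ker \mathbf{A})^a \subseteq \overline{\operatorname{Im}(\mathbf{A}^T)}$, I would argue by contradiction. Suppose $\varphi_0 \in (\ker \mathbf{A})^a \setminus \overline{\operatorname{Im}(\mathbf{A}^T)}$. The geometric form of Hahn--Banach applied in $X^{\star}$ separates $\{\varphi_0\}$ from the closed subspace $\overline{\operatorname{Im}(\mathbf{A}^T)}$, yielding $\Phi \in X^{\star\star}$ with $\Phi|_{\overline{\operatorname{Im}(\mathbf{A}^T)}} \equiv 0$ and $\Phi(\varphi_0) \neq 0$. Here reflexivity enters: the canonical embedding $J \colon X \to X^{\star\star}$ is surjective, so $\Phi = J x$ for some $x \in X$. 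Translating back, this reads $(\mathbf{A}^T \psi \,|\, x)_X = 0$ for all $\psi \in \operatorname{dom}(\mathbf{A}^T)$, while $(\varphi_0 \,|\, x)_X \neq 0$.

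The main obstacle will be the final step: concluding that the vector $x$ produced above actually lies in $\ker \mathbf{A}$, which would then contradict $\varphi_0 \in (\ker \mathbf{A})^a$. The vanishing condition on $(\mathbf{A}^T\psi\,|\,x)_X$ says precisely that $x \in \operatorname{dom}(\mathbf{A}^{TT})$ with $\mathbf{A}^{TT} x = 0$, and for a closed densely defined operator on a reflexive Banach space one has $\mathbf{A}^{TT} = \mathbf{A}$, so $x \in \ker \mathbf{A}$. Closedness of $\mathbf{A}$ is therefore used implicitly here; this is no restriction for the intended applications of the lemma in the paper, since $\mathbf{D} - \mathbf{Q}_A$ is closed by Proposition \ref{prop:operadorDQisclosed}. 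Once $x \in \ker \mathbf{A}$, the relation $\varphi_0 \in (\ker \mathbf{A})^a$ forces $(\varphi_0\,|\,x)_X = 0$, contradicting $\Phi(\varphi_0) \neq 0$ and closing the argument.
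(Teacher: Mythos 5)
Your proof is correct, but it is organized differently from the paper's. The paper first quotes \cite[Cor.~2.18]{brezis} to get $\ker \mathbf{A}=R_a$ with $R=\operatorname{Im}(\mathbf{A}^T)$, so that $(\ker\mathbf{A})^a=(R_a)^a$; it then uses the canonical embedding $J$ and reflexivity to identify $(R_a)^a$ with $(R^a)_a$, and finishes with the identity $(R^a)_a=\overline{R}$ from \cite[Prop.~1.9]{brezis}. You instead prove the two inclusions directly: the easy one from the defining relation of the transpose (plus the observation that $(\ker\mathbf{A})^a$ is closed), and the hard one by Hahn--Banach separation of $\varphi_0$ from $\overline{\operatorname{Im}(\mathbf{A}^T)}$, pulling the separating functional back to some $x\in X$ via reflexivity, and reading the resulting vanishing condition as $x\in\ker\mathbf{A}^{TT}=\ker\mathbf{A}$. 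The net content is the same --- your separation step amounts to the proof of $(R^a)_a=\overline{R}$ fused with the identification $(R_a)^a=(R^a)_a$ --- but where the paper takes $\ker\mathbf{A}=R_a$ as a black box, you re-derive it from the biduality theorem $\mathbf{A}^{TT}=\mathbf{A}$, a somewhat heavier tool which also supplies the density of $\operatorname{dom}(\mathbf{A}^T)$ needed for $\mathbf{A}^{TT}$ to be defined. Your remark that closedness of $\mathbf{A}$ is implicitly required is well taken and applies equally to the paper's proof, since \cite[Cor.~2.18]{brezis} is stated for closed operators; without closability the statement can fail (if $\operatorname{dom}(\mathbf{A}^T)=\{0\}$ and $\mathbf{A}$ is injective with non-dense kernel $\{0\}$, the two sides differ). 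As you note, this is harmless for the paper, where the lemma is only applied to the closed operator $\mathbf{D}-\mathbf{Q}_A$ of Proposition \ref{prop:operadorDQisclosed}.
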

\begin{proof}
Let $R$ be the image of $\mathbf{A}^T$.	By \cite[Cor. 2.18]{brezis}, $\ker \mathbf{A} = R_a$. Thus, $(\ker \mathbf{A})^a=(R_a)^a$. Let us show that $(R_a)^a=(R^a)_a$. Since $X$ is reflexive, the canonical embedding $J:X\rightarrow X^{\star \star}$, given by $(Jx|\varphi)_{X^{\star}}=(\varphi|x)_X=0$ for all $\varphi\in X^{\star}$, is an isometric isomorphism. Note that $x\in R_a$ iff $Jx\in R^a$. Indeed, this follows from the relation $(Jx|\varphi)_{X^{\star}}=(\varphi|x)_X$ for all $\varphi\in R$. Since $J$ is an isomorphism, $R^a=J(R_a)$.
Thus, 
\[
 \varphi\in (R_a)^a \Leftrightarrow (\varphi|x)_X=0 \mbox{ for all } x \in R_a \Leftrightarrow (\Phi|\varphi)_{X^{\star}}=0 \mbox{ for all } \Phi\in R^a \Leftrightarrow \varphi \in (R^a)_a.
\]
By \cite[Prop. 1.9]{brezis}, $(R^a)_a=\overline{R}^{X^{\star}}$ (the closure in the strong topology), and we conclude that $(\ker \mathbf{A})^a=\overline{R}^{X^{\star}}$, as desired.
\end{proof}

As a consequence, we obtain the following result.
\begin{theorem}\label{th:anihilator}
	For $1<p<\infty$, we have
	\begin{equation}\label{eq:annhilatorformula}
	(\mathfrak{V}_A^p(G;\hc))^a= \overline{(\mathbf{D}-\mathbf{Q}_{A^{\star}})W_0^{1,p'}(G;\hc)}^{L_{p'}}.	
	\end{equation}
	  For $p=2$, the following orthogonal decomposition holds 
	\begin{equation}\label{eq:orthogonaldecomp}
		L_2(G;\hc)= \mathfrak{V}_A^2(G;\hc)\oplus \overline{(\mathbf{D}-\mathbf{Q}_{A^{\dagger}})W_0^{1,2}(G;\hc)}^{L_2}.
	\end{equation}
\end{theorem}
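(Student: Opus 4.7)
The plan is to derive the theorem as a direct application of Lemma \ref{remark:annihilator} to the closed, densely defined operator $\mathbf{A}:=\mathbf{D}-\mathbf{Q}_A$ acting on the reflexive Banach space $L_p(G;\hc)$. Before invoking the lemma I would check its hypotheses: $\mathbf{A}$ is densely defined because its domain $\mathfrak{D}_p(G;\hc)$ contains $C_0^{\infty}(G;\hc)$ (Remark \ref{remark:weakderi}(i)), which is dense in $L_p(G;\hc)$; closedness is Proposition \ref{prop:operadorDQisclosed}; and reflexivity of $L_p(G;\hc)$ for $1<p<\infty$ was recorded in Section 2. By Proposition \ref{prop:transposeoperatorDQ} the transpose is $\mathbf{A}^T=\mathbf{D}-\mathbf{Q}_{A^{\star}}$ with domain $W_0^{1,p'}(G;\hc)$.

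Since $\ker\mathbf{A}=\mathfrak{V}_A^p(G;\hc)$ by definition, Lemma \ref{remark:annihilator} immediately yields
\[
(\mathfrak{V}_A^p(G;\hc))^a \;=\; \overline{\operatorname{Im}\mathbf{A}^T}^{\,L_{p'}} \;=\; \overline{(\mathbf{D}-\mathbf{Q}_{A^{\star}})W_0^{1,p'}(G;\hc)}^{\,L_{p'}},
\]
which is exactly \eqref{eq:annhilatorformula}. For the decomposition I would then specialize to $p=2$ and transfer from the Banach-space transpose to the Hilbert adjoint: the conjugate-linear Riesz identification of $L_2(G;\hc)$ with its dual sends the annihilator to the orthogonal complement and the transpose to the Hilbert adjoint, which by the second part of Proposition \ref{prop:transposeoperatorDQ} is $\mathbf{D}-\mathbf{Q}_{A^{\dagger}}$ on $W_0^{1,2}(G;\hc)$. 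Combining the first identity with the standard Hilbert-space decomposition $L_2(G;\hc)=\ker\mathbf{A}\oplus(\ker\mathbf{A})^{\perp}$, valid for any closed densely defined operator, delivers \eqref{eq:orthogonaldecomp}.

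The point requiring care, which is not really an obstacle, is keeping track of the subtle difference between the Banach-space transpose (producing $\mathbf{Q}_{A^{\star}}$) and the Hilbert adjoint at $p=2$ (producing $\mathbf{Q}_{A^{\dagger}}$). This discrepancy reflects the complex conjugation inherent in the Hilbert inner product and has already been absorbed into Proposition \ref{prop:transposeoperatorDQ}, so no additional computation is needed beyond the functional-analytic bookkeeping.
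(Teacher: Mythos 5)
Your proposal is correct and follows essentially the same route as the paper: the first identity is obtained by applying Lemma \ref{remark:annihilator} to the closed, densely defined operator $\mathbf{D}-\mathbf{Q}_A$ together with the identification of its transpose from Proposition \ref{prop:transposeoperatorDQ}, and the $p=2$ decomposition comes from the Hilbert adjoint $\mathbf{D}-\mathbf{Q}_{A^{\dagger}}$ and the standard relation $\overline{\operatorname{ran}\mathbf{A}^*}=(\ker\mathbf{A})^{\perp}$ (which the paper cites from Schm\"udgen). Your explicit verification of the hypotheses (density of the domain, closedness, reflexivity) and your remark distinguishing the bilinear duality pairing from the sesquilinear inner product are exactly the bookkeeping the paper leaves implicit.
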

\begin{proof}
	The first part follows from Proposition \ref{prop:transposeoperatorDQ} and Lemma \ref{remark:annihilator}. The second part is a consequence of Proposition \ref{prop:transposeoperatorDQ} and \cite[Prop. 16(ii) and Th. 1.8 (iii)]{schmudgen}.
\end{proof}

\begin{example}\label{eq:examplemainvekua}
	Let $f\in W^{2,\infty}(G;\hc)$ be a scalar function such that $\frac{1}{f}\in L_{\infty}(G)$, and consider $A=\left(-\frac{\nabla f}{f},0,0,0\right)$. Denote $\mathbf{D}_f:= \mathbf{D}-\mathbf{Q}_A=\mathbf{D}+M^{\frac{\nabla f}{f}}$. This is one of the so-called {\it main Vekua operators} \cite[Ch. 16]{pseudoanalyticvlad}. Hence 
	\[
	\mathbf{D}_f^T=\mathbf{D}+M^{\overline{\frac{\nabla f}{f}}}= \mathbf{D}+M^{-\frac{\nabla f}{f}}=\mathbf{D}+M^{f\nabla\left(\frac{1}{f}\right)}=\mathbf{D}_{\frac{1}{f}}.
	\]
	 Note that $\mathbf{D}_{\frac{1}{f}}=\mathbf{D}-M^{\frac{\nabla f}{f}}$. Thus, $(\mathfrak{V}^p_{\left(-\frac{\nabla f}{f},0,0,0\right)}(G;\hc))^a=\overline{(\mathbf{D}-M^{\frac{\nabla f}{f}})W_0^{1,p'}(G;\hc)}^{L_{p'}}$. Suppose that $w\in W^{1,p'}(G;\hc)$  satisfies  $\mathbf{D}_{\frac{1}{f}}w\in \mathfrak{D}_{p'}(G;\hc)$. Thus,
	\begin{align*}
		\mathbf{D}_f\mathbf{D}_{\frac{1}{f}} w= & \mathbf{D}\left(\mathbf{D}w-w\frac{\nabla f}{f}\right)+\left(\mathbf{D}w-w\frac{\nabla f}{f}\right)\frac{\nabla f}{f}\\
		= & -\bigtriangleup w-\mathbf{D}\left(w\frac{\nabla f}{f}\right)+(\mathbf{D}w)\frac{\nabla f}{f}-w\left(\frac{\nabla f}{f}\right)^2\\
		= & -\bigtriangleup w -(\mathbf{D}w)\frac{\nabla f}{f}-\overline{w}\mathbf{D}\left(\frac{\nabla f}{f}\right)+2\sum_{j=1}^{3}w_j\frac{\partial}{\partial x_j}\left(\frac{\nabla f}{f}\right)+(\mathbf{D}w)\frac{\nabla f}{f}-w\left(\frac{\nabla f}{f}\right)^2\\
		= & -\bigtriangleup w-\overline{w}\left(-\left(\frac{\nabla f}{f}\right)^2-\frac{\bigtriangleup f}{f}\right)+2\sum_{j=1}^{3}w_j\frac{\partial}{\partial x_j}\left(\frac{\nabla f}{f}\right)-w\left(\frac{\nabla f}{f}\right)^2\\
		=& -\bigtriangleup w+\frac{\bigtriangleup f}{f} \overline{w}-2\left(\frac{\nabla f}{f}\right)^2\vec{w}+2\sum_{j=1}^{3}w_j\frac{\partial}{\partial x_j}\left(\frac{\nabla f}{f}\right).
	\end{align*}
	Let $q_f=\frac{\bigtriangleup f}{f}$ and note that $q_{\frac{1}{f}}=f\bigtriangleup\left(\frac{1}{f}\right)= 2\frac{\nabla f\cdot \nabla f}{f^2}-\frac{\bigtriangleup f}{f}=-2\left(\frac{\nabla f}{f}\right)^2-\frac{\bigtriangleup f}{f}$. The potential $q_{\frac{1}{f}}$ is the Darboux transform of $q_f$. Let $R_{j,k}=\frac{1}{f}\frac{\partial f}{\partial x_k\partial x_j}-\frac{1}{f^2}\frac{\partial f}{\partial x_k}\frac{\partial f}{\partial x_j}$ for $j,k=1,2,3$. Observe that the matrix $R=(R_{j,k})_{j,k=1}^{3}$ is symmetric and $\sum_{j=1}^{3}w_j\frac{\partial}{\partial x_j}\left(\frac{\nabla f}{f}\right)=\sum_{k,j=1}^{3}R_{k,j}w_je_k=R\vec{w}$. Consequently, 
	\begin{align}
		\operatorname{Sc}\mathbf{D}_f\mathbf{D}_{\frac{1}{f}}w= & (-\bigtriangleup+q_f)\operatorname{Sc}w,\label{eq:schrodinger1}\\
		\operatorname{Vec}\mathbf{D}_f\mathbf{D}_{\frac{1}{f}}w=& (-\bigtriangleup+q_{\frac{1}{f}})\vec{w}+2R\vec{w}.\label{eq:schrodinger2}
	\end{align}
	Equation \eqref{eq:schrodinger1} is known as a factorization of the Schr\"odinger operator, see \cite{kravchenkoricatti} or \cite[Th. 154]{pseudoanalyticvlad}. Consequently, $w=(\mathbf{D}-M^{\frac{\nabla f}{f}})u$ with $u\in W_0^{1,p'}(G;\hc)$ belongs to $\mathfrak{V}^{p'}_{\left(-\frac{\nabla f}{f},0,0,0\right)}(G;\hc)$ if $u$ is a solution of system \eqref{eq:schrodinger1}-\eqref{eq:schrodinger2}.
\end{example}
\begin{remark}\label{remark:invertibilityS}
	For fixed $x,y\in G$, a direct computation shows the inequality 
	\[
	 \max\left\{\int_G|E(y-x)|dV_y,  \int_G|E(y-x)|dV_x\right\}\leqslant \operatorname{diam}(G).
	\]
	By \cite[Th. 6.18]{folland} and \eqref{normproduct}, we have  $\|\mathbf{T}_Gu\|_{L_p(G;\hc)}\leqslant \sqrt{2}\operatorname{diam}(G)\|u\|_{L_p(G;\hc)}$ for all $u\in L_p(G;\hc)$. Thus, $\|\mathbf{T}_G\|_{\mathcal{B}(L_p(G;\hc))}\leqslant \sqrt{2}\operatorname{diam}(G)$. On the other hand, by \eqref{normproduct}, we have $\|\mathbf{Q}w\|_{L_p(G;\hc)}\leqslant \sqrt{2}\left(\sum_{j=1}^4\|a_j\|_{L_{\infty}(G;\hc)}\right)\|w\|_{L_p(G;\hc)}$. Consequently, if $\sum_{j=1}^{4}\|a_j\|_{L_{\infty}(G;\hc)}<\frac{1}{2\operatorname{diam}(G)}$, then $\|\mathbf{T}_G\mathbf{Q}_A\|_{\mathcal{B}(L_p(G;\hc)}<1$, which implies that $\mathbf{S}_G^A\in \mathcal{G}(L_p(G;\hc))$ \cite[Lemma 2.1 of Ch. VII]{conwayfunctional}.
\end{remark}

\begin{theorem}\label{th:adjointforboundedcoefficients}
	If $\sum_{j=1}^{4}\|a_j\|_{L_{\infty}(G;\hc)}<\frac{1}{2\operatorname{diam}(G)}$, then 
	\begin{equation}\label{eq:annihilatorformula2}
	(\mathfrak{V}_A^p(G;\hc))^a= (\mathbf{D}-\mathbf{Q}_{A^{\star}})W_0^{1,p'}(G;\hc).	
	\end{equation}
	 For $p=2$ we have the orthogonal decomposition 
	\begin{equation}\label{eq:orthogonaldecomp2}
		L_2(G;\hc)= \mathfrak{V}_A^2(G;\hc)\oplus (\mathbf{D}-\mathbf{Q}_{A^{\dagger}})W_0^{1,2}(G;\hc).
	\end{equation}
\end{theorem}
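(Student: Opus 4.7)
The strategy is to drop the closure in \eqref{eq:annhilatorformula} of Theorem \ref{th:anihilator} by proving that, under the smallness hypothesis, the range $(\mathbf{D}-\mathbf{Q}_{A^{\star}})W_0^{1,p'}(G;\hc)$ is already closed in $L_{p'}(G;\hc)$. The starting observation is the factorization
\[
(\mathbf{D}-\mathbf{Q}_{A^{\star}})w \;=\; (\mathbf{I}-\mathbf{Q}_{A^{\star}}\mathbf{T}_G)\,\mathbf{D}w, \qquad w\in W_0^{1,p'}(G;\hc),
\]
which is immediate from $w=\mathbf{T}_G\mathbf{D}w$ (Proposition \ref{prop:integralop}(iii)). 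The important point is that the relevant operator here is $\mathbf{I}-\mathbf{Q}_{A^{\star}}\mathbf{T}_G$, not $\mathbf{S}_G^{A^{\star}}=\mathbf{I}-\mathbf{T}_G\mathbf{Q}_{A^{\star}}$; identifying this composition order is the main conceptual step.

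I would next verify that $\mathbf{I}-\mathbf{Q}_{A^{\star}}\mathbf{T}_G\in\mathcal{G}(L_{p'}(G;\hc))$. Since $A^{\star}=(\overline{a_1},a_4,\overline{a_3},a_2)$, the coefficient vector $A^{\star}$ inherits the smallness bound of $A$ because quaternionic/complex conjugation preserves the $L_\infty$-norm. Repeating the estimates of Remark \ref{remark:invertibilityS} in reverse order yields
\[
\|\mathbf{Q}_{A^{\star}}\mathbf{T}_G\|_{\mathcal{B}(L_{p'}(G;\hc))}\;\leq\; 2\operatorname{diam}(G)\sum_{j=1}^{4}\|a_j\|_{L_\infty(G;\hc)}\;<\;1,
\]
so a Neumann-series argument shows that $\mathbf{I}-\mathbf{Q}_{A^{\star}}\mathbf{T}_G$ is a homeomorphism of $L_{p'}(G;\hc)$. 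In parallel, $\mathbf{D}(W_0^{1,p'}(G;\hc))$ is closed in $L_{p'}(G;\hc)$: Proposition \ref{prop:integralop}(iii) identifies this set with $\{v\in L_{p'}(G;\hc):\mathbf{T}_G v\in W_0^{1,p'}(G;\hc)\}$, which is the preimage of the closed subspace $W_0^{1,p'}(G;\hc)\subset W^{1,p'}(G;\hc)$ under the continuous map $\mathbf{T}_G:L_{p'}(G;\hc)\to W^{1,p'}(G;\hc)$ of Proposition \ref{prop:integralop}(i).

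Combining these pieces, $(\mathbf{D}-\mathbf{Q}_{A^{\star}})W_0^{1,p'}(G;\hc)$ is the image of the closed subspace $\mathbf{D}(W_0^{1,p'}(G;\hc))$ under the homeomorphism $\mathbf{I}-\mathbf{Q}_{A^{\star}}\mathbf{T}_G$, hence is itself closed; together with Theorem \ref{th:anihilator} this yields \eqref{eq:annihilatorformula2}. For $p=2$, the coefficient vector $A^{\dagger}=(a_1^{\dagger},a_4^{*},a_3^{\dagger},a_2^{*})$ again shares the individual $L_\infty$-norms of $A$, so the identical argument, invoking now the Hilbert-adjoint half of Proposition \ref{prop:transposeoperatorDQ}, shows that $(\mathbf{D}-\mathbf{Q}_{A^{\dagger}})W_0^{1,2}(G;\hc)$ is closed, upgrading the closed orthogonal decomposition \eqref{eq:orthogonaldecomp} to \eqref{eq:orthogonaldecomp2}. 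Beyond locating the correct factorization, the only remaining obstacle is bookkeeping: checking that $A^{\star}$ (resp.\ $A^{\dagger}$) inherits the smallness bound from $A$, which is immediate from the preservation of $L_\infty$-norms by conjugations.
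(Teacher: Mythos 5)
Your proposal is correct, and it reaches the conclusion by a route that differs from the paper's in its mechanics, though both hinge on the same smallness condition. You factor $(\mathbf{D}-\mathbf{Q}_{A^{\star}})w=(\mathbf{I}-\mathbf{Q}_{A^{\star}}\mathbf{T}_G)\mathbf{D}w$ on $W_0^{1,p'}(G;\hc)$, invert $\mathbf{I}-\mathbf{Q}_{A^{\star}}\mathbf{T}_G$ by a Neumann series, observe that $\mathbf{D}W_0^{1,p'}(G;\hc)=\mathbf{T}_G^{-1}\bigl(W_0^{1,p'}(G;\hc)\bigr)$ is closed as the preimage of a closed subspace under the bounded map $\mathbf{T}_G:L_{p'}(G;\hc)\to W^{1,p'}(G;\hc)$, and conclude that the range is the image of a closed set under a homeomorphism. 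The paper instead factors on the other side, $\mathbf{T}_G(\mathbf{D}-\mathbf{Q}_{A^{\star}})u=\mathbf{S}_G^{A^{\star}}u$, so that $\mathbf{R}=(\mathbf{S}_G^{A^{\star}})^{-1}\mathbf{T}_G$ is a bounded left inverse of the restricted operator; this yields the a priori estimate $\|(\mathbf{D}-\mathbf{Q}_{A^{\star}})u\|_{L_{p'}}\geqslant c\|u\|_{L_{p'}}$, and closedness of the range then follows from closedness of the operator together with the closed-range criterion of \cite[Th. 2.20]{brezis}. So your remark that the ``relevant'' operator is $\mathbf{I}-\mathbf{Q}_{A^{\star}}\mathbf{T}_G$ rather than $\mathbf{S}_G^{A^{\star}}$ is true only for your chosen factorization: both composition orders lead to a proof, since both have operator norm bounded by $2\operatorname{diam}(G)\sum_{j=1}^{4}\|a_j\|_{L_{\infty}(G;\hc)}<1$. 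What your version buys is elementarity --- it avoids the closed-range theorem for unbounded operators entirely and makes the closedness of $\mathbf{D}W_0^{1,p'}(G;\hc)$ (the $A=0$ case, i.e.\ the Hodge decomposition) explicit as an ingredient rather than a corollary; what the paper's version buys is a quantitative lower bound with an explicit constant $c=\|\mathbf{R}\|_{\mathcal{B}(L_{p'}(G;\hc))}^{-1}$. Your bookkeeping for $A^{\star}$ and $A^{\dagger}$ is also right: quaternionic conjugation, the involution $\dagger$, and complex conjugation all preserve $|\cdot|_{\hc}$, so both coefficient vectors inherit the smallness bound and the identical argument closes the range of $\mathbf{D}-\mathbf{Q}_{A^{\dagger}}$ on $W_0^{1,2}(G;\hc)$ for the orthogonal decomposition.
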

\begin{proof}
By Remark \ref{remark:invertibilityS}, $\mathbf{S}_G^{A^{\star}}\in \mathcal{G}(L_{p'}(G;\hc))$.  Consider $\mathbf{R}=(\mathbf{S}_G^{A^{\star}})^{-1}\mathbf{T}_G\in \mathcal{B}(L_{p'}(G;\hc))$.  For $u\in W^{1,p'}_0(G;\hc)$, Proposition \eqref{prop:integralop}(iii) implies
\[
\mathbf{R}(\mathbf{D}-\mathbf{Q}_{A^{\star}})u=(\mathbf{S}_G^{A^{\star}})^{-1}(\mathbf{T}_G\mathbf{D}u-\mathbf{T}_G\mathbf{Q}_{A^{\star}}u)=(\mathbf{S}_G^{A^{\star}})^{-1}\mathbf{S}_G^{A^{\star}}u=u.
\]
 Consequently, $\|u\|_{L_{p'}(G;\hc)}\leqslant \|\mathbf{R}\|_{\mathcal{B}(L_{p'}(G;\hc))}\|(\mathbf{D}-\mathbf{Q}_{A^{\star}})u\|_{L_p(G;\hc)}$. Therefore,\\ $\|(\mathbf{D}-\mathbf{Q}_{A^{\star}})u\|_{L_p(G;\hc)}\geqslant c\|u\|_{L_{p'}(G;\hc)}$ for all $u\in W_0^{1,2}(G;\hc)$, with $c=\|\mathbf{R}\|_{\mathcal{B}(L_{p'}(G;\hc))}^{-1}$. Hence, Propositions \ref{prop:operadorDQisclosed}, \ref{prop:transposeoperatorDQ}, and \cite[Th. 2.20]{brezis} imply that $(\mathbf{D}-\mathbf{Q}_{A^{\star}})W_0^{1,p'}(G;\hc)$ is closed in $L_{p'}(G;\hc)$.
\end{proof}

For the particular case $A=(0,0,0,0)$, we have obtain the annihilator of the monogenic Bergman space and the standard Hodge decomposition \cite[Th. 8.7]{gurlebeckclifford}.
\begin{corollary}
	For $1<p<\infty$, $(\mathfrak{M}^p(G;\hc))^a=\mathbf{D}W_0^{1,p'}(G;\hc)$. For the case $p=2$, we obtain the Hodge decomposition $L_2(G;\hc)=\mathfrak{M}^2(G;\hc)\oplus \mathbf{D}W_0^{1,2}(G;\hc)$. The decomposition is also valid with respect to the $\hc$-inner product \eqref{eq:hcinnerproduct}.
\end{corollary}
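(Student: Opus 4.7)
The first two assertions follow from Theorem \ref{th:adjointforboundedcoefficients} by specializing $A=(0,0,0,0)$. Then $\mathbf{Q}_A=\mathbf{Q}_{A^{\star}}=\mathbf{Q}_{A^{\dagger}}=0$, so $\mathfrak{V}_A^p(G;\hc)=\mathfrak{M}^p(G;\hc)$, and the smallness condition $\sum_{j=1}^{4}\|a_j\|_{L_{\infty}}<\frac{1}{2\operatorname{diam}(G)}$ is trivially satisfied ($0<\frac{1}{2\operatorname{diam}(G)}$). Thus \eqref{eq:annihilatorformula2} reduces to $(\mathfrak{M}^p(G;\hc))^a=\mathbf{D}W_0^{1,p'}(G;\hc)$, and \eqref{eq:orthogonaldecomp2} reduces to the Hodge decomposition with respect to the complex inner product $\langle\cdot,\cdot\rangle_{L_2(G;\hc)}$.

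The remaining task is to upgrade the decomposition to the right $\hc$-inner product $\langle\langle\cdot|\cdot\rangle\rangle_{L_2(G;\hc)}$. The strategy is to exploit the right $\hc$-module structure of both summands. For $\mathfrak{M}^2(G;\hc)$, the quaternionic Leibniz rule \eqref{eq:leibnizrule} applied with $v=a\in\hc$ constant gives $\mathbf{D}(ua)=(\mathbf{D}u)a$, so $ua\in\mathfrak{M}^2(G;\hc)$ whenever $u\in\mathfrak{M}^2(G;\hc)$. For $\mathbf{D}W_0^{1,2}(G;\hc)$, the same identity shows that if $v=\mathbf{D}u$ with $u\in W_0^{1,2}(G;\hc)$, then $va=\mathbf{D}(ua)\in \mathbf{D}W_0^{1,2}(G;\hc)$. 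Hence both summands are right $\hc$-modules.

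Now fix $u\in\mathfrak{M}^2(G;\hc)$ and $v\in\mathbf{D}W_0^{1,2}(G;\hc)$, and set $q:=\langle\langle u|v\rangle\rangle_{L_2(G;\hc)}\in\hc$. For each $a\in\hc$, $ua\in\mathfrak{M}^2(G;\hc)$, so the complex orthogonality yields
\begin{equation*}
0=\langle ua,v\rangle_{L_2(G;\hc)}=\operatorname{Sc}\langle\langle ua|v\rangle\rangle_{L_2(G;\hc)}=\operatorname{Sc}(a^{\dagger}q).
\end{equation*}
Choosing $a=1,e_1,e_2,e_3$ and using $\operatorname{Sc}(e_k q)=-q_k$ for $k=1,2,3$ together with $\operatorname{Sc}(q)=q_0$, we conclude $q_0=q_1=q_2=q_3=0$, that is, $q=0$. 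Therefore $\mathfrak{M}^2(G;\hc)$ and $\mathbf{D}W_0^{1,2}(G;\hc)$ are orthogonal with respect to $\langle\langle\cdot|\cdot\rangle\rangle_{L_2(G;\hc)}$, and since the direct-sum decomposition $L_2(G;\hc)=\mathfrak{M}^2(G;\hc)\oplus\mathbf{D}W_0^{1,2}(G;\hc)$ was already obtained at the level of the complex inner product (and the underlying set decomposition is the same), the Hodge decomposition holds with respect to the $\hc$-inner product as well.

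The only nontrivial ingredient is the algebraic upgrade from the complex scalar inner product to the quaternionic one; once the right $\hc$-module property of both factors is noticed, this reduces to the elementary observation that $\operatorname{Sc}(a^{\dagger}q)=0$ for all $a\in\hc$ forces $q=0$. I do not anticipate a genuine obstacle here.
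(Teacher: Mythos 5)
Your proof is correct and follows essentially the same route as the paper: the first two assertions are obtained by specializing Theorem \ref{th:adjointforboundedcoefficients} to $A=(0,0,0,0)$, for which the smallness hypothesis $\sum_j\|a_j\|_{L_\infty}<\frac{1}{2\operatorname{diam}(G)}$ holds vacuously and $\mathbf{Q}_A=\mathbf{Q}_{A^{\star}}=\mathbf{Q}_{A^{\dagger}}=0$. The only piece the paper does not spell out is the validity of the decomposition for the $\hc$-inner product (it defers to the literature), and your argument for it is sound: both summands are right $\hc$-modules by the Leibniz rule \eqref{eq:leibnizrule} with constant right factor, so $\operatorname{Sc}\left(a^{\dagger}\langle\langle u|v\rangle\rangle_{L_2(G;\hc)}\right)=0$ for all $a\in\hc$ forces $\langle\langle u|v\rangle\rangle_{L_2(G;\hc)}=0$.
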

For some classes of coefficients $A$, we can obtain the result of Theorem \ref{th:adjointforboundedcoefficients} without the restriction in $A$.
\begin{example}\label{example:helmholtz}
	In the case $A=(\pm\alpha,0,0,0)$ where $\alpha\in \mathbb{C}$ is a scalar with $\operatorname{Im}\alpha\geqslant 0$, we have  $\mathbf{D}-\mathbf{Q}_A=\mathbf{D}\mp\alpha \mathbf{I}=:\mathbf{D}_{\mp\alpha}$. It is known that the function 
	\[
	\mathcal{K}_{\mp\alpha}(x)=\frac{-e^{\operatorname{i}\alpha |\vec{x}|}}{4\pi}\left(\mp\frac{\alpha}{|\vec{x}|}+\frac{\vec{x}}{|\vec{x}|^3}-\operatorname{i}\alpha\frac{\vec{x}}{|\vec{x}|^2}\right), \quad x\neq 0.
	\]
	is a fundamental solution for the operator $\mathbf{D}_{\mp\alpha}$ \cite[Sec. 2.5]{kravquaternionic}. The operator 
	\[
	\mathbf{T}_G^{\mp\alpha}w(x)=\int_G \mathcal{K}_{\mp\alpha}(x-y)w(y)dV_y,
	\]
	possesses properties similar to the Theodorescu operator. In particular, for $w\in C_0^{\infty}(G;\hc)$, the following relation holds \cite[Sec. 2.5, Th. 4]{kravquaternionic}
	\begin{equation}\label{eq:BPHelmholtz}
		w(x)=\mathbf{T}_G^{\mp\alpha} \mathbf{D}_{\mp\alpha} w(x),\quad x\in G.
	\end{equation}
A direct computation yields the bound
\[
\max\left\{\int_G|\mathcal{K}_{\mp\alpha}(x-y)|dV_x, \int_G |\mathcal{K}_{\mp\alpha}(x-y)|dV_y\right\}\leqslant C,.
\]
where $C=2|\alpha|\left(\frac{1-e^{-d\operatorname{Im}\alpha}}{\operatorname{Im}^2\alpha}-d\frac{e^{d\operatorname{Im}\alpha}}{\operatorname{Im}\alpha}\right)+\frac{1-e^{d\operatorname{Im}\alpha}}{\operatorname{Im}\alpha}$ and $d=\operatorname{diam}(G)$. This expression is well defined even for $\operatorname{Im}\alpha=0$. Proceeding as in Remark \ref{remark:invertibilityS}, we conclude that $\mathbf{T}_G^{\mp\alpha}\in \mathcal{B}(L_p(G;\hc))$ for $1<p<\infty$. In this case, $\mathbf{D}_{\mp\alpha}^T=\mathbf{D}_{\mp\alpha}: W_0^{1,p'}(G;\hc)\rightarrow L_{p'}(G;\hc)$. From \ref{eq:BPHelmholtz}, we derive
\[
\|\mathbf{D}_{\mp\alpha} w\|_{L_{p'}(G;\hc)}\geqslant \|\mathbf{T}_G^{\mp\alpha}\|_{\mathcal{B}(L_{p'}(G;\hc))}\|w\|_{L_{p'}(G;\hc)}, \quad \forall w\in C_0^{\infty}(G;\hc).
\]
Since $\mathbf{T}_G^{\mp\alpha}$ is continuous and $C_0^{\infty}(G;\hc)$ is dense in $W_0^{1,p'}(G;\hc)$, this inequality holds for all $w\in W_0^{1,p'}(G;\hc)$. By \cite[Th. 2.20]{brezis}, we conclude that $\mathbf{D}_{\mp\alpha}W_0^{1,p'}(G;\hc)$ is closed in $L_{p'}(G;\hc)$, implying 
\begin{equation}\label{eq:anihilatorHelmholtz}
	(\mathfrak{V}_{(\pm\alpha,0,0,0)}^p(G;\hc))^a=\mathbf{D}_{\mp\alpha}W_0^{1,p'}(G;\hc).
\end{equation}
The space $\mathfrak{V}_{(\mp\alpha,0,0,0)}^p(G;\hc)$ is a right $\hc$-module, and for $p=2$, the orthogonal decomposition
\[
L_2(G;\hc)=\mathfrak{V}_{(\mp\alpha,0,0,0)}^2(G;\hc)\oplus \mathbf{D}_{\pm(-\alpha^*)}W_0^{1,2}(G;\hc),
\]
is valid with respect to the $\hc$-inner product \eqref{eq:hcinnerproduct} (note that $\operatorname{Im}(-\alpha^*)\geqslant 0$).
\end{example}

\begin{example}
	For a general $\alpha\in \hc$, set $\lambda =\sqrt{\vec{\alpha}^2}$ to be chosen such that $\operatorname{Im}\lambda \geqslant 0$. Define $\xi_{\pm}=\alpha_0\pm \lambda$ and $P^{\pm}=M^{\lambda\pm \vec{\alpha}}$. Denote by $\mathfrak{S}$ the set of zero divisors of $\hc$. In this case, $\mathbf{D}-\mathbf{Q}_{(-\alpha,0,0,0)}=\mathbf{D}+M^{\alpha}=:\mathbf{D}_{\alpha}$ and $D_{\alpha}^T= D+M^{\overline{\alpha}}$. Consider the integral operator
	\begin{equation}\label{eq:Thodorescualphabiqua}
		\mathbf{T}_G^{\alpha}:=\begin{cases}
			P^+\mathbf{T}_G^{\xi_{+}}+P^{-}\mathbf{T}_G^{\xi_{-}}, &\mbox{ if } \alpha\not\in \mathfrak{S} \mbox{ and } \vec{\alpha}^2\neq 0,\\
			\left(\mathbf{I}+M^{\vec{\alpha}}\frac{\partial}{\partial \alpha_0}\right)\mathbf{T}_G^{\alpha_0}, & \mbox{ if } \alpha \not \in \mathfrak{S} \mbox{ and } \vec{\alpha}^2=  0,\\
			P^{+}\mathbf{T}_G^{2\alpha_0}+P^{-}\mathbf{T}_G^0, & \mbox{ if } \alpha\in \mathfrak{S} \mbox{ and } \alpha_0\neq 0,\\
			\mathbf{T}_G+M^{\alpha}L_G, & \mbox{ if } \alpha\in \mathfrak{S} \mbox{ and } \alpha_0=0,
		\end{cases}
	\end{equation}
where $L_Gw(x)=\frac{1}{4\pi}\int_G \frac{w(y)}{|x-y|}dV_y$ is the Newtonian potential. Applying the same procedure as in Remark \ref{remark:invertibilityS} and the previous example, we find that $\mathbf{T}_G^{\alpha}\in \mathcal{B}(L_p(G;\hc))$ for $1<p<\infty$. According to \cite[Th. 4.17]{shapirokrav} , we have
\[
w(x)=\mathbf{T}^{\alpha}_G\mathbf{D}_{\alpha}w(x),\quad x\in G,
\]
for all $w\in C_0^{\infty}(G;\hc)$. Using the same reasoning as in the previous example, we obtain
\begin{equation}\label{eq:anihilatorHelmholtzbiqua}
	(\mathfrak{V}_{(-\alpha,0,0,0)}^p(G;\hc))^a=\mathbf{D}_{\overline{\alpha}}W_0^{1,p'}(G;\hc).
\end{equation} 
\end{example}
\begin{example}
	Let $f\in W^{2,\infty}(G)$ as in Example \ref{eq:examplemainvekua}, and take  $A=\left(0,0,\frac{\nabla f}{f},0\right)$. Hence, by \eqref{eq:leibnizrule}
	\[
	\mathbf{D}-\mathbf{Q}_A= \mathbf{D}-\frac{\nabla f}{f}\mathbf{I}= f\mathbf{D}\frac{1}{f}.
	\]
	In this case,
	\[
	(\mathbf{D}-\mathbf{Q}_A)^T= \mathbf{D}+\frac{\nabla f}{f}\mathbf{I}= \frac{1}{f}\mathbf{D}f.
	\]
	Note that the operator $L_p(G;\hc) \ni w\mapsto fw\in L_p(G;\hc)$ is a homeomorphism with inverse $L_p(G;\hc) \ni w\mapsto \frac{w}{f}\in L_p(G;\hc)$. Actually, $w\in \mathfrak{V}^p_{\left(0,0,\frac{\nabla f}{f},0\right)}(G;\hc)$ iff $\frac{w}{f}\in \mathfrak{M}^p(G;\hc)$. Hence, the operator $f\mathbf{T}_G\frac{1}{f}$ satisfies the same properties as in  Proposition \ref{prop:integralop} for the operator $f\mathbf{D}\frac{1}{f}$. Using the same reasoning as in Example \ref{example:helmholtz}, we conclude that
	\begin{equation}\label{eq:annihilatorDf}
		\left(\mathfrak{V}^p_{\left(0,0,\frac{\nabla f}{f},0\right)}(G;\hc)\right)^a= \left(\frac{1}{f}\mathbf{D}f\right)W_0^{1,p'}(G;\hc).
	\end{equation}
	For $f$ real-valued and $p=2$, we obtain 
	\begin{equation}\label{eq:hodgemainVekua}
		L_2(G;\hc)= \mathfrak{V}^2_{\left(0,0,\frac{\nabla f}{f},0\right)}(G;\hc)\oplus\left(\frac{1}{f}\mathbf{D}f\right)W_0^{1,2}(G;\hc).
	\end{equation}
	It is important to note that $\mathfrak{V}^p_{\left(0,0,\frac{\nabla f}{f},0\right)}(G;\hc)$ is a right $\hc$-module, and the decomposition \eqref{eq:hodgemainVekua} is valid with the $\hc$-inner product  \eqref{eq:hcinnerproduct}.

	Now, suppose that $u\in W^{1,p'}(G;\hc)$ satisfies $\frac{1}{f}\mathbf{D}(fu)\in \mathfrak{D}_{p'}(G;\hc)$. We have
	\begin{align*}
		\left(f\mathbf{D}\frac{1}{f}\right)	\left(f\mathbf{D}\frac{1}{f}\right)^Tu =& \left(f\mathbf{D}\frac{1}{f}\right)\left(\frac{1}{f}\mathbf{D}fu\right)\\
		=& \frac{1}{f}\mathbf{D}\left[\frac{\nabla f}{f^2}u+\frac{\mathbf{D}u}{f}\right]  \\
		=& f\Big{[}-\frac{\bigtriangleup u}{f}-\frac{\nabla f \mathbf{D}u}{f^2}+\left(-2\frac{(\nabla f)^2}{f^3}-\frac{\bigtriangleup f}{f^2}\right)u\\
		& \quad -\frac{\nabla f\mathbf{D}u}{f^2}-\frac{2}{f^2}\sum_{k=1}^{3}\frac{\partial f}{\partial x_k}\frac{\partial u}{\partial x_k}\Big{]}. 
	\end{align*}
	
	Note that 
	\[
	(\nabla f)( \mathbf{D}u)= -\sum_{k=1}^{3}\frac{\partial f}{\partial x_k}\frac{\partial u}{\partial x_k}+\sum_{\overset{k,j=1}{k\neq j}}^{3}e_ke_j\frac{\partial f}{\partial x_k}\frac{\partial u}{\partial x_k}.
	\]
	Consequently
	\begin{equation}\label{eq:factorizationDf}
		\left(f\mathbf{D}\frac{1}{f}\right)	\left(f\mathbf{D}\frac{1}{f}\right)^Tu =(-\bigtriangleup+q_{\frac{1}{f}})u-\frac{2}{f}\sum_{\overset{k,j=1}{k\neq j}}^{3}e_ke_j\frac{\partial f}{\partial x_k}\frac{\partial u}{\partial x_k}.
	\end{equation}
	For scalar-valued $u$, this yields $\left(f\mathbf{D}\frac{1}{f}\right)	\left(f\mathbf{D}\frac{1}{f}\right)^Tu =(-\bigtriangleup+q_{\frac{1}{f}})u-\frac{2\nabla f\times \nabla u}{f}$.

	A similar treatment can be applied to the operator $\mathbf{D}-\frac{\nabla f}{f}\mathbf{I}-\mu \mathbf{I}= f(\mathbf{D}-\mu \mathbf{I})\frac{1}{f}$, with $\mu \in \mathbb{C}$ satisfying $\operatorname{Im}\mu \geqslant 0$ (see \cite{sprossig2}).
\end{example}

	Theorem \ref{th:anihilator} and decomposition \ref{eq:orthogonaldecomp} motivated us to consider a decomposition of the form  $L_{p'}(G;\hc)=\mathfrak{V}_A^{p'}(G;\hc)\oplus \overline{(\mathbf{D}-\mathbf{Q}_{A^{\star}})W_0^{1,p'}(G;\hc)}^{L_{p'}}$. If such a decomposition holds, then $(\mathfrak{V}_A^p(G;\hc))^{\star}=\mathfrak{V}_A^{p'}(G;\hc)$. However, establishing a result of this form is not  straightforward, even for Bergman spaces of analytic functions. For $p>2$, following and a idea from \cite{cofman,hedenmal}, we can establish the relationship:
	\begin{equation}\label{eq:Lpdecomposition}
		L_{p'}(G;\hc)=\overline{\mathfrak{V}_A^{p'}(G;\hc)+\overline{(\mathbf{D}-\mathbf{Q}_{A^{\star}})W_0^{1,p'}(G;\hc)}^{L_{p'}}}^{L_{p'}}.
	\end{equation}
Since $p'<2$, the embeddings $L_2(G;\hc)\hookrightarrow L_{p'}(G;\hc)$ and $L_p(G;\hc)\hookrightarrow L_2(G;\hc)$ are bounded. Consequently, $V_A^2(G;\hc)\subset V_A^{p'}(G;\hc)$ and $(V_A^2(G;\hc))^a\subset (V_A^p(G;\hc))^a$. Note that $L_2(G;\hc)$ (as a subspace) is dense in $L_{p'}(G;\hc)$. Thus,
\[
L_2(G;\hc)= V_A^2(G;\hc)\oplus (V_A^2(G;\hc))^a\subset V_A^{p'}(G;\hc)+(V_A^p(G;\hc))^a.
\]
By the density of $L_2(G;\hc)$ and Theorem \ref{th:anihilator}, we obtain \eqref{eq:Lpdecomposition}. A priori, the sum in \eqref{eq:Lpdecomposition} does not have to be direct. 

\begin{example}\label{example:monogenicdecomposition}
	In the case $A=(0,0,0,0)$, \eqref{eq:Lpdecomposition} takes the form 
	\[
	L_{p'}(G;\hc) = \overline{\mathfrak{M}^{p'}(G;\hc)+\mathbf{D}W_0^{1,p'}(G;\hc)}^{L_{p'}}.
	\]
	Thus, $w=\mathbf{D}u$, with $u\in W_0^{1,p'}(G;\hc)$, belongs to $\mathfrak{M}^{p'}(G;\hc)$ iff $\bigtriangleup u=0$. If $\Gamma$ is of class $C^{1,1}$, then $u=0$ \cite[Th. 9.15]{gilbargtrudinger}, and the sum is direct.
\end{example}

\begin{example}
Now consider $G=\mathbb{B}$, the unit ball, and $A=(\operatorname{i}\sqrt{\lambda} e_1,0,0,0)$  with $\lambda>0$. Take $\mathbf{D}-\mathbf{Q}_A=\mathbf{D}-M^{\vec{\alpha}}$, with $\vec{\alpha}=\operatorname{i}\sqrt{\lambda}e_1$. Hence, $(\mathbf{D}-M^{\vec{\alpha}})^T=\mathbf{D}+M^{\vec{\alpha}}$. When $w=(\mathbf{D}+M^{\vec{\alpha}})u$, $u\in W_0^{1,p'}(\mathbb{B};\hc)$, it belongs to $\mathfrak{V}_A^{p'}(\mathbb{B};\hc)$  if
\[
0=(\mathbf{D}-M^{\vec{\alpha}})(\mathbf{D}+M^{\vec{\alpha}})u= \mathbf{D}^2u+(\mathbf{D}u)\vec{\alpha}-(\mathbf{D}u)\vec{\alpha}-u\vec{\alpha}\vec{\alpha}=-\bigtriangleup u-\lambda u.
\]
Let $\sqrt{\lambda}$ be equal to the first positive root of the Bessel function $J_{\frac{1}{2}}(z)$, and let   $u_{\lambda}(x)=\frac{J_{\frac{1}{2}}\left(\sqrt{\lambda}|x|\right)}{\sqrt{|x|}}$. According to \cite[Ch. 10]{abramowitz}, this function is entire in $r=|x|$, hence $u_{\lambda}\in W_0^{1,p'}(\mathbb{B})\cap W^{2,p'}(\mathbb{B})$,  and by \cite[pp. 344-345]{vladimirov}, $-\bigtriangleup u_{\lambda}=\lambda u_{\lambda}$. Consequently,  $w=(\mathbf{D}+M^{\vec{\alpha}})u_{\lambda}=\nabla u_{\lambda}+\operatorname{i}\sqrt{\lambda} u_{\lambda}e_1$ is a nontrivial function that belongs to $\mathfrak{V}_A^{p'}(\mathbb{B};\hc)\cap (\mathbf{D}+M^{\vec{\alpha}})W_0^{1,p'}(\mathbb{B};\hc)$.
\end{example}

\section{Conclusions}

The main properties of the Bergman space of weak $L_p$-solutions of a class of quaternionic Vekua equations have been established. The regularity of the solutions was analyzed, demonstrating their b-locally H\"older continuity. Furthermore, it was shown that the evaluation map is bounded with respect to the $L_p$-norm at each point in the domain. For the case $p=2$, the existence of a reproducing kernel and an explicit formula for the orthogonal projection were demonstrated. Additionally, the explicit form of the annihilator of the Bergman space for $1<p<\infty$ was computed, yielding an orthogonal decomposition for $L_2(G;\hc)$.

For $p\neq 2$, an interesting question is to compute the dual $(V_A^p(G;\hc))^{\star}$. A conjecture could be that $(V_A^p(G;\hc))^{\star}=V_A^{p'}(G;\hc)$, which is equivalent to showing that $L_{p'}(G;\hc)= V_A^{p'}(G;\hc)\oplus (V_A^p(G;\hc))^a$, or that the Bergman projection $\mathbf{P}_G^A$ can be extended continuously to $L_{p'}(G;\hc)$. However, this is not a trivial question, even for the case of monogenic functions $\mathfrak{M}^p(G;\hc)$ (see \cite{cofman,hedenmal} for the case of the classical analytic and harmonic Bergman space).

\section*{Acknowledgments}
The author thanks to  Instituto de Matem\'aticas de la U.N.A.M. (M\'exico), where this work was developed, and CONAHCYT for their support through the program {\it Estancias Posdoctorales por México Convocatoria 2023 (I)},

		
\end{document}